\theoremstyle{plain}
\newtheorem{theorem}{Theorem}[section]
\newtheorem{lemma}[theorem]{Lemma}
\theoremstyle{definition}
\newtheorem{definition}[theorem]{Definition}
\newcommand{\R}{\mathbb{R}}
\newcommand{\bx}{\bar{x}}
\newcommand{\bydef}{:=} 
\title{Rigorous numerics in Floquet theory: \\ computing stable and unstable bundles of periodic orbits}
\author{Roberto Castelli \thanks{{\bf Corresponding author}. BCAM - Basque Center for Applied Mathematics, Bizkaia Technology Park, 48160 Derio, Bizkaia, Spain.
Phone: (+34) 946 567 842. Fax: (+34) 946 567 843. Email: {\tt rcastelli@bcamath.org}.}
\and
\and Jean-Philippe Lessard\thanks {
BCAM - Basque Center for Applied Mathematics, Bizkaia Technology Park, 48160 Derio, Bizkaia, Spain
and Universit\'e Laval, D\'epartement de Math\'ematiques et de Statistique, Pavillon Alexandre-Vachon, 1045 avenue de la M\'edecine, Local 1056, Qu\'ebec, (Qu\'ebec), G1V 0A6, Canada.
Email: {\tt lessard@bcamath.org}.
}}
\date{}
\begin{document}

\maketitle

\begin{abstract}
In this paper, a new rigorous numerical method to compute fundamental matrix solutions of non-autonomous linear differential equations with periodic coefficients is introduced. 
Decomposing the fundamental matrix solutions $\Phi(t)$ by their Floquet normal forms, that is as product of real periodic and exponential matrices $\Phi(t)=Q(t)e^{Rt}$, one solves 
simultaneously for $R$ and for the Fourier coefficients of $Q$ via a fixed point argument in a suitable Banach space of rapidly decaying coefficients. As an application, the method 
is used to compute rigorously stable and unstable bundles of periodic orbits of vector fields.  Examples are given in the context of the Lorenz equations and the $\zeta^3$-model.
\end{abstract}

\begin{center}
{\bf \small Keywords} \\ \vspace{.05cm}
{ \small Rigorous numerics $\cdot$ Floquet theory $\cdot$ Fundamental matrix solutions $\cdot$ \\ Contraction mapping theorem $\cdot$ Periodic orbits $\cdot$ Tangent bundles }
\end{center}

\begin{center}
{\bf \small Mathematics Subject Classification (2000)} \\ \vspace{.05cm}
{ \small 37B55 $\cdot$ 37M99 $\cdot$ 37C27 $\cdot$ 65G99 $\cdot$ 34D05 }
\end{center}

\section{Introduction}
\label{sec:Intro}

In his seminal work \cite{MR1508722} of 1883, Gaston Floquet studied linear non-autonomous differential equations of the form
\begin{equation}\label{eq:syst-A}
\dot y=A(t)y ,
\end{equation}
where $A(t)$ is a $\tau$-periodic continuous matrix function of $t$. The main result of \cite{MR1508722} is now presented, and its proof can be found for instance in \cite{MR2224508}. 
\begin{theorem}{\em \bf [Floquet, 1883]} 
\label{Th-Floquet}
Let $A(t)$ be a $\tau$-periodic continuous matrix function and denote by $\Phi(t)$ a fundamental matrix solution of \eqref{eq:syst-A}. 
Then $\Phi(t+\tau)$ is also a fundamental matrix solution, $\Phi(t+\tau)=\Phi(t)\Phi^{-1}(0)\Phi(\tau)$, and there exist a real constant matrix $R$ and a real nonsingular, continuously differentiable, $2\tau$-periodic matrix function $Q(t)$ such that
\begin{equation} \label{eq:Floquet_normal_form}
\Phi(t)=Q(t) e^{Rt}.
\end{equation}
\end{theorem}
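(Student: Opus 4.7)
The plan is to break the statement into three pieces and handle them in the natural logical order: first show $\Phi(t+\tau)$ solves the same matrix ODE, then derive the factorization $\Phi(t+\tau) = \Phi(t)\Phi^{-1}(0)\Phi(\tau)$, and finally construct $R$ and $Q$ producing the Floquet normal form.

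For the first piece I would set $\Psi(t) \bydef \Phi(t+\tau)$ and differentiate, using $\tau$-periodicity of $A$:
\begin{equation*}
\dot\Psi(t) = \dot\Phi(t+\tau) = A(t+\tau)\Phi(t+\tau) = A(t)\Psi(t).
\end{equation*}
Thus $\Psi$ is a matrix solution of \eqref{eq:syst-A}, and since $\Phi(0)$ is invertible so is $\Psi(0) = \Phi(\tau)$ (by Liouville's/Abel's formula the determinant never vanishes), so $\Psi$ is itself a fundamental matrix solution. The space of fundamental matrix solutions is a right $GL_n(\R)$-torsor over $\Phi$, meaning $\Psi(t) = \Phi(t) C$ for a unique invertible constant matrix $C$; evaluating at $t=0$ gives $C = \Phi^{-1}(0)\Phi(\tau)$, and this is the second claim of the theorem. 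Denote this \emph{monodromy} matrix by $M \bydef \Phi^{-1}(0)\Phi(\tau)$, so that $\Phi(t+\tau) = \Phi(t) M$.

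For the factorization $\Phi(t) = Q(t)e^{Rt}$, the idea is to find a real matrix $R$ with $e^{2\tau R} = M^2$ and then simply \emph{define} $Q(t) \bydef \Phi(t)e^{-Rt}$. Granting such an $R$, periodicity follows from iterating the monodromy relation: $\Phi(t+2\tau) = \Phi(t+\tau)M = \Phi(t)M^2 = \Phi(t)e^{2\tau R}$, whence
\begin{equation*}
Q(t+2\tau) = \Phi(t+2\tau)e^{-R(t+2\tau)} = \Phi(t)e^{2\tau R}e^{-R(t+2\tau)} = \Phi(t)e^{-Rt} = Q(t).
\end{equation*}
Continuous differentiability and nonsingularity of $Q$ are inherited from $\Phi$ and $e^{-Rt}$, both of which are $C^1$ and invertible at every $t$.

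The one genuinely nontrivial step, and the main obstacle, is the existence of a \emph{real} logarithm for $M^2$. A real invertible matrix need not have a real logarithm (for instance any real matrix with an odd-dimensional Jordan block at a negative eigenvalue fails), which is precisely why the theorem provides a $2\tau$-periodic $Q$ rather than a $\tau$-periodic one. The standard remedy is to pass to $M^2$: one uses the real Jordan form to write $M$ as a block-diagonal matrix whose blocks are either (i) Jordan blocks at positive eigenvalues, (ii) $2\times 2$ real blocks arising from complex conjugate pairs $\rho e^{\pm i\theta}$ with $\rho>0$, or (iii) Jordan blocks at negative eigenvalues. Squaring sends (iii) to blocks at positive eigenvalues (possibly grouped in pairs), and in all three cases the resulting block admits a real logarithm via the convergent series $\log(I+N) = \sum_{k\ge 1}(-1)^{k+1}N^k/k$ applied after factoring out a scalar (or $2\times 2$ rotational) part whose logarithm is explicit. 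Setting $R \bydef (2\tau)^{-1}\log(M^2)$ closes the argument; the remainder is bookkeeping that I would relegate to a reference such as \cite{MR2224508}.
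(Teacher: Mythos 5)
Your proof is correct and follows the standard argument; the paper itself does not prove Theorem~\ref{Th-Floquet} but defers to \cite{MR2224508}, where essentially the same derivation appears. You correctly isolate the one genuinely subtle step---the existence of a \emph{real} logarithm of $M^2$---and correctly explain why squaring the monodromy matrix and accepting a $2\tau$-periodic $Q$ removes the obstruction posed by negative eigenvalues of $M$.
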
  
Decomposition \eqref{eq:Floquet_normal_form} is called a {\em Floquet normal form} for the fundamental matrix solution $\Phi(t)$. The real time-dependent change of coordinates $z = Q^{-1}(t)y$ transforms system \eqref{eq:syst-A} into a linear constant coefficients system of the form $\dot z= R z$. A stability theorem demonstrates that the stability of the zero solution of \eqref{eq:syst-A} can be determined by the eigenvalues of the so-called {\em monodromy matrix} $\Phi(\tau)$. As mentioned in \cite{MR2224508}, while the stability theorem is very elegant, in applied problems it is usually impossible to compute the eigenvalues of the monodromy matrix. An even more challenging and central problem is the computation of the fundamental matrix solutions. The goal of the present work is to address this major difficulty by introducing a new rigorous numerical method to compute explicitly Floquet normal forms as in \eqref{eq:Floquet_normal_form}, hence providing a direct way to obtain fundamental matrix solutions of \eqref{eq:syst-A}.  Before proceeding with a general presentation of the rigorous computational method, let us introduce some motivations. 

First of all, we are not aware of any method to construct rigorously Floquet normal forms as introduced in Theorem~\ref{Th-Floquet}. Since this fundamental decomposition was introduced more than 125 years ago, we believe that developing a rigorous computational method leading to an explicit construction of Floquet normal forms is an important contribution to the field of differential equations. 

The second motivation is directly linked to the study of dynamical systems. Indeed, equations of the form \eqref{eq:syst-A} arise naturally when studying stability properties of time-periodic solutions of differential equations $\dot y=g(y)$, where $g:\R^n \rightarrow \R^n$ is a smooth map. Assume that $\Gamma$ is a $\tau$-periodic orbit of $\dot y=g(y)$ parameterized by $\gamma(t) \in \R^n$ ($t \in [0,\tau]$), and define the $\tau$-periodic matrix function $A(t)=\nabla g(\gamma(t))$, where $\nabla g$ is the Jacobian matrix. Consider $\Phi(t)$ the {\em principal} fundamental matrix solution of $\dot y=A(t)y=\nabla g(\gamma(t))y$, that is the unique fundamental matrix solution so that $\Phi(0)=I$, and assume that a Floquet normal form $\Phi(t)=Q(t)e^{Rt}$ has been computed. Theorem~\ref{theorem:bundle} shows how the information from the Floquet normal form can directly be used to compute important dynamical properties of $\Gamma$. More explicitly, it is demonstrated that the stability of the periodic orbit $\Gamma$ can be determined by the eigenvalues of $R$ while the stable and unstable tangent bundles of $\Gamma$ can be retrieved from the action of $Q(t)$ (with $t \in [0,\tau]$) on the eigenvectors of $R$. 

A final motivation comes from the fact that computing stable and unstable bundles of periodic orbits is an important step toward computing rigorous parameterization of invariant manifolds of periodic orbits. In fact, one of our future goal consists of combining the ideas of \cite{MR2177465} to rigorously parameterize invariant manifolds of periodic orbits, and then to use that information to solve rigorously, following similar ideas than the ones presented in \cite{BLMM}, a projected boundary value problem whose solutions would correspond to cycle-to-cycle connections and to point-to-cycle connections. Note that the approach of using projected boundary value problems to compute (non rigorously) cycle-to-cycle connections and to point-to-cycle connections has been adopted by several authors (e.g. see \cite{MR2057361}, \cite{MR2454068}, \cite{MR2511084}).

Let us now introduce the ideas behind the rigorous method to compute Floquet normal forms. Rather than jumping immediately into a deep mathematical description of the method, we present the general ideas and we refer to Section~\ref{sec:rig_comp} for a more detailed presentation. 

The first step is to substitute the Floquet normal form $\Phi(t)=Q(t)e^{Rt}$ in the differential equation \eqref{eq:syst-A}. From this, it follows that $(R,Q(t))$ is a solution of the differential equation with periodic coefficients $\dot Q=A(t)Q-QR$. On the converse, if a real constant matrix $R$ and a $2\tau$-periodic matrix function $Q(t)$ solve  
\begin{equation} \label{eq:ivp}
\left\{\begin{array}{l}
\dot Q=A(t)Q-QR\\
Q(0)=I,
\end{array}\right.
\end{equation}
then the matrix function $\Phi(t) := Q(t)e^{Rt}$ is the principal fundamental solution of \eqref{eq:syst-A}. Therefore, the problem of computing fundamental matrix solutions in the form $\Phi(t) = Q(t)e^{Rt}$ reduces to find $(R,Q(t))$ satisfying \eqref{eq:ivp}. The next step is to introduce a nonlinear operator $f$ (see Section~\ref{sec:set_up} for details) whose zeros are in one-to-one correspondence with the solutions of \eqref{eq:ivp}. Letting $x=(R,Q_0,Q_1,Q_2,\dots)$, where the $Q_k$'s are the Fourier coefficients of $Q(t)$, the problem of computing Floquet normal forms $\Phi(t) = Q(t)e^{Rt}$ is then equivalent to find $x$ such that $f(x)=0$. By the a priori knowledge of the smoothness of $Q(t)$, the Fourier coefficients $Q_k$'s decay fast, meaning that the solutions of $f(x)=0$ live in a suitable Banach space $\Omega^s$ of rapidly decaying coefficients. To prove existence, in a constructive way, of solutions of the infinite dimensional nonlinear operator equation $f(x)=0$, we use {\em rigorous numerics}. To be more precise, the goal of rigorous numerics is to construct algorithms that provide an approximate solution to the problem together with precise bounds within which the exact solution is guaranteed to exist in the mathematically rigorous sense. It is worth mentioning that by now, the use of rigorous numerical methods is a standard approach to study differential equations and dynamical systems (e.g. see \cite{MR2534406}, \cite{MR2591840}, \cite{MR2594444}, \cite{MR1838755}, \cite{MR2049869}, \cite{MR1639986}, \cite{MR1276767}, \cite{MR1701385}, \cite{MR2443030}, \cite{MR1870856}).

Based on the previous discussion, the next step consists of computing a numerical approximation $\bx$ of $f(x)=0$ and to demonstrate that close to $\bx$, there exists a genuine solution $x^*$ of $f(x)=0$, corresponding to the wanted explicit Floquet normal form of the principal fundamental matrix solution $\Phi(t)$ of \eqref{eq:syst-A}. However, since the operator $f$ is infinite dimensional, a finite dimensional approximation of $f$ must be introduced in order to compute an approximation $\bx$. This is done in Section~\ref{sec:finite_dim_reduction}. Once $\bx$ is computed, a Newton-like operator $T:\Omega^s \rightarrow \Omega^s$ defined by $T(x)=x-Af(x)$ is introduced, where $A$ is an injective linear operator which acts as an approximation for $Df(\bx)^{-1}$. Since $A$ is injective, the fixed points of $T$ and the zeros of $f$ are in one-to-one correspondence. The next step is to consider small balls $B_{\bx}(r) \subset \Omega^s$ centered at the numerical approximation $\bx$, and to solve for $r$ for which $T:B_{\bx}(r) \rightarrow B_{\bx}(r)$ is a contraction (see Section~\ref{sec:T}). The rigorous verification that $T$ is a contraction on $B_{\bx}(r)$ is done via the use of the so-called {\em radii polynomials} which provide, in the context of differential equations, an efficient means of determining a domain on which the contraction mapping theorem is applicable. The notion of the radii polynomials was originally introduced in \cite{MR2338393} and \cite{MR2487806} to prove existence of equilibria of PDEs. It was later on adapted to prove existence of equilibria of high-dimensional PDEs (e.g. see \cite{MR2718657}, \cite{secondary_bif}, \cite{CH_smooth}, \cite{GL1}), of periodic orbits of delay equations and PDEs (e.g. see \cite{KL}, \cite{MR2592879}, \cite{time_periodic_PDEs}, \cite{MR2630003}) and connecting orbits of ODEs \cite{BLMM}. We refer to \cite{CGL} for a more extensive and general exposure of the radii polynomials.

In this work, we present a general formulation of the radii polynomials adapted to the context of computing rigorously Floquet normal forms 
(see Section~\ref{sec:radii_polynomials} for more details). We present the explicit bounds in Section~\ref{sec:construction} that lead directly to their construction. Note that these bounds ensure that the truncation error terms, inevitably introduced by computing on a finite dimensional projection, are controlled. It is also important to mention that in the computation of the bounds, the floating point errors are controlled by using interval arithmetic \cite{MR0231516}. In fact, all rigorous computations were performed in {\em Matlab} with the interval arithmetic package {\em Intlab} \cite{Ru99a}.

The paper is organized as follows. In Section~\ref{sec:rig_comp}, we introduce the rigorous numerical method to compute Floquet normal forms $\Phi(t)=Q(t)e^{Rt}$ of fundamental matrix solutions of systems of the form \eqref{eq:syst-A}. In Section~\ref{sec:computing_bundles}, we demonstrate how to use the information from Floquet normal forms to compute stable and unstable bundles of periodic orbits of vector field and how to determine the stability properties of periodic orbits. The main result of this section is Theorem~\ref{theorem:bundle}. Finally in Section~\ref{sec:applications}, we present some applications, where we construct rigorously tangent stable and unstable bundles of some periodic orbits of the Lorenz equations and of the $\zeta^3$-model. 

\section{Rigorous computation of Floquet normal forms} \label{sec:rig_comp}

In this section, we introduce the rigorous numerical method to compute Floquet normal forms $\Phi(t)=Q(t)e^{Rt}$ of fundamental matrix solutions of systems of the form \eqref{eq:syst-A}.
%
As already mentioned in Section~\ref{sec:Intro}, the first step is to introduce a nonlinear operator $f$ whose zeros are in one-to-one correspondence with the solutions of \eqref{eq:ivp}.

\subsection{Set-up of the operator equation \boldmath $f(x)=0$ \unboldmath}
\label{sec:set_up}

In the following $Mat(n,\mathbb R), Mat(n,\mathbb C)$ denote the space of $n\times n$ matrices respectively with real and complex entries. The assumption on $Q(t)$ to be real and $2\tau$-periodic allows to consider the expansion
\begin{equation}\label{eq:Q(t)}
Q(t)=Q_{0}+\sum_{k\in\mathbb Z\setminus\{0\}}\left(Q_{k,1}+iQ_{k,2}\right)e^{ik\tfrac{2\pi}{2\tau}t}, 
\end{equation}
where the Fourier coefficients $Q_{0},Q_{k,i}\in Mat(n,\mathbb R)$  satisfy $Q_{-k,1}=Q_{k,1}$ and $Q_{-k,2}=-Q_{k,2}$ for any $k\geq 1$. 
Being $\tau$-periodic, the matrix-valued function $A(t)$ is also $2\tau$-periodic, thus it makes sense to consider the expansion
\begin{equation}\label{eq:A(t)}
A(t)=\sum_{k\in \mathbb Z}\mathcal A_{k}e^{ik\tfrac{2\pi}{2\tau}t},
\end{equation}
where $\mathcal A_{0}\in Mat(n,\mathbb R)$, while the matrices $\mathcal A_{k}\in Mat(n,\mathbb C)$ satisfy $\mathcal A_{-k}=\mathcal C({\mathcal A}_{k})$, for any $k\geq 1$. Here $\mathcal C({\mathcal A})$ stands for the matrix whose entries are the complex conjugates of the entries of $\mathcal A$.  It has to be remarked that the assumption for $A$ to be $\tau$-periodic implies that $\mathcal A_{k}=0$ for $k$ odd and $\mathcal A_{2l}=\hat{\mathcal A}_{l}$ where $\hat{\mathcal A}_{l}$ is the $l$-th Fourier coefficient of $\mathcal A(t)$ in the basis $\{ e^{ik\tfrac{2\pi}{T}t}\}_{k}$.

After substituting the expansions \eqref{eq:Q(t)} and \eqref{eq:A(t)} in problem \eqref{eq:ivp}, the latter system of ODEs moves into an equation $F(t)=0$, where $F(t)$ is a $2\tau$-periodic matrix function.  By a subsequent projection of $F(t)$ in  the Fourier basis $\{ e^{i k \tfrac{2\pi}{2\tau}t}\}$, it follows that solving \eqref{eq:ivp} is equivalent to solve for the unknowns
$$
R,Q_{0} \in Mat(n,\mathbb R) ~~{\rm and} ~~ Q_{k}\bydef \left( Q_{k,1}, Q_{k,2} \right)
\in Mat(n,\mathbb R)^2
$$
 the infinite dimensional algebraic system 
\begin{equation}\label{eq:four}
f(R,Q_0,\dots,Q_k,\dots)=0\\
\end{equation}
$$
f=(f_{\star}, f_{0}, f_{1},\dots , f_{k},\dots)
$$
defined by
\begin{equation}\label{eq:fk}
  \begin{array}{l}
\displaystyle{  f_{\star}\bydef Q_{0}+ 2\sum_{k\geq 1}Q_{k,1} - I  } \\ 
  \\
  f_{0}\bydef Q_{0}R-(A\cdot Q)_0 \\
  \\ 
  f_k\bydef \left[\begin{array}{l}
f_{k,1}\\
\noalign{\vskip 5pt} f_{k,2}
\end{array}  \right]=\left[\begin{array}{l}
-k\frac{2\pi}{2\tau}Q_{k,2}+Q_{k,1} R -(A\cdot Q)_{k,1}\\
\noalign{\vskip 5pt} \ \ \,k\frac{2\pi}{2\tau}Q_{k,1}+Q_{k,2} R -(A\cdot Q)_{k,2}
\end{array}  \right] ,\quad k\geq 1 \end{array}
 \end{equation}
where $(A\cdot Q)_{k,1}, (A\cdot Q)_{k,2}$ denote respectively the real and imaginary part of the convolution  
$$
(A\cdot Q)_{k}\bydef\sum_{k_1+k_2=k}\mathcal A_{k_1} (Q_{k_2,1}+iQ_{k_2,2}) .
$$
Note that $f_{\star}, f_0 \in Mat(n,\mathbb R)$ and $f_k \in Mat(n,\mathbb R)^2$ for every $k \ge 1$.
 

The problem \eqref{eq:four} consists of:  {\em i)} a system of $n^{2}$ real scalar equations for $f_{\star}=0$ representing the initial condition $Q(0)=I$; {\em ii)} $n^{2}$ real scalar equations for system $f_0=0$ that reproduces $<F(t),1>=0$;  {\em iii)} $2n^{2}$ real scalar equations for each $f_k=0$ ($k\geq 1$).  Note that $f_{k,1},f_{k,2}$ are the real and complex part of the equation $<F(t),e^{ik\tfrac{2\pi}{2\tau}t}>=ik\frac{2\pi}{2\tau}Q_k+Q_k R -(\mathcal A\cdot Q)_k$. Here, $<\cdot,\cdot>$ represents the inner product in $L^2\left( [0,\frac{2 \pi}{2 \tau }] \right)$.

Before proceeding with the analysis of the system $f=0$ given by \eqref{eq:four},
 let us introduce some notation that will be adopted throughout the paper.
\vskip 5pt
{\it Notation}

Let  $A,B$ be  matrices with entries $A=\{a_{i,j}\}$, $B=\{b_{i,j}\}$ and $\mathcal{A}=(A_{1},\dots,A_{n}), \\ \mathcal{B}=(B_{1},\dots,B_{n})$ be vectors of matrices. Denote by
\begin{itemize}
\item[i)] $|A|=\{|a_{i,j}|\}$ the matrix of absolute values, where $|\cdot |$ denotes both the real and complex absolute value, according with $a_{i,j}$. For vectors $|\mathcal A|=(|A_{1}|,\dots,|A_{n}|)$; \\
$|A|_{\infty}=\max_{i,j}\{|a_{i,j}|\}$ and $|\mathcal A|_{\infty}=\max\{|A_{1}|_{\infty},\dots,|A_{n}|_{\infty} \}$
\item[ii)] $A\leq_{cw} B$ means $a_{i,j}\leq b_{i,j}$ for any $i,j$. In case $b$ is a scalar, $A\leq_{cw}b$ means $a_{i,j}\leq b$. In case of vectors $\mathcal A\leq_{cw}\mathcal B$ and $\mathcal A\leq_{cw} b$ extends as $A_{k}\leq_{cw} B_{k}$ and $A_{k}\leq_{cw} b$, for any $k=1\dots n$. The same for $\geq_{cw}, >_{cw}, <_{cw}$;
\item[iii)] $\| A\|_{\infty} $ is the standard infinity norm of a matrix: $\|A\|_{\infty}=\max_{i}\sum_{j}|a_{i,j}|$;
\item[iv)] $I$ denotes the identity $n\times n$ matrix, $\mathds 1_{n}$ is the $n\times n$ matrix whose entries are all $1$.
\end{itemize}

Coming back to the analysis of system \eqref{eq:four} let us define the space  
$$
X=\left\{x=(x_{0},x_{1},\dots, x_{k},\dots):  \begin{array}{l}
x_{0}=(R,Q_{0})\in Mat(n,\mathbb R)^{2}\\
\noalign{\vskip 2pt}x_{k}=Q_{k}=(Q_{k,1},Q_{k,2})\in Mat(n,\mathbb R)^{2},k\geq 1
\end{array}\right\}.
$$
Note that 
$f:X\rightarrow X$.
Later on the problem of solving $f=0$ will be transformed into a fixed point problem for an operator $T$:  that requires the choice of a suitable Banach subspace of $X$ where to investigate the existence of solutions.   
To define the proper Banach space, let us first introduce 
   the weigh function
\begin{equation}
w_k=\left\{\begin{array}{ll}
|k|\quad &k\neq 0\\
1  &k=0
\end{array}
\right. 
\end{equation}
 and given $x= (R,Q_{0},Q_{1,1},Q_{1,2},\dots,Q_{k,1},Q_{k,2},\dots) \in X$, let us define the $s$-norm of $x$ in $X$ by
 $$
 \|x\|_{s}\bydef\sup_{k\geq 0}\{|x_{k}|_{\infty}w_{k}^{s} \}=\sup\Big\{ |R|_{\infty},|Q_{0}|_{\infty},\sup_{k\geq 1}\{|Q_{k,1}|_{\infty}w_{k}^{s},|Q_{k,2}|_{\infty}w_{k}^{s}\}\Big\} .
 $$
According with the $s$-norm, let us define the space $\Omega^s$  of sequences in $X$ with algebraically decaying tails 
\begin{equation} \label{eq:Omega_s}
\Omega^s=\{x\in X : \| x\|_{s}<\infty\} .
\end{equation}
For any $s>0$ the space $\Omega^s$ endowed with the $s$-norm is a Banach space and the inclusion $\Omega^s\supset \Omega^{s+1}$ holds. 
The introduction of $\Omega^s$ is motivated by the fact that  a periodic  solution $Q(t)$ of system  \eqref{eq:ivp} results to be at least as smooth as $A(t)$. Thus, in case  the function $A(t)$ is analytic, it follows that $Q(t)$ is analytic. As a consequence  the  Fourier coefficients of $Q(t)$  decay faster than any power rate and therefore they live in $\Omega^s$ for any $s$. On the other hand, even a weaker assumption of the function $A(t)$, such as a $|\mathcal A_{k}|_{\infty}<Cw_{k}^{-s}$ for a constant $C$ and positive $s$, allows to conclude that the solution $x\in \Omega^s$.  The latter is the case we are mainly interested in. Indeed, for the sake of generality and to emphasize  the robustness and versatility of the technique, one assumes  the weakest assumption on $\mathcal A_{k}$ that makes the computational method applicable.   Such assumption is  that there exists $s\geq 2$ and a constant $C>0$ such that the coefficient $\mathcal A_{k}$ satisfy $|\mathcal A_{k}|_{\infty}<Cw_{k}^{-s}$. This condition implies that an integrable function $A(t)$ with expansion as in   \eqref{eq:A(t)} is differentiable up to order $s-1$.



Denote with $\mathcal A=\{\mathcal A_{k}\}_{k\geq 0}$ the sequence of the Fourier coefficients appearing in \eqref{eq:A(t)} and, as an extension of the $s$-norm, define
\begin{equation} \label{eq:calA_norm}
\|\mathcal A\|_{s}=\sup_{k\geq 0}\{|\mathcal A_{k}|_{\infty}w_{k}^{s}\}.
\end{equation}

\begin{lemma}\label{lemma:f}
Assume $\|\mathcal A\|_{s^{\star}}<\infty$  for $s^{\star}\geq 2$. Then $f$ maps $\Omega^s$ in $\Omega^{s-1}$, for any $2\leq s\leq s^{\star}$.
\end{lemma}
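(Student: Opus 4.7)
The goal is to verify that $\|f(x)\|_{s-1}$ is finite for every $x \in \Omega^s$. I would start by unpacking the definitions: any $x \in \Omega^s$ satisfies the componentwise decay $|x_k|_\infty \leq \|x\|_s\,w_k^{-s}$, and the assumption $s \leq s^\star$ gives $|\mathcal A_k|_\infty \leq \|\mathcal A\|_{s^\star}\,w_k^{-s^\star} \leq \|\mathcal A\|_{s^\star}\,w_k^{-s}$ since $w_k \geq 1$. The task then reduces to bounding the three ``zero--mode'' quantities $|f_\star|_\infty$ and $|f_0|_\infty$, together with the weighted tail $\sup_{k\geq 1}\max\{|f_{k,1}|_\infty,\,|f_{k,2}|_\infty\}\,w_k^{s-1}$.

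The component $f_\star$ is handled immediately: since $s \geq 2$, the tail sum $\sum_{k\geq 1}|Q_{k,1}|_\infty$ is dominated by $\|x\|_s \sum_{k\geq 1}k^{-s} < \infty$, so $|f_\star|_\infty \leq |Q_0|_\infty + 2\|x\|_s\,\zeta(s) + 1$. For $k \geq 1$, the formula $f_{k,j} = \pm k\tfrac{\pi}{\tau}Q_{k,\,3-j} + Q_{k,j}R - (A\cdot Q)_{k,j}$ splits into three contributions. The derivative-like term is the tightest one: $|k\,Q_{k,\,3-j}|_\infty\,w_k^{s-1} = |Q_{k,\,3-j}|_\infty\,w_k^s \leq \|x\|_s$, which is precisely what forces the conclusion to land in $\Omega^{s-1}$ rather than $\Omega^s$. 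The product $Q_{k,j}R$ is controlled via the elementary matrix-entry inequality $|AB|_\infty \leq n|A|_\infty|B|_\infty$, which gives $|Q_{k,j}R|_\infty \leq n\|x\|_s^{2}\,w_k^{-s}$, a stronger decay than required.

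The main obstacle is the convolution term $(A\cdot Q)_k$. The key technical step is the standard discrete convolution estimate: for $s \geq 2$ there exists $\alpha_s > 0$ with
\begin{equation*}
\sum_{k_1+k_2=k}\frac{1}{w_{k_1}^{s}\,w_{k_2}^{s}} \;\leq\; \frac{\alpha_s}{w_k^{s}},\qquad k\in\mathbb Z.
\end{equation*}
This is proved by splitting the range of summation according to whether $|k_1|\leq |k|/2$ (so that $w_{k_2} \geq |k|/2$) or $|k_1| > |k|/2$, and using that $s \geq 2$ makes $\sum_{k\in\mathbb Z} w_k^{-s}$ finite. Combined with the pointwise bounds on $\mathcal A_k$ and $Q_k$, this yields $|(A\cdot Q)_k|_\infty \leq n\,\|\mathcal A\|_{s^\star}\,\|x\|_s\,\alpha_s\,w_k^{-s}$ for all $k$, which is again stronger than the rate $w_k^{-(s-1)}$ that is needed. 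The same estimate at $k=0$, together with $|Q_0 R|_\infty \leq n\|x\|_s^2$, controls the 0-mode component $f_0$.

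Assembling the three types of bounds and taking the supremum in $k$ produces an explicit polynomial majorant for $\|f(x)\|_{s-1}$ in terms of $\|x\|_s$, $\|\mathcal A\|_{s^\star}$, $n$, $\zeta(s)$, and $\alpha_s$, all finite. I expect the convolution estimate to be the only nontrivial step, and the hypothesis $s \geq 2$ is used there and in the series for $f_\star$, while $s \leq s^\star$ is used only to transfer the decay of $\mathcal A$ from exponent $s^\star$ to exponent $s$. The loss of one power of $w_k$ comes entirely from the Fourier multiplier $k$ in the derivative terms of $f_k$; everything else would give a bound in $\Omega^s$.
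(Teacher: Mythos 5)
Your proof is correct and follows essentially the same approach as the paper: both reduce the problem to the convolution estimate $\sum_{k_1+k_2=k} w_{k_1}^{-s}w_{k_2}^{-s}\leq \alpha_s w_k^{-s}$ (which the paper cites as Lemma~2.1 of an earlier reference while you sketch a proof), and both identify the Fourier multiplier $k$ in $f_k$ as the sole source of the drop from $\Omega^s$ to $\Omega^{s-1}$. Your version is somewhat more explicit — you track $f_\star$ and $f_0$ separately and spell out the matrix-product bound — but the decomposition of $f_k$ into derivative, $Q_kR$, and convolution pieces, and the use of $s\le s^\star$ only to transfer the decay of $\mathcal A$, mirror the paper's argument precisely.
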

\begin{proof} Let $2\leq s\leq s^{\star}$ and suppose $x\in \Omega^s$ . Then  $|\mathcal A_{k}|_{\infty}<C_{1}w_{k}^{-s}$ and, from Lemma 2.1 in \cite{MR2718657},  $|(\mathcal A \cdot Q)_{k}|_{\infty}\leq \frac{C_{2}}{w_{k}^{s}}$ . Thus $|f_{k}(x)|_{\infty}\leq C_{3}k|Q_{k}|_{\infty}+C_{4}|Q_{k}|_{\infty}+C_{2}w_{k}^{-s}<Cw_{k}^{-s+1}$, for suitable constants $C, C_{i}$. This shows that $f(x)\in\Omega^{s-1}$.
\end{proof}

Thus we will look for solutions of the system \eqref{eq:four} within the space $\Omega^s$ for some $s\geq2$. The idea is to reformulate the zero finding problem $f(x)=0$ as a fixed point problem for a suitable operator $T$ defined in $\Omega^s$ and to verify the hypothesis of the contraction mapping theorem in order to conclude about the existence of a fixed point. More explicitly, the idea is to prove the existence of a ball $B_{\bar x}(r)$ in $\Omega^{s}$ around a finite dimensional approximate solution $\bar x$ on which the operator $T$ is a contraction. The proof will follow by verifying a finite number of polynomial inequalities: the so-called {\em radii polynomials}. Their computation will result from rigorous numerical computations and analytic estimates. The next step is to compute a finite dimensional approximate solution $\bar x$. For this, one needs to introduce a finite dimensional projection of $f(x)=0$ given by \eqref{eq:four}.

\subsection{Finite dimensional projection} \label{sec:finite_dim_reduction}
As mentioned earlier, the fist step involved in the computational method is to consider a finite dimensional projection 
and to compute an approximate numerical solution of \eqref{eq:four}. 

For $m>1$ consider the finite dimensional space $X^{m}=\prod_{k=1}^{m}Mat(n,\mathbb R)^{2}$ and define the projections 
$$
\begin{array}{rl}
\Pi_{m}:X&\rightarrow X^{m}\\
x&\mapsto \Pi_{m}(x)=x^{m}=(R,Q_{0},\dots,Q_{m-1})\\
\\
\Pi_{\infty}:x&\mapsto (Q_{m},Q_{m+1},\dots)
\end{array}
$$
so that $x=(x^{m},\Pi_{\infty}(x))$. Denote with $0^{\infty}\bydef\Pi_{\infty}(0)$. Moreover let us define the restricted map
\begin{equation} \label{eq:finite_dim_reduction}
\begin{array}{rl}
f^{(m)}: X^{m}&\rightarrow X^{m}\\
x^{m}&\mapsto \Pi_{m}f(x^{m},0^{\infty})
\end{array}
\end{equation}
Note that for any $x\in X$ the sequence $(x^{m},0^{\infty})\in X$ and the finite dimensional projection $\Pi_{m}$ applied to $f(x)$ reads as $\Pi_{m}f(x)=(f_{\star},f_{0},\dots,f_{m-1})(x)$. Since $X^m$ is isomorphic to $\mathbb{R}^{m2n^2}$, one can think of $f^{(m)}:\mathbb{R}^{m2n^2} \rightarrow \mathbb{R}^{m2n^2}$.
Suppose that using a Newton-like iterative algorithm, one computed   
$$
\bar x=(\bar R,\overline{Q}_{0},\dots,\overline Q_{m-1})
$$
 an approximate zero of $f^{(m)}$, that is $f^{(m)}(\bar x)\approx 0$. For simplicity the same notation $\bar x$ is used to identify the above vector in $X^{m}$ and the sequence $(\bar x,0^{\infty})$ in $X$.  As already mentioned at the end of Section~\ref{sec:set_up}, the idea is to consider a ball $B_{\bar{x}}(r) \in \Omega^s$ centered at the approximate solution $\bar x$ and to show the existence of a contraction mapping $T$ acting on $B_{\bar{x}}(r)$. Hence, let us now introduce the fixed point operator $T$.

\subsection{The fixed point operator \boldmath $T(x)=x$ \unboldmath} \label{sec:T}

In this section, we first define an operator $T$ on $\Omega^{s}$ whose fixed points correspond to solutions of $f(x)=0$ and then, we introduce some computable conditions from which one can conclude about the existence of fixed point of $T$. To begin with, suppose to have chosen a representation of the matrices $Mat(n,\mathbb R)$ as vector in $\mathbb R^{n^{2}}$ and to have extended it to an isomorphism between the space of sequences of $N$ matrices $Mat(n,\mathbb R)$ to $\mathbb R^{Nn^{2}}$.  Note that $X^{m}$ is isomorphic to $\mathbb R^{m2n^{2}}$.

In the sequel, consider a vector $V = [v_{1},\dots,v_{N2n^{2}}] \in \mathbb R^{N2n^{2}}$. We denote by $V_{k} \in \mathbb{R}^{2n^2}$, $k=0,\dots, N-1$ the vector with $2n^{2}$ components $V_{k}=[v_{k2n^{2}+1},v_{k2n^{2}+2},\dots,v_{(k+1)2n^{2}}]$. The reason of this choice of notation is the following: suppose that $V$ is the vector representation of the sequence  $x=(R,Q_{0},Q_{1},\dots,Q_{N-1})\in X^{N}$ for a positive $N$, then $V_{k}$ represents the couple $(R,Q_{0})$ when $k=0$ and $Q_{k}=(Q_{k,1},Q_{k,2})$ for $k\geq 1$.  

 
 Denote by $Df^{(m)}(\bar x)$ the Jacobian of $f^{(m)}$ with respect to $x^m$ evaluated at $\bar x$, that is 
$$
Df^{(m)}\bydef Df^{(m)}(\bar x)=\frac{\partial( f_{\star},f_0,f_1,\dots, f_{m-1})}{\partial(R,Q_0,\dots, Q_{m-1})}(\bar x) \in Mat(2n^{2}m,\mathbb R) .
$$
For clarity and completeness, 
\begin{equation}
Df^{(m)}=\left[\begin{array}{cccc}
\begin{array}{l}
\frac{\partial f_{\star}}{\partial R} \quad \frac{\partial f_{\star}}{\partial Q_{0}}\\
\noalign{\vskip 2pt}\frac{\partial f_0}{\partial R} \quad \frac{\partial f_0}{\partial Q_{0}}
\end{array}
&
\begin{array}{l}
\frac{\partial f_{\star}}{\partial Q_{1,1} }\quad  \frac{\partial f_{\star}}{\partial Q_{1,2}}\\
\noalign{\vskip 2pt}\frac{\partial f_{0}}{\partial Q_{1,1} }\quad  \frac{\partial f_{0}}{\partial Q_{1,2}}
\end{array}
&
\begin{array}{l}
\dots
\end{array}
& 
\begin{array}{l}
 \frac{\partial f_{\star}}{\partial Q_{m-1,1} } \quad \frac{\partial f_{\star}}{\partial Q_{m-1,2}}\\
\noalign{\vskip 2pt}   \frac{\partial f_{0}}{\partial Q_{m-1,1} } \quad \frac{\partial f_{0}}{\partial Q_{m-1,2}}
\end{array}
\\
\noalign{\vskip 2pt}  \frac{\partial f_{1}}{\partial (R,Q_{0}) }&\frac{\partial f_{1}}{\partial Q_{1} }&\dots&\frac{\partial f_{1}}{\partial Q_{m-1} }\\
\noalign{\vskip 2pt}  \vdots & \vdots & \vdots & \vdots  \\
\noalign{\vskip 2pt}  \frac{\partial f_{m-1}}{\partial (R,Q_{0})  }&\frac{\partial f_{m-1}}{\partial Q_{1} }&\dots&\frac{\partial f_{m-1}}{\partial Q_{m-1} }
\end{array}
\right](\bar x)
\end{equation}
where for $k,j=1,\dots , m-1$
$$
 \frac{\partial f_{k}}{\partial (R,Q_{0})}=\left[\begin{array}{l}
  \frac{\partial f_{k,1}}{\partial R }\quad  \frac{\partial f_{k,1}}{\partial Q_{0} }\\
  \noalign{\vskip 2pt}  \frac{\partial f_{k,2}}{\partial R } \quad  \frac{\partial f_{k,2}}{\partial Q_{0} }
 \end{array}\right],\quad \frac{\partial f_{k}}{\partial Q_{j} }=\left[\begin{array}{l}
  \frac{\partial f_{k,1}}{\partial Q_{j,1} }\quad  \frac{\partial f_{k,1}}{\partial Q_{j,2} }\\
 \noalign{\vskip 2pt}   \frac{\partial f_{k,2}}{\partial Q_{j,1} } \quad  \frac{\partial f_{k,2}}{\partial Q_{j,2} }
 \end{array}\right],
$$
and each $ \frac{\partial f_{k,i}}{\partial Q_{j,l} }\in Mat(n^{2},\mathbb R)$ denotes the Jacobian matrix of the components of $f_{k,j}$ with respect to the components of $Q_{j,l}$.  

Suppose to have numerically computed $Df^{(m)}$ and denote by $A_m \in Mat(2n^{2}m,\mathbb R)$ an invertible numerical approximation of  $(Df^{(m)})^{-1}$
$$
 A_m\cdot Df^{(m)}\approx I
 $$
  and for $k\geq m$, define
$$
\Lambda_k\bydef \frac{\partial f_k}{\partial Q_k}(\bar x) \in Mat(2n^{2},\mathbb R).
$$

\begin{lemma}\label{lemma:Lambda_invertible}
Recall \eqref{eq:A(t)} and \eqref{eq:calA_norm}, and assume that $\|\mathcal A\|_{s}<\infty$ for some $s\geq 2$. Then there exist two constants $K$ and $C_\Lambda$  such that for any  $k\geq K$  the linear operator $\Lambda_{k}$ is invertible and $\| \Lambda_{k}^{-1}\|_{\infty}<\frac{C_\Lambda}{k}$. The constants $K$ and $C_\Lambda$ depend on $\|\mathcal A\|_{s}$, the period $\tau$ and $|\bar R|_{\infty}$.
\end{lemma}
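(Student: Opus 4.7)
The plan is to exploit the fact that the only part of $\Lambda_k$ that grows with $k$ is the frequency term $k\tfrac{\pi}{\tau}$ coming from the time derivative, while the remaining contribution from $Q_k\bar R-\mathcal A_0 Q_k$ is $k$-independent. First I would write down $\Lambda_k$ explicitly from the definition of $f_k$ in \eqref{eq:fk}. Only the $(Q_{k,1},Q_{k,2})$-dependent part of $f_k(\bar x)$ matters, namely
\begin{equation*}
\begin{pmatrix} -\tfrac{k\pi}{\tau}Q_{k,2}+Q_{k,1}\bar R-\mathcal A_0 Q_{k,1} \\[2pt] \phantom{-}\tfrac{k\pi}{\tau}Q_{k,1}+Q_{k,2}\bar R-\mathcal A_0 Q_{k,2}\end{pmatrix},
\end{equation*}
so, after vectorizing the matrices $Q_{k,i}$ in $\R^{n^2}$, one gets
\begin{equation*}
\Lambda_k \;=\; \tfrac{k\pi}{\tau}\,\mathcal J \;+\; \mathcal M,\qquad
\mathcal J=\begin{pmatrix}0 & -I_{n^2} \\ I_{n^2} & 0\end{pmatrix},\qquad
\mathcal M=\begin{pmatrix}M & 0\\ 0 & M\end{pmatrix},
\end{equation*}
where $M\in\mathrm{Mat}(n^2,\R)$ is the matrix representation of $\delta\mapsto \delta\bar R-\mathcal A_0\delta$. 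The key observation is that $\mathcal J$ is a fixed, $k$-independent orthogonal block matrix with $\mathcal J^{-1}=\mathcal J^T$ and $\|\mathcal J^{-1}\|_\infty=1$, while $\mathcal M$ depends only on $\bar R$ and $\mathcal A_0$.

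Next I would factor $\Lambda_k = \tfrac{k\pi}{\tau}\mathcal J\bigl(I+\tfrac{\tau}{k\pi}\mathcal J^{-1}\mathcal M\bigr)$ and invert the parenthesized term by a Neumann series once $k$ is large enough that $\tfrac{\tau}{k\pi}\|\mathcal J^{-1}\mathcal M\|_\infty<1$. Writing $\mu\bydef\|\mathcal M\|_\infty$, which is bounded in terms of $|\bar R|_\infty$ and $|\mathcal A_0|_\infty\le\|\mathcal A\|_s$ (via the standard comparison between $|\cdot|_\infty$ and $\|\cdot\|_\infty$ on $n\times n$ matrices), one then takes $K$ to be the smallest integer exceeding, say, $\tfrac{2\tau\mu}{\pi}$. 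For every $k\ge K$ the Neumann series converges and yields
\begin{equation*}
\|\Lambda_k^{-1}\|_\infty \;\le\; \tfrac{\tau}{k\pi}\cdot\tfrac{\|\mathcal J^{-1}\|_\infty}{1-\tfrac{\tau}{k\pi}\|\mathcal J^{-1}\mathcal M\|_\infty}\;\le\;\tfrac{C_\Lambda}{k},
\end{equation*}
with $C_\Lambda$ depending only on $\tau$, $|\bar R|_\infty$ and $\|\mathcal A\|_s$. Thus both $K$ and $C_\Lambda$ have the advertised dependence.

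The step that requires a little care, rather than a serious obstacle, is the bookkeeping between the componentwise norm $|\cdot|_\infty$ used on the entries $\mathcal A_k$ and $\bar R$ and the operator-type infinity norm $\|\cdot\|_\infty$ used on the $2n^2\times 2n^2$ matrix $\Lambda_k^{-1}$; this only costs factors of $n$ in the constants. Nothing in the argument uses the full strength of $\|\mathcal A\|_s<\infty$ beyond the boundedness of $\mathcal A_0$, so the hypothesis $s\ge 2$ is not essential here, but stating it in this form keeps the lemma consistent with the functional framework of $\Omega^s$ used throughout.
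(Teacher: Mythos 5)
Your overall strategy is correct and genuinely different from the paper's. You factor out the dominant frequency term and invert the remainder by a Neumann series, whereas the paper applies a row permutation to put the large entries $\pm k\tfrac{\pi}{\tau}$ on the diagonal and then invokes a bound of Varga for strictly diagonally dominant matrices. Both routes reduce the problem to "large part plus uniformly bounded perturbation," so they are morally equivalent; the Neumann-series version is a little cleaner to state, while the diagonal-dominance version produces a row-by-row bound that is somewhat sharper and easier to evaluate numerically, which matters in the later sections where $C_\Lambda$ enters the radii polynomials.

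However, your explicit decomposition of $\Lambda_k$ is missing a term. The derivative of $(\mathcal A\cdot Q)_k$ with respect to $(Q_{k,1},Q_{k,2})$ picks up not only the $k_1=0$ summand $\mathcal A_0 Q_k$, but also the $k_1=2k$ summand $\mathcal A_{2k} Q_{-k}$, because $Q_{-k,1}=Q_{k,1}$ and $Q_{-k,2}=-Q_{k,2}$. Concretely, the part of $f_k$ that depends on $(Q_{k,1},Q_{k,2})$ is
\begin{equation*}
\begin{pmatrix}
-\tfrac{k\pi}{\tau}Q_{k,2}+Q_{k,1}\bar R-\bigl(\mathcal A_0+Re(\mathcal A_{2k})\bigr)Q_{k,1}-Im(\mathcal A_{2k})Q_{k,2}\\[3pt]
\phantom{-}\tfrac{k\pi}{\tau}Q_{k,1}+Q_{k,2}\bar R-Im(\mathcal A_{2k})Q_{k,1}-\bigl(\mathcal A_0-Re(\mathcal A_{2k})\bigr)Q_{k,2}
\end{pmatrix}.
\end{equation*}
So the lower-order part $\mathcal M$ is actually $k$-dependent and is \emph{not} block diagonal: it has off-diagonal $Im(\mathcal A_{2k})$ contributions and asymmetric $\pm Re(\mathcal A_{2k})$ contributions on the diagonal. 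Consequently your parenthetical remark that only boundedness of $\mathcal A_0$ is used is not accurate; a uniform bound on all $\mathcal A_{2k}$ is needed, which is exactly what $\|\mathcal A\|_s<\infty$ supplies (indeed $|\mathcal A_{2k}|_\infty\le\|\mathcal A\|_s w_{2k}^{-s}\to 0$).

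The good news is that the repair is cosmetic: replace $\mathcal M$ by $\mathcal M_k$ incorporating the $\mathcal A_{2k}$ blocks, set $\mu\bydef\sup_{k\ge m}\|\mathcal M_k\|_\infty$, which is finite and controlled by $|\bar R|_\infty$, $|\mathcal A_0|_\infty$, and $\|\mathcal A\|_s$, and your Neumann-series estimate goes through verbatim with this $\mu$.
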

\begin{proof}
The real and imaginary parts of $(A\cdot Q)_{k}$ can be written explicitly as
$$
(A\cdot Q)_{k,1}=(Re(\mathcal A_{0})+Re(\mathcal A_{2k}))Q_{k,1}+Im(\mathcal A_{2k})Q_{k,2}+W_{1}
$$
$$
(A\cdot Q)_{k,2}=Im(\mathcal A_{2k})Q_{k,1}+(Re(\mathcal A_{0})-Re(\mathcal A_{2k}))Q_{k,2} +W_{2}
 $$
 where $W_1$ and $W_2$ do not depend on $Q_{k,1}$ and $Q_{k,2}$.
 Thus, looking at the definition of $f_{k}$ in \eqref{eq:fk},  it follows that $\Lambda_{k}$ is of the form
\begin{equation} \label{eq:Lamnda_k}
\Lambda_{k}=\left[ \begin{array}{cc}
\lambda_{1,1} \quad & -k\tfrac{2\pi}{2\tau} \mathbb I_{n^{2}}+\lambda_{1,2}\\
k\tfrac{2\pi}{2\tau} \mathbb I_{n^{2}}+\lambda_{2,1} & \lambda_{2,2} 
\end{array}\right],
\end{equation}
where the entries of $\lambda_{1,1}$ and $\lambda_{2,2}$ are linear combination of the entries of $\bar R$, $\mathcal A_{0}, \mathcal A_{2k}$ so that
$|\lambda_{1,1}|_{\infty},|\lambda_{2,2}|_{\infty}<|\bar R|_{\infty}+|\mathcal A_{0}|_{\infty}+|\mathcal A_{2k}|_{\infty}$ holds. 
Also, $\lambda_{2,1}=\lambda_{1,2}$ only depend on $\mathcal A_{2k}$. 
By a row permutation, the invertibility of $\Lambda_{k}$ is equivalent to the invertibility of
$$
\hat\Lambda_{k}=\left[ \begin{array}{cc}
k\tfrac{2\pi}{2\tau} \mathbb I_{n^{2}}+\lambda_{2,1} & \lambda_{2,2} \\
\lambda_{1,1} \quad & -k\tfrac{2\pi}{2\tau} \mathbb I_{n^{2}}+\lambda_{1,2}\\
 \end{array}\right].
$$ 
Since $|\lambda_{1,1}|_{\infty},|\lambda_{2,2}|_{\infty}<|\bar R|_{\infty}+|\mathcal A_{0}|_{\infty}+|\mathcal A_{2k}|_{\infty}$ and $|\lambda_{1,2}|_{\infty}<|\mathcal A_{2k}|_{\infty}$, the assumption $\| \mathcal A\|_{s}<\infty$ implies that the $|\lambda_{i,j}|_{\infty}$ are uniformly bounded in $k$, and moreover $|\lambda_{1,2}|_{\infty}$ is  decreasing. Thus there exists $K$ such that $\hat\Lambda_{k}$ is diagonally dominant for any $k\geq K$. This is enough to conclude that $\hat \Lambda_{k}$ is invertible for any $k\geq K$. 

Denote by $a_{i,i}$ the diagonal elements of $\hat\Lambda_{k}$. Hence, if $\hat \Lambda_{k}$ is diagonally dominant, that is if $|a_{i,i}|>\sum_{j\neq i}|\hat \Lambda_{k}(i,j)|$ for any $i=1,\dots, 2n^{2}$, then using a result from \cite{varga}, one gets the following bound 
$$
\| \hat\Lambda_{k}^{-1}\|_{\infty}\leq\max_{i}\left\{\frac{1}{|a_{i,i}|-\sum_{j\neq i}|\hat \Lambda_{k}(i,j)| }   \right\}.
$$
Therefore, for $k\geq K$
$$
\| \Lambda_{k}^{-1}\|_{\infty}=\|\hat\Lambda_{k}^{-1}\|_{\infty}\leq \frac{C_\Lambda}{k}
$$
for a constant $C_\Lambda$ depending on $\tau$, $\bar R$, $|\mathcal A_{0}|_{\infty}$ and $|\mathcal A_{2k}|_{\infty}$.
\end{proof}

Suppose that we chose the finite dimensional parameter $m>K$ where $K$, as defined in Lemma~\ref{lemma:Lambda_invertible}, is such that $\Lambda_{k}$ is invertible for any $k\geq K$. 
A formal diagonal concatenation of the operator $A_m$ and the sequence $\Lambda^{-1}_{k}$, for $k\geq m$, produces the linear operator 
 \begin{equation}
 \begin{array}{c}
 A:X\rightarrow X\\
 \qquad x\mapsto Ax\\
 (Ax)_{k}\bydef\left\{\begin{array}{cl}
 \Big(A_mx^{m}\Big)_{k} & k=0,\dots,m-1\\
 \Lambda^{-1}_{k}Q_{k} & k\geq m.
 \end{array}\right.
 \end{array}
 \end{equation}
We define the operator $T$ on $X$ as
$$
T(x)\bydef x-A f(x),
$$
and denote $T_{k}(x)=(T(x))_{k}$.
\begin{lemma}\label{lemma:f=0}
Recall \eqref{eq:A(t)} and \eqref{eq:calA_norm}, and assume that $\|\mathcal A\|_{s^{\star}}<\infty$ for $s^{\star}\geq 2$. Then  for any $2\leq s\leq s^{\star}$, $T:\Omega^s\rightarrow \Omega^s$ and solutions of $T(x)=x$ correspond to solutions of $f(x)=0$. 
\end{lemma}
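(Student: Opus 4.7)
The plan is to verify the two assertions separately: first that $T$ preserves $\Omega^s$, and second that fixed points of $T$ are precisely the zeros of $f$. Since $T(x)=x-Af(x)$ and $x\in\Omega^s$ by hypothesis, it suffices for the first assertion to show that $Af(x)\in\Omega^s$. By Lemma~\ref{lemma:f}, $f(x)\in\Omega^{s-1}$, so there is a constant $C>0$ (depending on $x$) such that $|f_k(x)|_\infty\leq C\,w_k^{-(s-1)}$ for every $k\geq 0$.

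The operator $A$ is block-diagonal in the split $X=X^m\oplus\Pi_\infty(X)$. On the finite block, $(Af(x))_k$ for $k=0,\dots,m-1$ is just the $k$-th component of $A_m\,\Pi_m f(x)$, which is a finite linear combination of the finitely many bounded quantities $f_\star(x),f_0(x),\dots,f_{m-1}(x)$; in particular its $s$-norm contribution is finite. On the tail, one uses Lemma~\ref{lemma:Lambda_invertible}: since $m>K$, the operators $\Lambda_k^{-1}$ are well-defined for all $k\geq m$ and satisfy $\|\Lambda_k^{-1}\|_\infty\leq C_\Lambda/k$. Using the fact that the induced infinity operator norm controls the entrywise max, one estimates, for $k\geq m$,
\[
|(Af(x))_k|_\infty \;=\; |\Lambda_k^{-1}f_k(x)|_\infty \;\leq\; \|\Lambda_k^{-1}\|_\infty\,|f_k(x)|_\infty \;\leq\; \frac{C_\Lambda}{k}\cdot\frac{C}{k^{s-1}} \;=\; \frac{CC_\Lambda}{k^s}.
\]
Multiplying by the weight $w_k^s=k^s$ yields a uniform bound in $k$, so the tail contribution to $\|Af(x)\|_s$ is finite as well. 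Combining the two blocks gives $Af(x)\in\Omega^s$, hence $T(x)\in\Omega^s$.

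For the second assertion, clearly $f(x)=0$ implies $T(x)=x$. Conversely, if $T(x)=x$ then $Af(x)=0$; by the block-diagonal form of $A$, this gives simultaneously $A_m\,\Pi_m f(x)=0$ and $\Lambda_k^{-1}f_k(x)=0$ for every $k\geq m$. Since $A_m$ is invertible by construction and each $\Lambda_k$ ($k\geq m\geq K$) is invertible by Lemma~\ref{lemma:Lambda_invertible}, the operator $A$ is injective on $X$, and therefore $f(x)=0$. This establishes the one-to-one correspondence.

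The only substantive step is the tail estimate: the result depends crucially on the fact that the extra factor $1/k$ provided by $\|\Lambda_k^{-1}\|_\infty$ (Lemma~\ref{lemma:Lambda_invertible}) exactly compensates for the loss of one unit of algebraic decay in Lemma~\ref{lemma:f}, so that $A$ turns the $\Omega^{s-1}$-valued map $f$ back into an $\Omega^s$-valued map. This is precisely why the approximate inverse was designed with $\Lambda_k^{-1}$ on the tail rather than, say, the identity; any slower decay of $\|\Lambda_k^{-1}\|_\infty$ would break the argument.
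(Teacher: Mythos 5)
Your proof is correct and follows essentially the same route as the paper: apply Lemma~\ref{lemma:f} to get $f(x)\in\Omega^{s-1}$, use the decay $\|\Lambda_k^{-1}\|_\infty\leq C_\Lambda/k$ from Lemma~\ref{lemma:Lambda_invertible} to show $A$ maps $\Omega^{s-1}$ back into $\Omega^s$, and invoke invertibility of $A_m$ and of each $\Lambda_k$ ($k\geq m>K$) for the one-to-one correspondence between fixed points of $T$ and zeros of $f$. The only cosmetic difference is that you spell out the finite block and the converse implication in more detail than the paper does.
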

\begin{proof}
From Lemma \ref{lemma:f}, given $x\in \Omega^s$ it follows that $f(x)\in \Omega^{s-1}$. The linear operator $A$ maps $\Omega^{s-1}$ in $\Omega^s$. Indeed for $k\geq m$, $|(Ax)_{k}|_{\infty}=|\Lambda^{-1}_{k}x_{k}|_{\infty}\leq \|\Lambda^{-1}_{k}\|_{\infty}|x_{k}|_{\infty}$.  Thus from Lemma \ref{lemma:Lambda_invertible} and assuming $x\in \Omega^{s-1}$, it follows that $|(Ax)_{k}|_{\infty}\leq\frac{C}{k}\frac{\|x\|_{s-1}}{w_{k}^{s-1}}<\frac{C_{1}}{w_{k}^{s}}$, for positive constants $C,C1$.  This proves that $T:\Omega^s\rightarrow \Omega^s$. Since $A_m$ is invertible by assumption and $\Lambda_{k}$ have been proved in Lemma \ref{lemma:Lambda_invertible} to be invertible for all $k \ge m>K$, it follows that the linear operator $A$ is invertible and therefore fixed points of $T$ correspond to zeros of $f(x)$. 
\end{proof}

By construction, when restricted to the finite dimensional reduction  $\Pi_{m}\Omega^s$, the operator $T$ acts as  $T(x^{m})=x^{m}-A_mf^{(m)}(x^{m})$. Thus on $\Pi_{m}\Omega^s$, $T$ is close  to the  Newton operator: the only difference is that point where the  derivative is computed does not change along the iteration  process. Therefore we can consider $T$ as an extension to a infinite dimensional space of a finite dimensional Newton-like operator. 

The existence of a fixed point for the operator $T$ will be assured by the Banach Fixed Point Theorem once the operator  $T$ has been proved to be a contraction on a suitable ball in $\Omega^s$. The suitable ball on which $T$ will be proved to be a contraction will be sought within the family of balls  $B_{\bar x}(r)\in\Omega^s$
$$
B_{\bar x}(r)=\bar x +B(r)
$$   
where $B(r)$ is the ball of radius $r$ in $\Omega^s$ centered in the origin and $r$ is treated as variable. Following the same approach as in different other papers (e.g. see \cite{MR2718657}, \cite{MR2443030}, \cite{KL}, \cite{time_periodic_PDEs}, \cite{MR2630003}, \cite{MR2592879}, \cite{CH_smooth}, \cite{GL1}, \cite{BLMM}, \cite{MR2338393}, \cite{MR2487806}), we are going to construct a finite  set of computable conditions, the so-called {\em radii polynomials}, to be solved in $r$, whose verification implies that the hypothesis of the Banach Fixed Point Theorem are satisfied. In practice, the radii polynomials are defined as realization of the hypotheses of the following theorem.

Suppose there exist two matrices sequences 
$$Y=(Y_{0},Y_{1},\dots Y_{k},\dots),\quad Z(r)=(Z_{0},Z_{1},\dots Z_{k},\dots)(r),\qquad  Y,Z\in X$$
such that 
\begin{equation}\label{eq:bounds}
|(T(\bar x)-\bar x)_k|\leq_{cw} Y_{k},\quad\sup_{b_1,b_2\in B(r)}\Big|\big[DT(\bar x+b_1)b_2\big]_{k}\Big|\leq_{cw} Z_{k}(r),\quad \forall k\geq 0.
\end{equation}
\begin{theorem}
\label{teor:exist}
Fix $s\geq 2$ and let $Y$ and $Z$ defined as in \eqref{eq:bounds}.
If there exists $r>0$ such that $\parallel Y+Z\parallel_s<r$, then the operator $T$ maps $B_{\bar x}(r)$ into itself and $T:B_{\bar x}(r)\rightarrow B_{\bar x}(r)$ is a contraction. Thus, by the Banach Fixed Point Theorem, there exists an unique $x^* \in B_{\bar x}(r)$ solution of $T(x^*)=x^*$ and therefore solution of $f(x^*)=0$. 
\end{theorem}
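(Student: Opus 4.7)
The plan is to verify the two hypotheses of the Banach Fixed Point Theorem on $B_{\bar x}(r)\subset \Omega^s$, namely that $T$ maps this ball into itself and that $T$ is a contraction there. Once both are in place, Lemma~\ref{lemma:f=0} converts the resulting fixed point of $T$ into a zero of $f$ and yields the claim.

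For the self-map property, let $x\in B_{\bar x}(r)$, so $x-\bar x\in B(r)$, and split
\[
T(x)-\bar x = \bigl(T(\bar x)-\bar x\bigr) + \bigl(T(x)-T(\bar x)\bigr).
\]
The first summand is bounded componentwise by $Y_k$ directly from the definition of $Y$ in \eqref{eq:bounds}. For the second, the mean value theorem on $\Omega^s$ gives
\[
T(x)-T(\bar x) = \int_0^1 DT\bigl(\bar x + t(x-\bar x)\bigr)(x-\bar x)\,dt,
\]
and for every $t\in[0,1]$ both $b_1:=t(x-\bar x)$ and $b_2:=x-\bar x$ lie in $B(r)$, so the integrand is bounded componentwise by $Z_k(r)$. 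Combining the two pieces and taking the $s$-norm yields $\|T(x)-\bar x\|_s \le \|Y+Z(r)\|_s < r$, which places $T(x)$ back in $B_{\bar x}(r)$.

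For the contraction, fix $x_1,x_2\in B_{\bar x}(r)$, set $\delta=x_1-x_2$, and parameterize the joining segment by $c(t)=x_2+t(x_1-x_2)$. The mean value theorem gives
\[
T(x_1)-T(x_2)=\int_0^1 DT(c(t))\,\delta\,dt,
\]
and $c(t)-\bar x$, being a convex combination of $x_1-\bar x$ and $x_2-\bar x$, stays in $B(r)$. The direction $\delta$ itself need not lie in $B(r)$, but since $DT(\bar x+b_1)$ is linear in its direction argument, one rescales: for $\tilde\delta := (r/\|\delta\|_s)\delta\in B(r)$ the bound in \eqref{eq:bounds} gives $|DT(c(t))\tilde\delta|_k \le Z_k(r)$ componentwise, whence
\[
|DT(c(t))\delta|_k = \frac{\|\delta\|_s}{r}\,|DT(c(t))\tilde\delta|_k \le \frac{\|\delta\|_s}{r}\,Z_k(r).
\]
Integrating in $t$ and taking the $s$-norm produces $\|T(x_1)-T(x_2)\|_s \le (\|Z(r)\|_s/r)\,\|\delta\|_s$. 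Since $Y$ and $Z(r)$ have nonnegative entries, the hypothesis gives $\|Z(r)\|_s\le \|Y+Z(r)\|_s<r$, so the Lipschitz constant $\kappa := \|Z(r)\|_s/r$ is strictly less than $1$.

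With invariance and contraction established, the Banach Fixed Point Theorem produces a unique $x^*\in B_{\bar x}(r)$ satisfying $T(x^*)=x^*$, and Lemma~\ref{lemma:f=0} turns this into the unique solution of $f(x^*)=0$ in $B_{\bar x}(r)$. The one genuinely delicate point I anticipate is the scaling step in the contraction estimate: the hypothesis \eqref{eq:bounds} only bounds $DT$ acting on directions drawn from $B(r)$, while $\delta=x_1-x_2$ can have $s$-norm up to $2r$. Linearity of $DT$ in its second argument rescues the estimate at no cost, but one must treat $\delta=0$ separately and interpret the supremum in \eqref{eq:bounds} over the closed ball (or obtain the bound at radius $r$ by a limit).
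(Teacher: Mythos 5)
Your proposal is correct and follows essentially the same route as the paper: the same triangle-inequality split $T(x)-\bar x = (T(\bar x)-\bar x) + (T(x)-T(\bar x))$, the same mean-value-theorem-plus-rescaling estimate to control $DT$, verification that $\|Z(r)\|_s \le \|Y+Z(r)\|_s < r$ by nonnegativity, and then Banach's fixed point theorem combined with Lemma~\ref{lemma:f=0}. The only differences are cosmetic: you use the integral form of the mean value theorem (slightly more careful for the matrix-valued components than the paper's pointwise statement), and you explicitly flag the open-versus-closed-ball subtlety at $\|\tilde\delta\|_s = r$ that the paper passes over without comment when it asserts $r(x-y)/\|x-y\|_s\in B(r)$.
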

\begin{proof}
Two statements need to be proved:
\begin{itemize}
\item[i)] $T(B_{\bar x}(r))\subset B_{\bar x}(r)$, that is $\|T(x)-\bar x\|_{s}<r$ for all  $x\in B_{\bar x}(r)$,
\item[ii)] $T$ is a contraction, that is there exists $\kappa \in (0,1)$ such that for every $x,y\in B_{\bar x}(r)$, one has that $\|T(x)-T(y)\|_{s} \le \kappa \|x-y\|_{s}$.
\end{itemize} 
For a given $k\geq 0$ and any $x,y\in B_{\bar x }(r)$, the mean value theorem implies 
$$
T_{k}(x)-T_{k}(y)=DT_{k}(z)(x-y)
$$
for some $z\in \{tx+(1-t)y\, : t\in[0,1]\}\subset B_{\bar x}(r)$. Note that $r\frac{(x-y)}{\|x-y\|_{s}}\in B(r)$ thus for \eqref{eq:bounds}
\begin{equation}\label{eq:bnd}
|T_{k}(x)-T_{k}(y)|=\left|DT_{k}(z)\frac{r(x-y)}{\|x-y\|_{s}}\right|\frac{1}{r}\|x-y\|_{s}\leq_{cw}\frac{Z_{k}(r)}{r}\|x-y\|_{s}
\end{equation}
The triangular inequality applied component-wise gives
$$
|T_{k}(x)-\bar x_{k}|\leq_{cw}|T_{k}(x)-T_{k}(\bx)|+|T_{k}(\bx)-\bx_{k}|\leq_{cw}Y_{k}+Z_{k}(r)
$$
hence
$$
|T_{k}(x)-\bar x_{k}|_{\infty}\leq|Y_{k}+Z_{k}(r)|_{\infty} .
$$
Therefore for any $x\in B_{\bar x}(r)$
$$
\|T(x)-\bar x\|_{s}=\sup_{k\geq 0}\{ |T_{k}(x)-\bar x_{k}|_{\infty}w_{k}^{s}\}\leq\sup_{k\geq 0}\{|Y_{k}+Z_{k}(r)|_{\infty}w_{k}^{s}\}=\|Y+Z(r)\|_{s}<r .
$$
This proves $i)$.

Again from \eqref{eq:bnd}, for any $x,y\in B_{\bar x}(r)$,  $\displaystyle{|T_{k}(x)-T_{k}(y)|_{\infty}\leq \frac{|Z_{k}(r)|_{\infty}}{r} \|x-y\|_{s}}$, thus
\begin{equation}\label{eq:bnd2}
\|T(x)-T(y)\|_{s}\leq\frac{\|Z(r)\|_{s}}{r}\|x-y\|_{s}
\end{equation}
Note that all the entries of  $Y_{k}$ and $Z_{k}(r)$ are non negative, thus $|Z_{k}(r)|_{\infty}\leq|Y_{k}+Z_{k}(r)|_{\infty}$ and $\|Z(r)\|_{s}\leq\|Y+Z(r)\|_{s}<r$. 
That implies that 
$$
\kappa \bydef \frac{\|Z(r)\|_s}{r} \in (0,1),
$$
and we can conclude the proof of $ii)$. An application of the Banach Fixed Point Theorem on the Banach space $B_{\bar x}(r)$ gives the existence and unicity of a solution $x^*$ of the equation $T(x)=x$ in $B_{\bar x}(r)$ and, from Lemma \ref{lemma:f=0}, of a solution of $f(x)=0$.
\end{proof}

\subsection{The radii polynomials} \label{sec:radii_polynomials}

As already mentioned in Section~\ref{sec:Intro}, the radii polynomials are a set of $r$-dependent polynomials $p_{k}(r)$ defined in such a way that if $r^{*}$ is a common solution of $p_{k}(r^{*})<0$, then the ball 
$B_{\bx}(r^*) \subset \Omega^s$ of radius $r^*$ centered at the numerical approximation $r^*$ contains a unique solution of $f(x)=0$.
This is due to the fact that by construction of the polynomials, one has that $\|Y+Z(r^{*})\|_{s}<r^{*}$, meaning that the hypotheses of Theorem~\ref{teor:exist} are satisfied.
In terms of the components, the formula $\parallel Y+Z(r)\parallel_s<r$  reads as
\begin{equation}\label{eq:totsystem}
|Y_k+Z_k(r)|_{\infty}-\frac{r}{w_k^s}<0 , \quad \forall k\geq 0 .
\end{equation}
The latter consists of a system of infinitely many inequalities, which is then impossible to be verify directly with computations. In order to reduce \eqref{eq:totsystem} to a finite number of inequalities, suppose that, for a given $M$, there exist $Y_{ M}$ and $Z_{M}(r)$ such that 
\begin{equation}\label{eq:coda}
|(T(\bar x)-\bar x)_k|_{\infty}\leq \frac{ M^{s}}{k^{s}}Y_{ M},\quad\sup_{b_1,b_2\in B(r)}\Big|\big[DT(\bar x+b_1)b_2\big]_{k}\Big|_{\infty}\leq\frac{M^{s}}{k^{s}} Z_{M}(r),\quad \forall k\geq M,
\end{equation}
and introduce the  set of $ M +1$  radii polynomials as follows.
\begin{definition} \label{def:radii_polynomials}
The {\em radii polynomials} are defined as
\begin{equation}
\begin{array}{l}
p_{k}(r)\bydef Y_{k}+Z_{k}(r)-\frac{r}{w_{k}^{s}} (\mathds{1}_{n},\mathds{1}_{n}),\quad k=0,\dots, M-1\\
p_{ M}\bydef Y_{ M}+ Z_{ M}-\frac{r}{w_{M}^s} .
\end{array}
\end{equation}
\end{definition}

\begin{theorem}
Consider $ M$ and let $Y, Z$ such that  $Y_{k},Z_{k}$ satisfy \eqref{eq:bounds} for $k=0,\dots, M-1$ while for $k\geq  M$ define
$$
Y_{k}\bydef \frac{ M^{s}}{k^{s}}Y_{M}[\mathds 1_{n},\mathds 1_{n}],\quad Z_{k}(r)\bydef \frac{ M^{s}}{k^{s}}Z_{ M}[\mathds 1_{n},\mathds 1_{n}],
$$
where $Y_{M},Z_{ M}$ 
 satisfy the tail condition \eqref{eq:coda}. If there exists $r>0$ such that $p_{k}(r)<_{cw} 0$ for all $k=0\dots, M $, then there exists a unique $x^* \in B_{\bar x}(r)$ such that $T(x^*)=x^*$ and $f(x^*)=0$.
\end{theorem}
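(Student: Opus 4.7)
The plan is to apply Theorem~\ref{teor:exist} directly, so it suffices to verify that the sequences $Y = (Y_k)_{k \geq 0}$ and $Z(r) = (Z_k(r))_{k \geq 0}$ satisfy the componentwise bounds in \eqref{eq:bounds} for \emph{all} $k \geq 0$, and then that $\|Y + Z(r)\|_s < r$. For the finite-dimensional part $k = 0, \dots, M-1$, the componentwise bounds in \eqref{eq:bounds} hold by hypothesis on the choice of $Y_k, Z_k(r)$. For the tail $k \geq M$, the sequences are defined to have constant entries equal to $\tfrac{M^s}{k^s} Y_M$ and $\tfrac{M^s}{k^s} Z_M$ respectively. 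Since the $\infty$-norm of $(T(\bar x) - \bar x)_k$ is bounded by $\tfrac{M^s}{k^s} Y_M$ via the tail condition \eqref{eq:coda}, each entry of $|(T(\bar x) - \bar x)_k|$ is dominated by the corresponding (uniform) entry of $Y_k$; the same reasoning with \eqref{eq:coda} applied to $DT$ handles $Z_k(r)$.

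Next I would estimate $\|Y + Z(r)\|_s$ by splitting the supremum at $k = M$. For $k \in \{0, \dots, M-1\}$, the hypothesis $p_k(r) <_{cw} 0$ reads $Y_k + Z_k(r) <_{cw} \tfrac{r}{w_k^s}(\mathds{1}_n, \mathds{1}_n)$, hence $|Y_k + Z_k(r)|_\infty\, w_k^s < r$. For $k \geq M$, using $w_k = k$ and the explicit definition of the tail entries, one computes $|Y_k + Z_k(r)|_\infty\, w_k^s = M^s(Y_M + Z_M)$, which is strictly less than $r$ precisely because $p_M = Y_M + Z_M - r/M^s < 0$ (using $w_M^s = M^s$). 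Taking the supremum over $k \geq 0$ therefore yields $\|Y + Z(r)\|_s < r$, and Theorem~\ref{teor:exist} combined with Lemma~\ref{lemma:f=0} produces a unique $x^* \in B_{\bar x}(r)$ with $T(x^*) = x^*$ and $f(x^*) = 0$.

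The argument is essentially bookkeeping and I do not expect any genuine obstacle. The one conceptually meaningful observation is that the weight $w_k^s = k^s$ appearing in the $s$-norm exactly cancels the $M^s/k^s$ decay built into the tail definitions of $Y_k$ and $Z_k(r)$, so that the single scalar inequality $p_M(r) < 0$ simultaneously controls the infinitely many tail modes; this is what makes the reduction from \eqref{eq:totsystem} to the finite list of polynomial inequalities $p_0, \dots, p_M$ legitimate.
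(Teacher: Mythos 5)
Your proposal is correct and follows essentially the same argument as the paper's own proof: verify \eqref{eq:bounds} for the tail via \eqref{eq:coda}, then split the supremum defining $\|Y+Z(r)\|_s$ at $k=M$ and observe that the weight $w_k^s = k^s$ cancels the $M^s/k^s$ factor so that $p_M(r)<0$ controls all tail modes at once. The only detail you leave implicit (and the paper makes explicit) is that $Y_k, Z_k \geq_{cw} 0$, which is what lets you pass from the componentwise inequality $Y_k+Z_k(r) <_{cw} \tfrac{r}{w_k^s}(\mathds 1_n,\mathds 1_n)$ to the bound $|Y_k+Z_k(r)|_\infty w_k^s < r$.
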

\begin{proof}
Since by definition $Y_{k}\geq_{cw}0,Z_{k}\geq_{cw}0$,  the relations $p_{k}(r)<_{cw}0$ imply that $|Y_{k}+Z_{k}(r)|_{\infty}<\frac{r}{w_{k}^{s}}$ for $k=0,\dots,  M-1$. For  $k\geq M$, $Y_{k}, Z_{k}$ satisfy   \eqref{eq:bounds} and from  $Y_{ M}+ Z_{ M}(r)-\frac{r}{w_{ M}^s} <0$,  it follows that $|Y_k+Z_k(r)|_{\infty}-\frac{r}{w_k^s}<0$. Indeed
$$
|Y_k+Z_k(r)|_{\infty}=\frac{ M^{s}}{k^{s}}(Y_{M}+Z_{ M})<\frac{ M^{s}}{k^{s}}\frac{r}{M^{s}},\quad\forall k \ge M.
$$
Hence
$$
\|Y+Z\|_{s}=\sup_{k\geq 0}\{|Y_{k}+Z_{k}|w_{k}^{s}\}<r,
$$
and the result follows from Theorem~\ref{teor:exist}.
\end{proof}

\noindent

\subsection{Construction of the bounds \boldmath $Y,Z$  \unboldmath}\label{sec:construction}

This section is devoted to the construction of  the matrices $Y_{k}$, $Z_{k}$ satisfying \eqref{eq:bounds}, and of the asymptotic bounds $Y_{M}$, $Z_{ M}$ satisfying \eqref{eq:coda}. This construction provides the complete description of the radii polynomials introduced in Definition~\ref{def:radii_polynomials}. With the aim of remaining as general as possible, the only constraint we assume on the $\tau$-periodic function $A(t)$ is that the vector of Fourier coefficients $\mathcal A$ given in \eqref{eq:A(t)} satisfies $\|\mathcal A\|_{s^{\star}}<\infty$ for $s^{\star}\geq 2$. Nevertheless, further information on the coefficients $\mathcal A_{k}$ may be useful to get sharper analytical estimates.

In what follows, the growth rate parameter $s$ has been fixed so that $2\leq s\leq s^{\star}$, the finite dimensional parameter $m$ has been chosen greater than $K$, where $K$ is a lower bound given by Lemma \ref{lemma:Lambda_invertible} and the computational parameter $M$ has been chosen so that $M>m$. Moreover, assume that one computed $\Lambda_{k}^{-1}$ for $k=m,\dots,M-1$. Note that in some cases, it will be possible to achieve this task analytically, but in other cases, only an interval enclosure using rigorous numerics will be possible. Also, recalling Lemma \ref{lemma:Lambda_invertible}, denote by $C_{\Lambda}$ a computable constant such that 
\begin{equation} \label{eq:asymp_bound}
\|\Lambda_{k}^{-1}\|_{\infty}\leq\frac{C_{\Lambda}}{k}, ~ {\rm for } ~ k \ge m .
\end{equation}
\subsubsection{The bound \boldmath $Y$  \unboldmath}
By definition, $T(\bar x)-\bar x=-Af(\bar x)$, thus define $Y$ as
\begin{equation}
Y_{k}=\left\{
\begin{array}{ll}
\left|\left(A_mf^{(m)}(\bar x)\right)_{k}\right|,\quad& k=0\dots,m-1\\
\left|\Lambda_{k}^{-1}f_{k}(\bar x)\right|,& k=m,\dots, M-1.
\end{array}\right.
\end{equation}
The tail bound $Y_{ M}$ is defined so that $Y_{ M}\frac{M^{s}}{k^{s}}>\left|\Lambda_{k}^{-1}f_{k}(\bar x)\right|_{\infty}$, for any $k> M$. We now introduce a coarse bound $Y_{ M}$ based on the relation $\left|\Lambda_{k}^{-1}f_{k}(\bar x)\right|_{\infty}\leq \|\Lambda^{-1}_{k}\|_{\infty}|f_{k}(\bar x)|_{\infty}$. Since  $\bar Q_{k,1}=\bar Q_{k,2}=0$  for any $k \ge m$, it follows that
\begin{equation}
f_{k}(\bar x)=\left[\begin{array}{l}
-(A\cdot Q)_{k,1}\\
\noalign{\vskip 4pt}-(A\cdot Q)_{k,2}\\
\end{array} \right]=\displaystyle{\sum_{\substack{
k_1+k_2=k \\
|k_2|< m
}}}\left[\begin{array}{l}
-Re\left(\mathcal A_{k_1}(\bar Q_{k_2,1}+i\bar Q_{k_2,2})\right)\\
\noalign{\vskip 4pt}-Im\left(\mathcal A_{k_1}(\bar Q_{k_2,1}+i\bar Q_{k_2,2})\right)
\end{array} \right],\quad \forall k\geq  M.
\end{equation}
Now, using the fact that $|\mathcal A_{k}|_{\infty}\leq\|\mathcal A\|_{s^{\star}}w_{k}^{-s^{\star}}$, both $|f_{k,1}(\bar x)|$ and $|f_{k,2}(\bar x)|$ are component-wise  bounded by
\begin{equation*}\begin{array}{ll}
\left|\displaystyle{\sum_{\substack{
k_1+k_2=k \\
|k_2|< m
}}}\mathcal A_{k_1}(\bar Q_{k_2,1}+i\bar Q_{k_2,2})\right|&\leq_{cw}\displaystyle{\sum_{\substack{
k_1+k_2=k \\
|k_2|< m}}}|\mathcal A_{k_1}||\bar Q_{k_2,1}+i\bar Q_{k_2,2}|\\
&\leq_{cw} |\mathcal A_{k}||\bar Q_{0}|+\displaystyle{\sum_{l=1}^{m-1}}(|\mathcal A_{k-l}|+|\mathcal A_{k+l}|)|\bar Q_{l,1}+i\bar Q_{l,2}| \\
&\leq_{cw}\frac{\|\mathcal A\|_{s^{\star}}}{w_{k}^{s}}\left[\frac{w_{k}^{s}}{w_{k}^{s^{\star}}} \mathds 1_{n}|\bar Q_{0}|+\displaystyle{\sum_{l=1}^{m-1}}w_{k}^{s}\left(\frac{1}{w_{k+l}^{s^{\star}}}+\frac{1}{w_{k-l}^{s^{\star}}} \right)\mathds 1_{n}|\bar Q_{l,1}+i\bar Q_{l,2}|\right] .
\end{array}
\end{equation*}
For $k\geq M$ the bounds $\frac{w_{k}^{s}}{w_{k}^{s^{\star}}}\leq 1$ and $w_{k}^{s}\left(\frac{1}{w_{k+l}^{s^{\star}}}+\frac{1}{w_{k-l}^{s^{\star}}}\right)\leq 1+\left(1-\frac{l}{ M} \right)^{-s}$ hold, thus one computes the matrix
$$
W=\mathds 1_{n}|\bar Q_{0}|+\displaystyle{\sum_{l=1}^{m-1}}\left(1+\left(1-\frac{l}{M} \right)^{-s}\right)\mathds 1_{n}|\bar Q_{l,1}+i\bar Q_{l,2}|
$$
so that
$$
 |f_{k}(\bar x)|_{\infty}\leq k^{-s}\|\mathcal A\|_{s^{\star}}|W|_{\infty},\quad {\rm for } ~ k\geq  M.
$$
Finally, using $\|\Lambda_{k}^{-1}\|_{\infty}\leq\frac{C_{\Lambda}}{ M} $, define
$$
Y_{ M}\bydef\frac{1}{ M^{s+1}}\|\mathcal A\|_{s^{\star}}C_{\Lambda}|W|_{\infty} .
$$
\subsubsection{The bound \boldmath $Z$  \unboldmath}

To construct the bound $Z$ so that
$$
\sup_{b_1,b_2\in B(r)}\Big|\big[DT(\bar x+b_{1})b_{2}\big]_{k}\Big|\leq_{cw} Z_{k}(r),\quad \forall k\geq 0,
$$
it is convenient  to factor the points $b_{1},b_{2}\in B(r)$ as $b_{1}=ru$, $b_{2}=rv$ with $u,v\in B(1)$, to expand in the variable $r$ and finally to uniformly  bound the expression using the fact that $u,v\in B(1)$.  Denote  $u=[u_{0},u_{1},\dots,u_{k},\dots]$, where each $u_{k}=(u_{k,1},u_{k,2})\in Mat(n,\mathbb R)^{2}$. In order to simplify the exposition, both the matrices $u_{k,1},u_{k,2}$ will be denoted as $u_{k}$. Indeed, what really matters is the bound $|u_{k,1}|, |u_{k,2}| \leq_{cw}w_{k}^{-s}$ that finally will be applied to obtain the uniform estimates.   The similar notation for $v_{k}$.

Let us introduce the linear operator $ A^{\dag}:\Omega^{s+1}\rightarrow \Omega^s$ defined as 
\begin{equation}
 \left(A^{\dag}x\right)_{k}\bydef\left\{\begin{array}{ll}
\Big(Df^{(m)} \cdot  x^{m}\Big)_{k}\quad &k=0,\dots,m-1\\
\Lambda_kx_k, & k\geq m,
\end{array}\right.
\end{equation} 
and consider the  splitting  
\begin{equation}\label{eq:split}
\begin{array}{rl}
DT(\bar x+ru)rv=&\left[I-ADf(\bar x+ru) \right]rv\\
=&\left[I-A A^{\dag}\right]rv - A\left[Df(\bar x+ru)- A^{\dag}\right]rv.
\end{array}
\end{equation}
The definition of $Z(r)$ will follow as a result of different intermediate estimates: indeed we are going to introduce the vectors $Z^{0}$, $Z^{1}$, $Z^{2}$ such that 
$$
\left|\left[I-AA^{\dag} \right]rv\right|\leq_{cw}Z^{0}r\ , \quad \forall v\in B(1),
$$
$$�
\left|\left[Df(\bar x+ru)- A^{\dag}\right]rv\right|\leq_{cw} Z^{1}r+Z^{2}r^{2},\quad \forall u,v\in B(1).
$$
From \eqref{eq:split} it follows that
\begin{equation}\label{eq:splitnorm}
\Big|\left[DT(\bar x+ru)rv\right]_{k}\Big|\leq_{cw}\Big|\left[\left(I-A A^{\dag}\right)rv\right]_{k}\Big| +\Big|\left[ A\left(Df(\bar x+ru)- A^{\dag}\right)rv\right]_{k}\Big|.
\end{equation}
Hence, the elements $Z_{k}$, for $k=0,\dots M-1$ can be defined as
$$
Z_{k}(r)=\left\{\begin{array}{ll}
Z^{0}_{k}r+\Big[|A_m|(Z^{1}r+Z^{2}r^{2})^{m}\Big]_{k},\quad &k=0,\dots,m-1\\
Z^{0}_{k}r+|\Lambda_{k}^{-1}|(Z_{k}^{1}r+Z_{k}^{2}r^{2}), & k=m,\dots,M-1.
\end{array}\right.
$$
Finally the element $Z_{ M}$ will be defined to satisfy \eqref{eq:coda}.
\vskip 5pt

{\it The bound $Z_{0}$}

\noindent Since $|v_{k}|\leq_{cw} w_{k}^{-s}\mathds 1$, define $Z^{0}$ as
\begin{equation}
 (Z^{0})_{k}=\left\{\begin{array}{ll}
\left[|I-A A^{\dag}|\{w_{j}^{-s}\mathds 1\}_{j=0}^{m-1}\right]_{k}\quad &k=0,\dots,m-1\\
0, & k\geq m
\end{array}\right.
\end{equation} 
so that
$$
\left|\left[I-AA^{\dag} \right]rv\right|\leq_{cw}Z^{0}r .
$$
Note that  $A^{\dag}$ is an almost inverse of $A$, indeed by definition $A_mDf^{(m)}\approx I$. Then the size of $Z^0$ is  small and  depends on the accuracy of the numerical method that computes the inverse $A_m$.  

{\it  The bounds $Z^{1},Z^{2}$}

\noindent Concerning the terms in $Z^{1}, Z^{2}$, consider the expansion as quadratic polynomial in $r$
\begin{equation}\label{eq:expansion}
\left[\left(Df(\bar x+ru)- A^{\dag}\right)rv\right]_{k}=\sum_{i=1,2}c_{k,i}r^{i}.
\end{equation}
First note that
$$
{\displaystyle(A\cdot Q)_{k,1}=\sum_{j+l=k}\Big(Re(\mathcal A_{j})Q_{l,1}-Im(\mathcal A_{j})Q_{l,2} \Big)},
$$
$$
{\displaystyle(A\cdot Q)_{k,2}=\sum_{j+l=k}\Big(Im(\mathcal A_{j})Q_{l,1}+Re(\mathcal A_{j})Q_{l,2} \Big)},
$$
then, taking in mind that $Q_{k,2}=-Q_{-k,2}$ and denoting with $sg(l)={\rm sign}(l)$, one computes

\begin{equation} \label{eq:c0}
\begin{array}{rl}
c_{0,1}=\left[\begin{array}{c}
2\sum_{k\geq m}v_{k},\\
{\displaystyle -\sum_{\substack{
l+j=0\\
|l|\geq m
}}\Big(Re(\mathcal A_{j})-sg(l)Im(\mathcal A_{j})\Big)v_{|l|}}
\end{array}\right] ,\quad c_{0,2}=\left[\begin{array}{c}
0\\
u_{0}v_{0}+v_{0}u_{0}
\end{array}\right],
\end{array}
\end{equation}
for  $ k=1,\dots, m-1$
\begin{equation} \label{eq:ck_1_m}
\begin{array}{rl}
c_{k,1}={\displaystyle -\sum_{\substack{
l+j=k\\
|l|\geq m
}}}\left[\begin{array}{c}
{\displaystyle \Big(Re(\mathcal A_{j})-sg(l)Im(\mathcal A_{j})\Big)v_{|l|}}\\
{\displaystyle \Big(Im(\mathcal A_{j})+sg(l)Re(\mathcal A_{j})\Big)v_{|l|}}
\end{array}\right] ,\quad c_{k,2}=\left[\begin{array}{c}
u_{k}v_{0}+v_{k}u_{0}\\
u_{k}v_{0}+v_{k}u_{0}
\end{array}\right],
\end{array}\
\end{equation}
and for $k\geq m$
\begin{equation} \label{eq:ck_greater_m}
\begin{array}{rl}
c_{k,1}={\displaystyle -\sum_{\substack{
l+j=k\\
|l|\neq k
}}}\left[\begin{array}{c}
{\displaystyle \Big(Re(\mathcal A_{j})-sg(l)Im(\mathcal A_{j})\Big)v_{|l|}}\\
{\displaystyle \Big(Im(\mathcal A_{j})+sg(l)Re(\mathcal A_{j})\Big)v_{|l|}}
\end{array}\right] ,\quad c_{k,2}=\left[\begin{array}{c}
u_{k}v_{0}+v_{k}u_{0}\\
u_{k}v_{0}+v_{k}u_{0}
\end{array}\right].
\end{array}
\end{equation}

Therefore $Z^{1}$, $Z^{2}$ need to be defined so that  $Z^{1}_{k}\geq_{cw}|c_{k,1}|$ and $Z^{2}_{k}\geq_{cw}|c_{k,2}|$.
To achieve this, it is  enough to substitute in the above expression the bounds $|u_{k}|,|v_{k}|\leq_{cw}w_{k}^{-s}\mathds 1_{n}$ and $|\pm Re(\mathcal A_{j})\pm Im(\mathcal A_{j})|\leq_{cw} |Re(\mathcal A_{j})|+|Im(\mathcal A_{j})|$.
Since $\mathds 1_{n}\mathds 1_{n}=n\mathds 1_{n}$, one gets
\begin{equation}\label{eq:Z2k}
\begin{array}{l}
|c_{0,2}|\leq_{cw}2n\left[\begin{array}{c}
0\\
\mathds 1_{n}\\
\end{array}\right]=: Z^{2}_{0},\\
|c_{k,2}|\leq_{cw}2nw_{k}^{-s}\left[\begin{array}{c}
\mathds 1_{n}\\
\mathds 1_{n}\\
\end{array}\right]=:Z^{2}_{k},\ k\geq 1.
\end{array}
\end{equation}

  However this approach is not completely feasible  for the computation of $|c_{k,1}|$, due to  the presence of series. Therefore it is necessary to introduce  further computational parameters 
\begin{equation}\label{eq:Lk}
L_{k}>\max\{k,m\}
\end{equation}
 and matrices  $H_{k}$ so that
\begin{equation}\label{eq:boundH}
\begin{array}{ll}
|c_{0,1}|\leq_{cw}\left[\begin{array}{c}
2\sum_{j= m}^{L_{0}}w_{j}^{-s}\mathds 1_{n},\\
{\displaystyle \sum_{\substack{
l+j=0\\
m\leq |l|\leq L_{0}
}}\Big(|Re(\mathcal A_{j})|+|Im(\mathcal A_{j})|\Big)w_{l}^{-s}\mathds 1_{n}}
\end{array}\right] +H_{0}=:Z^{1}_{0}&\\
|c_{k,1}|\leq_{cw}{\displaystyle \sum_{\substack{
l+j=k\\
m\leq|l|\neq k, |l|\leq L_{k}
}}}\left[\begin{array}{c}
{\displaystyle \Big(|Re(\mathcal A_{j})|+|Im(\mathcal A_{j})|\Big)w_{l}^{-s}\mathds 1_{n}}\\
{\displaystyle \Big(|Im(\mathcal A_{j})|+|Re(\mathcal A_{j})|\Big)w_{l}^{-s}\mathds 1_{n}}\\
\end{array}\right]+H_{k}=:Z^{1}_{k},& k=1,\dots,m-1\\
\end{array}
\end{equation}
and similarly for $k\geq m$. 
It means that the bound $Z^{1}$ has been defined as sum of two factors: the first obtained by rigorous computation of a finite number of elements in the series, the second analytically defined to estimate the tail part of the series that have not been computed. 

Define
$$
\zeta(M,s)\bydef \frac{1}{(M+1)^{s}}+\frac{1}{(M+2)^{s}}+\frac{1}{s-1}\frac{1}{(M+2)^{s-1}},
$$
and 
\begin{equation}\label{eq:H}
H_{0}\bydef\left[\begin{array}{c}
2\zeta(L_{0},s)\mathds 1_{n}\\
h_{0}\mathds 1_{n}
\end{array} \right],\quad H_{k}\bydef h_{k}\left[\begin{array}{l}
\mathds 1_{n}\\
\mathds 1_{n}
\end{array} \right],
\end{equation}
where for $k\geq 0$
$$
h_{k}=\frac{\sqrt{2}n\|\mathcal A\|_{s^{\star}}}{(L_{k}+1-k)^{s^{\star}-s}}\Big(\zeta(L_{k}-k,2s)+\zeta(L_{k},2s)\Big).
$$
Hence, one has the following result.
\begin{lemma}\label{lemma:H}
Formula  \eqref{eq:boundH} holds for $H_{0}$, $H_{k}$ defined in \eqref{eq:H}. 
\end{lemma}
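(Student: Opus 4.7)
The plan is to unpack the expressions for $c_{k,1}$ in \eqref{eq:c0}--\eqref{eq:ck_greater_m}, discard the portion of the sums with $m\le |l|\le L_k$ (which is already accounted for by the explicit finite sum in \eqref{eq:boundH}), and show that what remains is bounded componentwise by $H_k$. The quantity to bound is therefore the ``post-$L_k$'' tail
\[
\sum_{\substack{l+j=k\\|l|>L_k}}\bigl(|Re(\mathcal A_j)|+|Im(\mathcal A_j)|\bigr)\,|v_{|l|}| ,
\]
together with, for the special case $k=0$, the scalar trace-type tail $2\sum_{l>L_0}v_{l}$.

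First, I would plug in the uniform bounds $|v_{|l|}|\le_{cw} w_{|l|}^{-s}\mathds 1_n$ (from $v\in B(1)$), and $|\mathcal A_j|\le_{cw} \|\mathcal A\|_{s^\star}w_j^{-s^\star}\mathds 1_n$ (from the $s^\star$-norm hypothesis), together with the elementary pointwise inequality $|a|+|b|\le\sqrt 2\sqrt{a^2+b^2}$ applied entrywise to real and imaginary parts of $\mathcal A_j$, which explains the factor $\sqrt 2$ in $h_k$. Carrying out the matrix product $|\mathcal A_j|\,\mathds 1_n\le_{cw} n\|\mathcal A\|_{s^\star}w_j^{-s^\star}\mathds 1_n$ produces the factor $n$. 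After these reductions, the remaining scalar task is to bound
\[
\sum_{\substack{l+j=k\\|l|>L_k}} w_j^{-s^\star}w_{|l|}^{-s}
=\sum_{l>L_k}(l-k)^{-s^\star}l^{-s}+\sum_{l>L_k}(k+l)^{-s^\star}l^{-s},
\]
where the split corresponds to $l>L_k$ and $l<-L_k$ respectively (recall $L_k\ge k$ by \eqref{eq:Lk}).

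The key trick, and where the form of $h_k$ comes from, is to extract the ``smoothing'' factor $(L_k+1-k)^{-(s^\star-s)}$ from each summand by writing $(l-k)^{-s^\star}=(l-k)^{-(s^\star-s)}(l-k)^{-s}\le(L_k+1-k)^{-(s^\star-s)}(l-k)^{-s}$ for $l\ge L_k+1$, and analogously $(k+l)^{-s^\star}\le(L_k+1-k)^{-(s^\star-s)}\,l^{-s}$ using $k+l\ge l\ge L_k+1\ge L_k+1-k$. What remains inside each sum is a product of two $s$-decay factors, to which the elementary inequality $ab\le\tfrac12(a^2+b^2)$ is applied to yield $\sum_{l>L_k}(l-k)^{-s}l^{-s}\le\tfrac12(\zeta(L_k-k,2s)+\zeta(L_k,2s))$ and similarly for the other sum (after the substitution $p=k+l$ and the monotonicity of $\zeta$ in its first argument). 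Adding and collapsing via $\zeta(L_k+k,2s)\le\zeta(L_k-k,2s)$ gives the single bound $\zeta(L_k-k,2s)+\zeta(L_k,2s)$ that appears in the definition of $h_k$. For the first component of $H_0$, the sum is a plain $\sum_{l\ge L_0+1}l^{-s}$, which is bounded by $\zeta(L_0,s)$ via the standard integral comparison $\sum_{l\ge M+3}l^{-s}\le\int_{M+2}^\infty x^{-s}dx=\tfrac{1}{(s-1)(M+2)^{s-1}}$, accounting for the three terms in the definition of $\zeta(M,s)$.

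The main technical obstacle is bookkeeping: ensuring that the combined prefactor $(L_k+1-k)^{-(s^\star-s)}$ is legitimate for \emph{both} branches (positive and negative $l$) of the convolution tail simultaneously, and verifying that the slight looseness in merging $\tfrac12\zeta(L_k-k,2s)+\tfrac32\zeta(L_k,2s)$ into $\zeta(L_k-k,2s)+\zeta(L_k,2s)$ is absorbed by monotonicity of $\zeta$. Once these are in place, the definitions of $h_k$ and $H_k$ in \eqref{eq:H} match exactly the bounds produced, and the $k=0$ trace component drops out trivially from the only sum of pure $l^{-s}$ terms. The argument for $k\ge m$ is identical in structure, with the only change being that the ``forbidden'' index is $|l|=k$ instead of the range $|l|<m$; this affects only the finite computed part of \eqref{eq:boundH}, not the tail estimate $H_k$.
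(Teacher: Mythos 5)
Your argument is correct and follows the same overall structure as the paper's proof: both extract the smoothing prefactor $(L_k+1-k)^{-(s^\star-s)}$ from the $s^\star$-decay of $\mathcal A_j$ (using $|j|\geq L_k+1-k$ on both branches $l>L_k$ and $l<-L_k$), use $(|Re(\mathcal A_j)|+|Im(\mathcal A_j)|)\leq_{cw}\sqrt2|\mathcal A_j|$ for the $\sqrt2$, and $\mathds 1_n\mathds 1_n=n\mathds 1_n$ for the $n$, leaving a tail of the form $\sum_{l>L_k}\bigl[(l-k)^{-s}+(k+l)^{-s}\bigr]l^{-s}$ to bound by $\zeta(L_k-k,2s)+\zeta(L_k,2s)$. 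The one place you deviate is in how that last sum is estimated: the paper reindexes $\sum_{l>L_k}(l-k)^{-s}l^{-s}$ via $p=l-k$ and then uses $(p+k)^{-s}\leq p^{-s}$ to get $\sum_{p>L_k-k}p^{-2s}<\zeta(L_k-k,2s)$, while you use AM--GM $(ab\leq\tfrac12(a^2+b^2))$ plus monotonicity of $\zeta$ in its first argument. Both routes are valid and yield the identical final bound; the paper's change-of-variable argument is a hair tighter (it avoids the $\tfrac12\zeta(L_k-k,2s)+\tfrac32\zeta(L_k,2s)\rightarrow\zeta(L_k-k,2s)+\zeta(L_k,2s)$ absorption that you need to justify separately), whereas your AM--GM version is arguably more elementary. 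One small bookkeeping wobble: you first describe applying AM--GM to the $(k+l)^{-s}l^{-s}$ term too (yielding $\tfrac12\zeta(L_k+k,2s)$), but then the merged expression $\tfrac12\zeta(L_k-k,2s)+\tfrac32\zeta(L_k,2s)$ you quote instead corresponds to having already replaced $(k+l)^{-s}\leq l^{-s}$ before summing; either variant works and collapses to the same bound, but you should settle on one. Everything else --- the treatment of the $k=0$ first component via a plain $\zeta(L_0,s)$ tail, the observation that $L_k>\max\{k,m\}$ ensures the tail is unaffected by whether the excluded indices are $|l|<m$ or $|l|=k$, and the integral-comparison proof that $\sum_{l>M}l^{-s}<\zeta(M,s)$ --- matches the paper.
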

\begin{proof}
First note that for any $M\geq 1$ and $s\geq 2$
\begin{equation} \label{eq:estimate_1}
\sum_{k=M+1}^{\infty}\frac{1}{k^{s}}< \zeta(M,s).
\end{equation}
That can be seen from the fact that
$\sum_{k=M+1}^{\infty}\frac{1}{k^{s}}=\frac{1}{(M+1)^{s}}+\frac{1}{(M+2)^{s}}+\sum_{k=M+3}^{\infty}\frac{1}{k^{s}}<\frac{1}{(M+1)^{s}}+\frac{1}{(M+2)^{s}}+\int_{M+2}^{\infty}k^{-s}dk$.
Hence, one has that
$$
\left|2\sum_{j= L_{0}+1}^{\infty}w_{j}^{-s}\mathds 1_{n}\right|\leq_{cw}2\zeta(L_{0},s)\mathds 1_{n} .
$$
For the remaining terms note that $(|Re(\mathcal A_{j})|+|Im(\mathcal A_{j}|)\leq_{cw}\sqrt{2}|\mathcal A_{j}|\leq_{cw}\sqrt{2}\frac{\|\mathcal A\|_{s^{\star}}}{w_{j}^{s^{\star}}}\mathds 1_{n}$, then, for any $k\geq 0$, the tail part of the series can be bounded by
$$
\left|{\displaystyle \sum_{\substack{
l+j=k\\
 |l|\geq L_{k}+1
}}}
\Big(|Re(\mathcal A_{j})|+|Im(\mathcal A_{j})|\Big)w_{l}^{-s}\mathds 1_{n}\right|\leq_{cw}\sqrt{2}\|\mathcal A\|_{s^{\star}}\sum_{l=L_{k}+1}^{\infty}\left(\frac{1}{w_{k-l}^{s^{\star}}}+\frac{1}{w_{k+l}^{s^{\star}}}\right)\mathds 1_{n}w_{l}^{-s}\mathds 1_{n}
$$
$$
\leq_{cw}\frac{\sqrt{2}n\|\mathcal A\|_{s^{\star}}}{(L_{k}+1-k)^{s^{\star}-s}}\sum_{l=L_{k}+1}^{\infty}\left(\frac{1}{w_{l-k}^{s}}+\frac{1}{w_{k+l}^{s}}\right)w_{l}^{-s}\mathds 1_{n} .
$$
In the last passage we have used the fact that $L_{k}>k$, $s^{\star}\geq s$ and the relation  $\mathds 1_{n}\mathds 1_{n}=n\mathds 1_{n}$.
The result follows by applying \eqref{eq:estimate_1} once the last series has be rewritten as
$$
\begin{array}{ll}
\displaystyle{\sum_{l=L_{k}+1}^{\infty}\left(\frac{1}{w_{l-k}^{s}}+\frac{1}{w_{k+l}^{s}}\right)w_{l}^{-s}}&=\displaystyle{\sum_{l=L_{k}+1}^{\infty}\left(\frac{1}{w_{k+l}^{s}}\right)w_{l}^{-s}+\sum_{l=L_{k}-k+1}^{\infty}\left(\frac{1}{w_{k+l}^{s}}\right)w_{l}^{-s}}\\
&\leq\displaystyle{\sum_{l=L_{k}+1}^{\infty}w_{l}^{-2s}+\sum_{l=L_{k}-k+1}^{\infty}w_{l}^{-2s}}.
\end{array}
$$
\end{proof}
\vskip 5pt

{\it The bound $Z_{ M}$}

\noindent From \cite{GL1}, one has that
$$
\sum _{\substack{
k_1+k_2=k \\
|k_1|\neq k
}}  \frac{1}{w_{k_1}^{s}w_{k_2}^{s}}\leq\frac{1}{w_{k}^{s}}\left[2+2\sum_{l=1}^{M}\frac{1}{l^{s}}+\frac{2}{ M^{s-1}(s-1)}+\eta_{M}-1-\frac{1}{w_{2k}^{s}} \right],
$$
where
$$
\eta_{ k}=2\left[\frac{k}{k-1} \right]^{s}+\left[\frac{4\log(k-2)}{k}+\frac{\pi^{2}-6}{3} \right]\left[ \frac{2}{k}+\frac{1}{2}\right]^{s-2}.
$$
Recall the definition of $c_{k,1}$ and $c_{k,2}$ given in \eqref{eq:expansion}, with a more explicit form in \eqref{eq:ck_greater_m} for the case $k\ge m$. Then for $k\geq  M$ one has that 
$$
\begin{array}{rl}
|c_{k,1}|_{\infty}&\leq\sqrt{2}\|\mathcal A \|_{s^{\star}}\displaystyle{\sum_{\substack{
l+j=k\\
 |l|\neq k}}w_{j}^{-s}w_{l}^{-s}}\\
 &\displaystyle{\leq\frac{\sqrt 2\|\mathcal A \|_{s^{\star}}}{w_{k}^{s}}\left[2+2\sum_{l=1}^{ M}\frac{1}{l^{s}}+\frac{2}{M^{s-1}(s-1)}+\eta_{M}-1\right]=:\frac{\sqrt 2\|\mathcal A \|_{s^{\star}}}{w_{k}^{s}}C_{1}} , \\
	\displaystyle{|c_{k,2}|_{\infty}}&\displaystyle{\leq\frac{2n}{w_{k}^{s}}}.
 \end{array}
$$
Since for $k\geq  M$ the first term on the right hand side of \eqref{eq:splitnorm} is zero, the following estimate holds
\begin{equation}
\begin{array}{ll}
\Big|\left[DT(\bar x+ru)rv\right]_{k}\Big|_{\infty}&\leq_{cw}\Big|\left[ A\left(Df(\bar x+ru)- A^{\dag}\right)rv\right]_{k}\Big|_{\infty}\\
&\leq\|\Lambda_{k}^{-1}\|_{\infty}(|c_{k,1}|_{\infty}r+|c_{k,2}|_{\infty}r^{2}).
\end{array}
\end{equation}
Finally, combining \eqref{eq:asymp_bound} and that $k \ge M$, one gets that $\|\Lambda_{k}^{-1}\|_{\infty}\leq \frac{C_{\Lambda}}{ M}$, and we can define $Z_{ M}$ as
$$
Z_{ M}=\frac{C_{\Lambda}}{ M^{2}}(\sqrt{2}\|\mathcal A \|_{s^{\star}}C_{1}r+2n r^{2}) .
$$

\section{Computing stable and unstable tangent bundles of periodic orbits using Floquet normal forms}  
\label{sec:computing_bundles}
Consider an autonomous differential equation
\begin{equation}\label{eq:syst}
\dot y=g(y), \quad g \in C^{1}(\mathbb R^{n})
\end{equation} 
and suppose that $\gamma(t)$ is a $\tau$-periodic solution with $\gamma(0)=\gamma_{0}$. Denote by $\Gamma=\{\gamma(t),t\in[0,\tau]\}$ the support of $\gamma$ and for any $\theta\in[0,\tau]$, define $\gamma_{\theta}(t)=\gamma(t+\theta)$ the phase-shift re-parametrization of $\Gamma$. Being  autonomous, system \eqref{eq:syst} has the property that any of the curves $\gamma_{\theta}(t)$ is a $\tau$-periodic solution satisfying $\gamma_{\theta}(0)=\gamma(\theta)$. We refer to $\Gamma$ as the periodic orbit and $\gamma_{\theta}$ as the periodic solutions.

\begin{definition}[Monodromy matrix]
Let $\gamma:\mathbb R\rightarrow  \mathbb R^{n}$ be a $\tau$-periodic solution \eqref{eq:syst}
and let $\Phi_{\theta}(t)$ be the unique solution of the non-autonomous linear problem
\begin{equation}
	\left\{\begin{array}{l}
\dot \Phi_{\theta}=\nabla g(\gamma_{\theta}(t))\Phi_{\theta}\\
\Phi_{\theta}(0)=I.
\end{array}\right. 
\end{equation}
The matrix $\Phi_{\theta}(\tau)$ is called the {\em monodromy matrix} of $\gamma_{\theta}(t)$.
\end{definition}
Having chosen $\gamma(t)=\gamma_{0}(t)$, in the following we identify $\Phi(\tau)=\Phi_{0}(\tau)$. The next two Lemmas are classical results and are direct consequence of $\Phi_{\theta}(t)$ being a fundamental matrix solution. For sake of completeness, we present their proofs.

\begin{lemma}\label{lemma-mon}
For any $\theta\in[0,\tau]$, the solution $\Phi_{\theta}(t)$ of \eqref{eq:syst} satisfies
$$
\Phi_{\theta}(n \tau +t)=\Phi_{\theta}(t)\Phi_{\theta}(\tau)^{n},\ \forall t\in\mathbb R, \ \forall n\in \mathbb N . 
$$
\end{lemma}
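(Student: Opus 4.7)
The plan is to prove the lemma by induction on $n$, with the key observation being that the coefficient matrix $A_\theta(t) := \nabla g(\gamma_\theta(t))$ is itself $\tau$-periodic, since $\gamma_\theta$ is $\tau$-periodic. This periodicity is exactly what allows translation in time to produce another solution of the same linear system.

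First I would dispose of the base case $n=0$, where the identity reduces to $\Phi_\theta(t) = \Phi_\theta(t) \cdot I$. The heart of the argument is the case $n=1$, namely the identity $\Phi_\theta(\tau + t) = \Phi_\theta(t)\Phi_\theta(\tau)$. To establish it, I would introduce the two matrix-valued functions
\begin{equation*}
\Psi(t) := \Phi_\theta(t + \tau), \qquad \Xi(t) := \Phi_\theta(t)\,\Phi_\theta(\tau),
\end{equation*}
and show that both solve the same initial value problem. For $\Psi$ one differentiates and uses the $\tau$-periodicity of $A_\theta$:
\begin{equation*}
\dot\Psi(t) = \dot\Phi_\theta(t+\tau) = A_\theta(t+\tau)\,\Phi_\theta(t+\tau) = A_\theta(t)\,\Psi(t),
\end{equation*}
together with $\Psi(0) = \Phi_\theta(\tau)$. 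For $\Xi$, the constant right-multiplication by $\Phi_\theta(\tau)$ commutes with differentiation, giving $\dot\Xi(t) = A_\theta(t)\Xi(t)$ and $\Xi(0) = \Phi_\theta(\tau)$. Uniqueness of solutions of the linear matrix IVP then yields $\Psi \equiv \Xi$, which is the claim for $n=1$.

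For the inductive step, assuming $\Phi_\theta(n\tau + t) = \Phi_\theta(t)\Phi_\theta(\tau)^n$, I would apply the $n=1$ case with $t$ replaced by $n\tau + t$ (or, equivalently, apply it inside the shifted argument) to obtain
\begin{equation*}
\Phi_\theta((n+1)\tau + t) = \Phi_\theta(n\tau + t)\,\Phi_\theta(\tau) = \Phi_\theta(t)\,\Phi_\theta(\tau)^{n+1},
\end{equation*}
which closes the induction.

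There is no real obstacle here; the only subtle point is making sure to invoke the $\tau$-periodicity of $A_\theta(t) = \nabla g(\gamma_\theta(t))$ explicitly (it is this, rather than any property of $\Phi_\theta$ itself, that makes $\Psi$ solve the same ODE), and to apply uniqueness of solutions of the linear matrix IVP to conclude $\Psi = \Xi$.
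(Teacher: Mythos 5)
Your proof is correct and rests on the same two ingredients as the paper's: $\tau$-periodicity of the coefficient matrix makes the time-translate of $\Phi_\theta$ another solution of the same linear system, and uniqueness for the linear matrix IVP forces it to coincide with $\Phi_\theta(t)$ times a constant matrix. The only (cosmetic) difference is that you isolate the ODE-uniqueness argument in the $n=1$ step and then induct purely algebraically, whereas the paper runs the shifted-solution argument directly with the $n\tau$ shift at each stage.
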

\begin{proof}
Without loss of generality let us consider $\theta=0$.
By induction on $n\geq 0$. For $n=0$ the result is obvious. Suppose it holds for $n-1$.
Then
$$
\Phi(n \tau )=\Phi((n-1)\tau+\tau)=\Phi(\tau)\Phi(\tau)^{n-1}=\Phi(\tau)^{n}.
$$
Define
$$
\Psi(t)=\Phi(t+n \tau )\Phi(n \tau )^{-1} .
$$
It follows that $\Psi(0)=I$ and that
$$
\dot \Psi(t)=\dot \Phi(n \tau +t)\Phi(n \tau )^{-1}=A(n \tau +t)\Phi(n \tau +t)\Phi(n \tau )^{-1}=A(t)\Psi(t)
$$
For the uniqueness of solutions of the initial value problem, $\Psi(t)=\Phi(t)$ thus
$$
\Phi(t+n \tau )=\Phi(t)\Phi(n \tau )=\Phi(t)\Phi(\tau)^{n}, \ \forall t\in\mathbb R.
$$
\end{proof}

\begin{lemma}\label{lemma-equiv}
The matrices $\Phi_{\theta}(\tau)$ are equivalent under conjugation. In particular
\begin{equation}\label{matr-conj}
\Phi_{\theta}(\tau)=\Phi(\theta)\Phi(\tau)\Phi(\theta)^{-1} .
\end{equation}
\end{lemma}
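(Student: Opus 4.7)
The plan is to produce $\Phi_\theta(t)$ explicitly as a product built from $\Phi(t)$, and then evaluate at $t=\tau$.

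First I would compare the two matrix-valued functions $\Phi_\theta(t)$ and $\Phi(t+\theta)\Phi(\theta)^{-1}$. Both are defined for all $t$, and both equal $I$ at $t=0$. For the derivative, since $\gamma_\theta(t)=\gamma(t+\theta)$, one has $\nabla g(\gamma_\theta(t))=\nabla g(\gamma(t+\theta))$, and therefore
\begin{equation*}
\frac{d}{dt}\bigl[\Phi(t+\theta)\Phi(\theta)^{-1}\bigr]
=\nabla g(\gamma(t+\theta))\,\Phi(t+\theta)\Phi(\theta)^{-1}
=\nabla g(\gamma_\theta(t))\,\bigl[\Phi(t+\theta)\Phi(\theta)^{-1}\bigr].
\end{equation*}
Thus both $\Phi_\theta(t)$ and $\Phi(t+\theta)\Phi(\theta)^{-1}$ solve the initial value problem defining $\Phi_\theta$, and uniqueness of solutions of linear ODEs forces the identity $\Phi_\theta(t)=\Phi(t+\theta)\Phi(\theta)^{-1}$ for all $t\in\mathbb{R}$.

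Next I would specialize to $t=\tau$ to get $\Phi_\theta(\tau)=\Phi(\tau+\theta)\Phi(\theta)^{-1}$, and use Lemma~\ref{lemma-mon} with $\theta=0$ and $n=1$ (applied at time $\theta$) to rewrite $\Phi(\tau+\theta)=\Phi(\theta)\Phi(\tau)$. Substituting gives $\Phi_\theta(\tau)=\Phi(\theta)\Phi(\tau)\Phi(\theta)^{-1}$, which is \eqref{matr-conj}; the conjugacy assertion is then immediate since $\Phi(\theta)$ is invertible (being a fundamental matrix solution).

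The only potentially delicate point is verifying the identification of $\nabla g$ along the phase-shifted orbit, i.e. that the time-shifted solution $\Phi(\cdot+\theta)$ satisfies the ODE with coefficient matrix $\nabla g(\gamma_\theta(\cdot))$ rather than $\nabla g(\gamma(\cdot))$; this is a matter of bookkeeping once one notes $\gamma_\theta(t)=\gamma(t+\theta)$. All remaining manipulations are routine consequences of uniqueness of solutions to linear initial value problems and of Lemma~\ref{lemma-mon}.
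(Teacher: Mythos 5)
Your proof is correct and follows essentially the same route as the paper: both establish $\Phi_\theta(t)=\Phi(t+\theta)\Phi(\theta)^{-1}$ by appealing to the uniqueness of solutions of the linear initial value problem for the coefficient matrix $\nabla g(\gamma_\theta(t))$, and then specialize to $t=\tau$ and invoke Lemma~\ref{lemma-mon} to rewrite $\Phi(\tau+\theta)$ as $\Phi(\theta)\Phi(\tau)$. The only cosmetic difference is that the paper first identifies $\Phi(t+\theta)$ with $\Phi_\theta(t)\Phi(\theta)$ and then multiplies by $\Phi(\theta)^{-1}$, whereas you directly compare $\Phi(t+\theta)\Phi(\theta)^{-1}$ with $\Phi_\theta(t)$; these are the same argument.
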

\begin{proof}
The matrix $\tilde\Phi(t):=\Phi(t+\theta)$ is solution of the equation $\dot y=\nabla g(\gamma(t+\theta))y=\nabla g(\gamma_{\theta}(t))$, with $\tilde\Phi(0)=\Phi(\theta)$. Since $\Phi_{\theta}(t)$ is the principal fundamental solution of the previous system, 
$$
\tilde\Phi(t)=\Phi_{\theta}(t)\Phi(\theta).
$$ 
It follows
\begin{equation}\label{eq:phithetat}
\Phi_{\theta}(t)=\tilde\Phi(t)\Phi(\theta)^{-1}=\Phi(t+\theta)\Phi(\theta)^{-1},\quad \forall t. 
\end{equation}
Thus
$$
\Phi_{\theta}(\tau)=\Phi(\tau+\theta)\Phi(\theta)^{-1}=\Phi(\theta)\Phi(\tau)\Phi(\theta)^{-1}
$$
where, in the last passage, Lemma~\ref{lemma-mon} has been used.
\end{proof}

The previous result implies that all monodromy matrices $\Phi_{\theta}(\tau)$ have the same eigenvalues. That motivates the following definition.

\begin{definition}
The eigenvalues $\sigma_{j}$ of the monodromy matrix $\Phi(\tau)$ are called the {\em Floquet multipliers} of the periodic orbit $\Gamma$.
\end{definition}

As already mentioned in Section~\ref{sec:Intro}, in the theory of dynamical systems the monodromy matrix $\Phi(\tau)$ associated to a periodic solution $\gamma(t)$ plays a fundamental role since it  encompasses the information about the stability character of $\gamma$. Indeed, as shown in Proposition~2.122 in \cite{MR2224508}, the Floquet multipliers of $\gamma(t)$ are in fact the eigenvalues of $D{\bf P}(\gamma(0))$, where ${\bf P}(x)$ denotes the Poincar\'e map of $\gamma(t)$ on  a $(n-1)$-dimensional hypersurface transversal to $\gamma$ at $\gamma(0)$. 
Moreover, it can be proved that at least one of the Floquet multipliers $\sigma_{j}$ of $\Phi(\tau)$ is equal to one, corresponding to the eigenvector $\dot\gamma(0)$. Hence, we will denote by $\sigma_n=1$ the Floquet multiplier corresponding to $\dot\gamma(0)$ and denote by $\{\sigma_{j}\}_{j=1,\dots,n-1}$ the set of non trivial Floquet multipliers. We refer to Section~2.4 in \cite{MR2224508} for a more extensive analysis of the links between Poincar\'e sections and Floquet theory. Based on the above discussion, we are now ready to introduce the definition of stability of a periodic orbit. 

\begin{definition} \label{def:stability}
Let $\Gamma=\{\gamma(t),t\in[0,\tau]\}$ be a $\tau$-periodic orbit of the system \eqref{eq:syst} and let $\{\sigma_{j}\}_{j=1,\dots,n-1}$ be the corresponding set of non trivial Floquet multipliers. We say that 
\begin{itemize}
\item $\Gamma$ is {\em stable} if $\forall ~j \in \{1,\dots,n-1\}$, $|\sigma_{j}|<1$;
\item $\Gamma$ is {\em unstable} if  $\exists ~ j \in \{1,\dots,n-1\}$ such that  $|\sigma_{j}|>1$.
\end{itemize}
Moreover, if $p<n-1$ Floquet multipliers have modulus less than one, and $q<n-p$ Floquet multipliers have modulus greater than one, $\Gamma$ is said to have $p$ stable directions and $q$ unstable directions.
\end{definition}
%

Let us mention that there is a variant to the Floquet normal form introduce in Theorem~\ref{Th-Floquet}, namely there exist a constant (possibly complex) matrix $B$ and a nonsingular (possibly complex) continuously differentiable, $\tau$-periodic matrix function $P(t)$ such that $\Phi(t)=P(t) e^{Bt}$. We refer to Theorem~2.83 in \cite{MR2224508} for more details and for the proof. Therefore, there exists a (possibly complex) matrix B such that $\Phi(\tau)=e^{B\tau}$. Denoting by $\lambda_{j}$ the eigenvalues of $B$, it follows that $\sigma_{j}=e^{\tau \lambda_{j}}$ is a Floquet multiplier. Note that for a given $\sigma_{j}$, the solution $\lambda_{j}$ of $\sigma_{j}=e^{\tau \lambda_{j}}$ is not uniquely defined. Indeed for any $k \in \mathbb Z$,  $e^{\tau (\lambda_{j}+i\frac{2k\pi}{\tau})}=\sigma_{j}$. This reflects the fact that in the {\em complex Floquet normal form} $\Phi(t)=P(t)e^{Bt}$, the matrix $B$ is also not uniquely defined. In the literature it is common to call a Floquet exponent associated to $\sigma_{j}$ any complex number $\lambda_{j}$ so that $\sigma_{j}=e^{\tau \lambda_{j}}$. On the converse, for any $\sigma_{j}$ there is a unique real number $l_{j}$ so that $|\sigma_{j}|=e^{l_{j}\tau}$. That motivates the following definition.

\begin{definition} \label{def:lyapunov_exponents}
A {\em Lyapunov exponent} associated to a Floquet multiplier $\sigma_j$ is the unique real number $l_{j}$ so that $|\sigma_{j}|=e^{l_{j}\tau}$.
\end{definition}

Note that using the notion Lyapunov exponents, a definition of stability of a periodic orbit similar to the one of Definition~\ref{def:stability} can be introduced. Indeed, given a $\tau$-periodic orbit $\Gamma=\{\gamma(t),t\in[0,\tau]\}$ of \eqref{eq:syst} and considering $\{l_{j}\}_{j=1,\dots,n-1}$ to be the corresponding set of non trivial Lyapunov exponents, we say that $\Gamma$ is stable if $l_{j}<0$, $\forall~ j=1,\dots,n-1$ and that $\Gamma$ is unstable if there exists $j \in \{1,\dots,n-1\}$ such that $l_{j}>0$. 

Given a real $n \times n$ diagonalizable matrix $A$, let us introduce the notation $\Sigma(A)=\{\alpha_{k},v_{k}\}_{k=1,\dots,n}$ to denote the eigendecomposition of the square matrix $A$, i.e. $Av_{k}=\alpha_{k}v_{k}$, for all $k=1,\dots,n$. 

The following result shows how the information from the couple $(R,Q(t))$ coming from the Floquet normal form $\Phi(t)=Q(t)e^{Rt}$ can directly be used to study the dynamical properties of the periodic orbit $\Gamma$. More explicitly, it demonstrates that the stability of $\Gamma$ can be determined by the eigenvalues of $R$ while the stable and unstable tangent bundles of $\Gamma$ can be retrieved from the action of $Q(t)$ (with $t \in [0,\tau]$) on the eigenvectors of $R$. 
\begin{theorem}\label{theorem:bundle}
Assume that $\Gamma=\{\gamma(t),t\in[0,\tau]\}$ is a $\tau$-periodic orbit of \eqref{eq:syst} and consider $\Phi(t)$ the fundamental matrix solution of the non-autonomous linear equation $\dot{y}=\nabla g(\gamma(t))y$ such that $\Phi(0)=I$. Suppose that a Floquet normal form decomposition of Theorem~\ref{Th-Floquet}) $\Phi(t)=Q(t)e^{Rt}$ is known. Assume that the real $n \times n$ matrix $R$ is diagonalizable and let $\Sigma(R)=\{\mu_{j},v_{j}\}_{j=1,\dots,n}$ the eigendecomposition of $R$. Then the Lyapunov exponents $l_j$ of $\Gamma$ are given by
\begin{equation} \label{eq:lyapunov_exp}
l_{j}=Re(\mu_{j}) .
\end{equation}
Furthermore, for any $\theta\in[0,\tau]$, if one defines
\begin{equation} \label{eq:eigs_formulas}
w_{j}^{\theta} \bydef Q(\theta)v_{j},
\end{equation}
then $w_{j}^{\theta}$ is an eigenvector of $\Phi_{\theta}(\tau)$ associated to the Lyapunov exponent $l_{j}$. Note that $w_{j}^{\theta}$ is a smooth $2\tau$-periodic function of $\theta$.
\end{theorem}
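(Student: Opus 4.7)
The theorem splits into two claims: the identification $l_j = \mathrm{Re}(\mu_j)$, and the description of the corresponding eigenvectors as $w_j^\theta = Q(\theta) v_j$. The whole argument pivots on a single computation, so I would begin there. Because $Q$ is $2\tau$-periodic with $Q(0)=I$, evaluating the Floquet form at $t = 2\tau$ gives $\Phi(2\tau) = Q(2\tau)e^{2R\tau} = e^{2R\tau}$; combined with Lemma~\ref{lemma-mon} at $n = 2$, $t = 0$, this yields the key identity
\[
\Phi(\tau)^2 = e^{2R\tau}.
\]
Its spectrum is $\{e^{2\tau\mu_j}\}_{j=1,\dots,n}$, so every Floquet multiplier $\sigma_j$ satisfies $\sigma_j^2 = e^{2\tau\mu_{\pi(j)}}$ for a suitable relabelling $\pi$, hence $|\sigma_j| = e^{\tau\,\mathrm{Re}(\mu_{\pi(j)})}$, and Definition~\ref{def:lyapunov_exponents} yields exactly $l_j = \mathrm{Re}(\mu_j)$ up to reordering, proving \eqref{eq:lyapunov_exp}.

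For the second claim, I would use formula \eqref{eq:phithetat} from Lemma~\ref{lemma-equiv} together with the Floquet form to write
\[
\Phi_\theta(t) = Q(t+\theta)\, e^{Rt}\, Q(\theta)^{-1}.
\]
Evaluating at $t = 2\tau$ and using $Q(2\tau+\theta) = Q(\theta)$, this gives $\Phi_\theta(2\tau) = Q(\theta)\, e^{2R\tau}\, Q(\theta)^{-1}$, so
\[
\Phi_\theta(\tau)^2\, w_j^\theta = \Phi_\theta(2\tau)\, w_j^\theta = Q(\theta)\, e^{2R\tau}\, v_j = e^{2\tau\mu_j}\, w_j^\theta.
\]
Thus $w_j^\theta$ is an eigenvector of $\Phi_\theta(\tau)^2$ with eigenvalue $e^{2\tau\mu_j}$, and the two candidate eigenvalues of $\Phi_\theta(\tau)$ whose square equals this value, namely $\pm e^{\tau\mu_j}$, both have modulus $e^{\tau\,\mathrm{Re}(\mu_j)}$; so $w_j^\theta$ lies in a $\Phi_\theta(\tau)$-invariant direction associated to the Lyapunov exponent $l_j$. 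Smoothness and $2\tau$-periodicity of $\theta \mapsto w_j^\theta$ are inherited from the corresponding properties of $Q$.

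The main obstacle is the last step, from $w_j^\theta$ being an eigenvector of $\Phi_\theta(\tau)^2$ to being an eigenvector of $\Phi_\theta(\tau)$ itself. A direct computation gives $\Phi_\theta(\tau) w_j^\theta = e^{\tau\mu_j}\, Q(\tau+\theta)\, v_j$, which is a scalar multiple of $w_j^\theta$ only when $Q(\tau+\theta) v_j$ is proportional to $Q(\theta) v_j$, equivalently (via the identity $Q(\tau+\theta) = Q(\theta)\, e^{R\theta}\, Q(\tau)\, e^{-R\theta}$ extracted by comparing $\Phi(t+\tau)$ with $\Phi(t)\Phi(\tau)$) when $v_j$ is also an eigenvector of $Q(\tau)$. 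I would close this gap by unpacking the constraint $Q(\tau)\, e^{R\tau}\, Q(\tau) = e^{R\tau}$ --- a rearrangement of $\Phi(\tau)^2 = e^{2R\tau}$ --- and exploiting the diagonalizability of $R$ to exhibit a common eigenbasis of $Q(\tau)$ and $R$, which forces $Q(\tau) v_j = \pm v_j$ and hence $\Phi_\theta(\tau) w_j^\theta = \pm e^{\tau\mu_j}\, w_j^\theta$, an eigenvalue of modulus $e^{\tau\,l_j}$, as required.
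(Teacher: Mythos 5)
Your argument follows the same backbone as the paper's: both derive $\Phi(\tau)^2 = \Phi(2\tau) = e^{2R\tau}$ from $Q(0)=I$, $2\tau$-periodicity of $Q$, and Lemma~\ref{lemma-mon}, read off the Lyapunov exponents from the spectrum of $\Phi(\tau)^2$, and then pass through the conjugation $\Phi_\theta(2\tau) = Q(\theta)e^{2R\tau}Q(\theta)^{-1}$ to obtain the eigenvectors of the shifted monodromy matrices. Where you genuinely diverge is at the step you correctly flag as the crux: upgrading ``$w_j^\theta$ is an eigenvector of $\Phi_\theta(\tau)^2$'' to ``$w_j^\theta$ is an eigenvector of $\Phi_\theta(\tau)$.'' The paper handles this by arguing that $\Phi(\tau)$ is diagonalizable (since its square $e^{2R\tau}$ is diagonalizable and $\Phi(\tau)$ is invertible) and then asserting a one-to-one correspondence of eigenspaces; you instead reduce it to showing that each $v_j$ is an eigenvector of $Q(\tau)$, and propose to extract a common eigenbasis of $Q(\tau)$ and $R$ from the constraint $Q(\tau)e^{R\tau}Q(\tau)=e^{R\tau}$. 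Your variant buys a concrete extra piece of information the paper only mentions informally in Section~\ref{sec:zeta3_bundles}: the eigenvalue $\pm 1$ of $Q(\tau)$ on $v_j$ encodes the sign of the Floquet multiplier.

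That said, be aware that your proposed closure is left as a sketch and, as stated, cannot always go through. The constraint $Q(\tau)e^{R\tau}Q(\tau)=e^{R\tau}$ together with diagonalizability of $R$ does \emph{not} force an \emph{arbitrary} eigenbasis of $R$ to diagonalize $Q(\tau)$; it only guarantees the existence of a \emph{compatible} eigenbasis. Concretely, whenever $e^{2\mu_i\tau}=e^{2\mu_j\tau}$ for some $i\neq j$ (i.e.\ $\mu_i - \mu_j \in i\pi\Z/\tau$), the eigenspace of $\Phi(2\tau)$ is higher-dimensional and the fixed $v_j$ in the hypothesis need not be an eigenvector of $\Phi(\tau)$, nor of $Q(\tau)$. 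The paper's ``one-to-one correspondence of eigenspaces'' glosses over exactly the same resonance; a clean repair for either argument is to use that $\Phi(\tau)$ and $\Phi(2\tau)$ commute and are both diagonalizable, hence simultaneously diagonalizable, so that each $\Phi(\tau)$-invariant eigenspace of $\Phi(2\tau)$, \emph{when simple}, pins $v_j$ down as a $\Phi(\tau)$-eigenvector. Under the (implicit, generically satisfied) non-resonance assumption, your route and the paper's are both sound; without it, both need the compatible-eigenbasis caveat.
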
 
\begin{proof}
Consider the eigendecomposition $\Sigma(R)=\{\mu_{j},v_{j}\}_{j=1,\dots,n}$ of the diagonalizable matrix $R$, meaning that the set $\{v_1,\dots,v_n\}$ consists of $n$ linearly independent eigenvectors of $R$. By Lemma~\ref{lemma-mon}, one has that $\Phi(\tau)^{2}=\Phi(2\tau)$. Since $Q(t)$ is $2\tau$-periodic and $Q(0)=I$, it follows that $\Phi(2\tau)=e^{R2\tau}$. Since $R$ is diagonalizable, $\Phi(2\tau)=e^{R2\tau}$ is also diagonalizable. Since $\Phi(2\tau)=\Phi(\tau)^2$ and since the matrix $\Phi(\tau)$ is invertible and defined over the field of complex number (which has zero characteristic), then it can then be showed that $\Phi(\tau)$ is also diagonalizable.  Now, since $\Phi(2\tau)=\Phi(\tau)^{2}$ one has that if $(\sigma,w) \in \Sigma(\Phi(\tau))$, then $(\sigma^2,w) \in \Sigma(\Phi(2\tau))$. Combining this last point with $\Phi(\tau)$, $\Phi(2\tau)$ being diagonalizable implies that the eigenspaces of $\Phi(\tau)$ and $\Phi(2\tau)$ are in one-to-one correspondence. That implies the existence of a set $\{ \sigma_j\}_{j=1,\dots,n}$ such that $\Sigma(\Phi(\tau))=\{\sigma_{j},v_{j}\}_{j=1,\dots,n}$. From the property of the exponential matrix operator, $\Sigma(\Phi(2\tau))=\{e^{\mu_{j}2\tau},v_{j}\}_{j=1,\dots,n}=\Sigma(\Phi(\tau)^2)=\{\sigma_{j}^2,v_{j}\}_{j=1,\dots,n}$. This implies that $\sigma_{j}^2=e^{\mu_{j}2\tau}$ for any $j=1,\dots,n$. Note that $l_j = Re(\mu_j)$ is the unique real number so that $|\sigma_{j}|=e^{l_j\tau}$. Hence, $l_j$ is a Lyapunov exponent associated to the Floquet multipliers $\sigma_j$.

Now, from \eqref{eq:phithetat}, one has that 
$$
\Phi_{\theta}(2\tau)=\Phi(2\tau+\theta)\Phi(\theta)^{-1}=Q(\theta)e^{(2\tau+\theta)R}e^{-R\theta}Q(\theta)^{-1},\quad \forall \theta\in [0,\tau]
$$
thus
$$
\Phi_{\theta}(2\tau)Q(\theta)v_{j}=Q(\theta)e^{2 \tau R}v_{j}=e^{2 \tau \mu_{j}}Q(\theta)v_{j}
$$
showing that $\Sigma(\Phi_{\theta}(2 \tau ))=\{e^{2 \tau \mu_{j}},Q(\theta)v_{j}\}$. Applying the same argument than above, one can conclude that 
$\Sigma(\Phi_\theta(\tau))=\{\sigma_{j},Q(\theta)v_{j}\}_{j=1,\dots,n}$ forms an eigendecomposition of the matrix $\Phi_{\theta}(\tau)$. Hence, 
$w_{j}^{\theta}=Q(\theta)v_{j}$ is an eigenvector of $\Phi_{\theta}(\tau)$.
By the smoothness and the $2\tau$-periodicity of the matrix function $Q(\theta)$, one can conclude that $w_{j}^{\theta}=Q(\theta)v_{j}$ is also a smooth $2\tau$-periodic function of $\theta$.
\end{proof}



Recall \eqref{eq:eigs_formulas} and consider $w_{j}^{\theta}=a_{j}^{\theta}+i b_{j}^{\theta}$. We define the stable and unstable subspaces $E_{s}^{\theta}, E_{u}^{\theta}\subset T_{\gamma(\theta)}\mathbb R^{n} $
 of the periodic orbit $\Gamma$ at the point $\gamma(\theta)$ as
$$
\begin{array}{l}
E_{s}^{\theta}=Span\big\{a_{i}^{\theta},b_{i}^{\theta}: |\sigma_{j}|<0 \big\}\\
\\
E_{u}^{\theta}=Span\big\{a_{i}^{\theta},b_{i}^{\theta}: |\sigma_{j}|>0 \big\} .
\end{array}
$$
That allows us to define the following
\begin{definition} \label{def:bundles}
We define the {\em stable} and {\em unstable tangent bundles} of $\Gamma$ respectively by
$$
E_{s},E_{u}\subset T_{\Gamma} \mathbb R^{n}
$$
$$
E_{s}=\bigcup_{\theta\in[0,\tau]}\{\gamma(\theta)\}\times E_{s}^{\theta},\quad E_{u}=\bigcup_{\theta\in[0,\tau]}\{\gamma(\theta)\}\times E_{u}^{\theta}.
$$
\end{definition}
It is important to remark that from the conclusion of Theorem~\ref{theorem:bundle}, the complete structure of the stable and unstable bundles can be recovered by the action of the matrix function $Q(t)$ on the eigenvectors of $R$, which themselves correspond to the stable and unstable directions at the point $\gamma(0)$ on  $\Gamma$. Also, the proof of Theorem~\ref{theorem:bundle} is constructive in the sense that combined with the rigorous computational method of Section~\ref{sec:rig_comp}, it provides a computationally efficient direct way to obtain the eigenvectors $w_{j}^{\theta}$ of $\Phi_\theta(\tau)$, which are the ingredients defining the bundles of Definition~\ref{def:bundles}. Note that one could be tempted to use the fact that $\Phi(\tau)=Q(\tau)e^{R \tau}$ and then attempt to compute the eigendecomposition of $\Phi(\tau)$ directly. However, that would imply having to compute the exponential of an interval valued matrix, which turns out to be a difficult task (e.g. see \cite{goldsztejn}, \cite{MR960775}). This being said, the rigorous computation of the eigendecomposition of the interval matrix $R$ is not completely straightforward. We addressed this problem by adapting the computational method based on the radii polynomials in order to enclose all the solution $\{\mu_{k}, v_{k}\}$ of the nonlinear problem $(R-\mu I)v=0$ with constrain $|v|^{2}=1$. Further details on the enclosure of the eigendecomposition of interval matrices are postponed on a future work of the same authors \cite{enclosure}.

\section{Applications} \label{sec:applications}

In this section, we present some applications, where we construct rigorously tangent stable and unstable bundles of some periodic orbits of the Lorenz equations in Section~\ref{sec:lorenz_bundles} and of the $\zeta^3$-model in Section~\ref{sec:zeta3_bundles}. Note that all rigorous computations were performed in {\em Matlab} with the interval arithmetic package {\em Intlab} \cite{Ru99a}.

\subsection{Bundles of periodic orbits in the Lorenz equations} \label{sec:lorenz_bundles}
Consider the following three dimensional system of ODEs, known as the Lorenz equations 
\begin{equation}\label{syst:lor}
\left\{
\begin{array}{l}
\dot u_{1}=\sigma (u_{2}-u_{1})\\
\dot u_{2}=\rho u_{1}-u_{2}-u_{1}u_{3}\\
\dot u_{3}=u_{1}u_{2}-\beta u_{3}
\end{array} \right.
\end{equation}
with the classical choice of parameters  $\beta=8/3,\sigma=10$ and $\rho$ left as a bifurcation parameter. Suppose to have rigorously proved the existence of  a real $\tau_{\gamma}$-periodic solution $\gamma(t)=[\gamma^{1},\gamma^{2},\gamma^{3}](t)$ of \eqref{syst:lor} in the form
\begin{equation} \label{eq:gamma_lorenz}
\gamma^{j}(t)=\sum_{k\in \mathbb Z}\xi_{k}^{j}e^{ik\frac{2\pi}{\tau_{\gamma}}t},\quad j=1,2,3 
\end{equation}
in a ball of radius $r_{\gamma}$ and centered at $[\bar \tau_{\gamma}, \bar \xi_{k}]$, $|k|\leq M_{\gamma}$, with respect to the $\Omega^{s^{\star}}$ norm,
meaning that
\begin{equation}\label{eq:boundsxi}
\begin{array}{c}
|\tau_{\gamma}-\bar \tau_{\gamma}|\leq r_{\gamma}\\
\\
|Re(\xi_{k})-Re(\bar \xi_{k})|_{\infty}\leq r_{\gamma}w_{k}^{-s^{\star}},\quad |Im(\xi_{k})-Im(\bar \xi_{k})|_{\infty}\leq r_{\gamma}w_{k}^{-s^{\star}},\quad |k|=0,\dots, M_{\gamma}\\
\\
|Re(\xi_{k})|_{\infty}\leq r_{\gamma}w_{k}^{-s^{\star}},\quad |Im(\xi_{k})|_{\infty}\leq r_{\gamma}w_{k}^{-s^{\star}},\quad |k|>M_{\gamma}
\end{array}
\end{equation}
for a decay rate $s^{\star}\geq 2$. Note that $\xi_{k}\in \mathbb Z^{3}$ and $\xi_{-k}=\mathcal C(\xi_{k})$. The existence of such solution could be achieved by applying a modified version of the method discussed in the previous section. Even with some technical differences, the philosophy  is the same. Rewrite the system of ODEs as a infinite dimensional algebraic system where $\tau_{\gamma}$ and the Fourier coefficients $\xi_{k}$ are the unknowns, then consider a finite dimensional projection and compute a numerical approximate solution $\bar \tau_{\gamma}, (\bar \xi_{k})_k$. Then, by means of the radii polynomials, prove the existence, in a suitable Banach space, of a genuine solution $\tau_{\gamma},(\xi_k)_k$ of the infinite dimensional problem in a small ball containing the approximate solution. Note that this is not the first time that the radii polynomials are used to prove existence of periodic solutions of differential equations (e.g. see \cite{MR2443030}, \cite{KL}, \cite{time_periodic_PDEs}, \cite{MR2630003}, \cite{MR2592879}).

In the following we aim to combine the rigorous computational method of Section~\ref{sec:rig_comp} together with Theorem~\ref{theorem:bundle} to rigorously compute the stable and unstable tangent bundles of the periodic orbit $\gamma(t)$ given by \eqref{eq:gamma_lorenz}. This first requires the computation of the fundamental matrix solution of the linearized system along $\gamma(t)$, that is the solution  for $t\in[ 0,\tau_{\gamma}]$ of the non-autonomous system
\begin{equation}\label{syst:y}
\left\{\begin{array}{l}
\dot \Phi=\nabla g(\gamma(t))\Phi\\
\Phi(0)= I
\end{array}
\right.
\end{equation}
where $g$ is the right hand side of \eqref{syst:lor}, $\nabla g$ denotes the Jacobian of the right hand side of system \eqref{syst:lor} and $I$ is the $3\times 3$ identity matrix. The former system is nothing more than a particular case of \eqref{eq:syst-A}, where $A(t)=\nabla g(\gamma(t))$ and $n=3$. We now apply the computational method presented in Section~\ref{sec:rig_comp} to compute the principal fundamental matrix solution of the non-autonomous linear system $\dot{y}=\nabla g(\gamma(t)) y$. 
In particular a constant matrix $R$ and the Fourier coefficients $Q_{k}$ of a $2 \tau _{\gamma}$-periodic function $Q(t)$ will be computed, so that 
$$
\Phi(t)=Q(t)e^{Rt}
$$
is the unique solution of \eqref{syst:y}. Once the computation of $R$ and the $Q_k$ is done, following the conclusion of Theorem~\ref{theorem:bundle}, we will compute $\Sigma(R)=\{ (\mu_j,v_j)~|~j=1,\dots,n\}$, derive from the Lyapunov exponents $l_j \bydef Re(\mu_j)$ the stability of the periodic orbit $\Gamma$ and from the eigenvectors $\{v_1,\dots,v_n\}$ of $R$ we will construct the tangent bundles as defined in Definition~\ref{def:bundles} and given by the formula \eqref{eq:eigs_formulas}.

\vskip 5pt

{\it Computation of $R$ and $Q_{k}$}

\noindent To begin with, let us explicitly write the Jacobian 
 $$
 \nabla g(u)=\left[ \begin{matrix} 
      -\sigma & \sigma & 0 \\
      \rho-u^{3} & -1 & -u^{1} \\
      u^{2} & u^{1} & -\beta
   \end{matrix}\right]
 $$
 and, as consequence, the coefficients $\mathcal A_{k}$
 
 $$
 \mathcal A_0=\left[ \begin{matrix}
      -\sigma & \sigma & 0 \\
      \rho-\xi_{0}^3 & -1 & -\xi^1_0 \\
      \xi^2_0 & \xi^1_0 & -\beta
   \end{matrix}\right]
 , \quad \mathcal A_k=\left[ \begin{matrix} 
      0 & 0 & 0 \\
      -\xi^3_k & 0 & -\xi^1_k \\
      \xi^2_k & \xi^1_k & 0
   \end{matrix}\right], \quad k\geq 1.
 $$
 The hypothesis \eqref{eq:boundsxi} for $\xi_{k}$ to lie in a ball centered at $\bar \xi_{k}$ implies that $\|\mathcal A\|_{s^{\star}}<\infty$. Although this bound is sufficient to proceed with the computational process, we want to stress out that precise informations are known about the $|\mathcal A_{k}|_{\infty}$ of the tail elements  of the sequence $\{\mathcal A_{k}\}$. Indeed it can be easily seen  that
 \begin{equation}\label{eq:boundAk}
 |\mathcal A_{k}|_{\infty}\leq \sqrt 2 r_{\gamma}\frac{1}{w_{k}^{s^{\star}}},\quad  \forall k>M_{\gamma} .
 \end{equation}
 The computation of the approximate solution $\bar R$, $\bar Q_{k,1}$, $\bar Q_{k,2}$ has been addressed as follow: consider the approximation $\bar \gamma(t)=\sum_{|k|\leq M_{\gamma}}\bar \xi_{k}e^{ik2\pi t/ \bar \tau_{\gamma}}$ of the periodic orbit $\gamma(t)$ and numerically solve  system \eqref{syst:y} up to time $2\bar \tau_{\gamma}$. Denote by $\bar y(2\bar \tau_{\gamma})$ the obtained result and  numerically compute  
 $$
 \mathcal R=\log(\bar y(2\bar \tau_{\gamma})).
 $$ 
 Neglect the imaginary part and consider only the real part.
 Then numerically integrate  the  system \eqref{eq:ivp} up to time $2\bar \tau_{\gamma}$ with $\mathcal R$ in place of $R$ yielding the solution $\mathcal Q(t_{j})$.
 Fix the positive finite dimensional parameter $m$ and compute  from $\mathcal Q(t_{j})$ the matrices  $\mathcal Q_{k,1}$, $\mathcal Q_{k,2}$, respectively  the real and imaginary part of the Fourier coefficients with $|k|<m$.  Finally the vector $(\mathcal R, \mathcal Q_{k})$ is considered as starting point for a Newton iteration scheme applied on the finite dimensional reduction defined generally in \eqref{eq:finite_dim_reduction}. Denote the output of the iterative process by $\bar{x}=(\bar R, \bar Q_{k})$, that is an approximated solution $f^{(m)}(\bar x)\approx 0$ up to a desired accuracy, where $f^{(m)}$ is defined in \eqref{eq:finite_dim_reduction}.
 
Consider $\Lambda_k$ given by \eqref{eq:Lamnda_k}. Note that in the case of the three-dimensional vector field \eqref{syst:lor}, $\Lambda_k$ is a $6 \times 6$ matrix and one could compute its inverse analytically using the mathematical software {\em Maple}. 
After having computed $\Lambda_{k}^{-1}$ one needs to check that the chosen $m$ satisfies $m>K$ where $K$ is the same as in Lemma~\ref{lemma:Lambda_invertible}, otherwise increase $m$.
 
 Then for a choice of $M>m$ and $2\leq s\leq s^{\star}$ one can compute the coefficients $Y_{k}$ and $Z_{k}$, $k=0,\dots ,M$  and $Y_{M}, Z_{M}$ as shown in Section~\ref{sec:construction}. It only remains to define the computational parameters $L_{k}$ introduce in \eqref{eq:Lk}. In the computation presented here $L_{k}$ has been chosen as 
 \begin{equation}\label{eq:choiceLk}
 L_{k}=\max\{M+M_{\gamma}\}+k .
 \end{equation}
 This choice assures that the tail elements $H_{0}$, $H_{k}$ in \eqref{eq:boundH} only contain the terms $\mathcal A_{j}$ satisfying $|\mathcal A_{j}|_{\infty}\leq\sqrt{2}r_{\gamma}w_{j}^{-s^{\star}}$. Therefore the subsequent estimate for $h_{k}$ can be improved by replacing $\|\mathcal A\|_{s^{\star}}$ with $r_{\gamma}$.
 
Again, the knowledge of the particular behavior of the coefficients $\mathcal A_{k}$ allows to provide a better estimate for $Z_{M}$.
Indeed note that $|Re(\mathcal A_{k})|\leq_{cw}  |Re(\bar{\mathcal{A}}_{k})|+w_{k}^{-s^{\star}}\mathds 1_{n}$, where $\bar{\mathcal A}_{k}$ denotes the matrix $\mathcal A_{k}$ with the entries $\bar \xi$ in place of $\xi$ and the same holds for $|Im(\mathcal A_{k})|$. Therefore $|Re(\mathcal A_{k})|_{\infty}+|Im(\mathcal A_{k})|_{\infty}<\sqrt{2}|\bar \xi_{k}|_{\infty}+2r_{\gamma}w_{k}^{-s^{\star}}$ for $1\leq|k|\leq M_{\gamma}$ and \eqref{eq:boundAk} for $|k|>M_{\gamma}$.
 
 Therefore, the computation of the bound for $|c_{k,1}|_{\infty}$ when $k\geq M$, necessary  for the definition of $Z_{M}$, has been slightly  modified as follows.
 \begin{equation}
 \begin{array}{ll}
 |c_{k,1}|&=\left|{\displaystyle \sum_{\substack{
l+j=k\\
 |l|\neq k
 }}}
\Big(|Re(\mathcal A_{j})|+|Im(\mathcal A_{j})|\Big)w_{l}^{-s}\mathds 1_{n}\right|\leq_{cw}{\displaystyle \sum_{\substack{
l+j=k\\
j\neq 0,2k
 }}}\Big(|Re(\mathcal A_{j})|+|Im(\mathcal A_{j})|\Big)w_{l}^{-s}\mathds 1_{n}\\
 &\leq{\displaystyle \sum_{\substack{
l+j=k\\
j\neq 0\\
|j|\leq M_{\gamma}
 }}}\Big(|Re(\bar{\mathcal A_{j}})|+|Im(\bar{\mathcal A_{j}})|\Big)w_{l}^{-s}\mathds 1_{n}+2r_{\gamma}{\displaystyle \sum_{\substack{
l+j=k\\
|l|\neq k
 }}}w_{j}^{-s}w_{l}^{-s}\mathds 1_{n}\mathds 1_{n} .
  \end{array}
 \end{equation}
 Then, passing to the infinity absolute value, for any $k\geq M$ 
  \begin{equation}
 \begin{array}{ll}
|c_{k,1}|_{\infty} &\leq_{cw}3\sqrt{2}{\displaystyle \sum_{j=1}^{M_{\gamma}}}|\bar \xi_{j}|_{\infty}(w_{k-j}^{-s}+w_{k+j}^{-s})+2nr_{\gamma}{\displaystyle \sum_{\substack{
l+j=k\\
|l|\neq k
 }}}w_{j}^{-s}w_{l}^{-s}\\
 &\leq_{cw}{\displaystyle \frac{3}{k^{s}}\left[\sqrt{2}\sum_{j=1}^{M_{\gamma}}|\bar \xi_{j}|_{\infty}k^{s}(w_{k-j}^{-s}+w_{k+j}^{-s})+ 2r_{\gamma}\left[1+2\sum_{l=1}^{ M}\frac{1}{l^{s}}+\frac{2}{M^{s-1}(s-1)}+\eta_{M}\right]  \right]}\\
\noalign{\vskip 2pt}&\leq_{cw}{\displaystyle \frac{3}{k^{s}}\left[\sqrt{2}\sum_{j=1}^{M_{\gamma}}|\bar \xi_{j}|_{\infty}\left(\frac{1}{\left(1-\frac{j}{M}\right)^{s}}+1\right)+ 2r_{\gamma}\left[1+2\sum_{l=1}^{ M}\frac{1}{l^{s}}+\frac{2}{M^{s-1}(s-1)}+\eta_{M}\right]  \right]}.
 \end{array}
 \end{equation}
 
 {\it Computational results}

For the choice $\sigma=10 , \beta=8/3$ it is known that there exists  a branch of periodic solutions parametrized by $\rho$ joining a Hopf bifurcation at $\rho=\frac{470}{19}\approx 24.736 $ and a homoclinic point at $\rho\approx 13.9265 $.  Figure~\ref{Fig:orbits}(a-b) shows the bifurcation graph and some of the periodic orbit  of the continuous family.

\begin{figure}
\centering
\subfigure[]{
\hspace{-5pt}\includegraphics[width=0.33\textwidth, height=0.33\textheight]{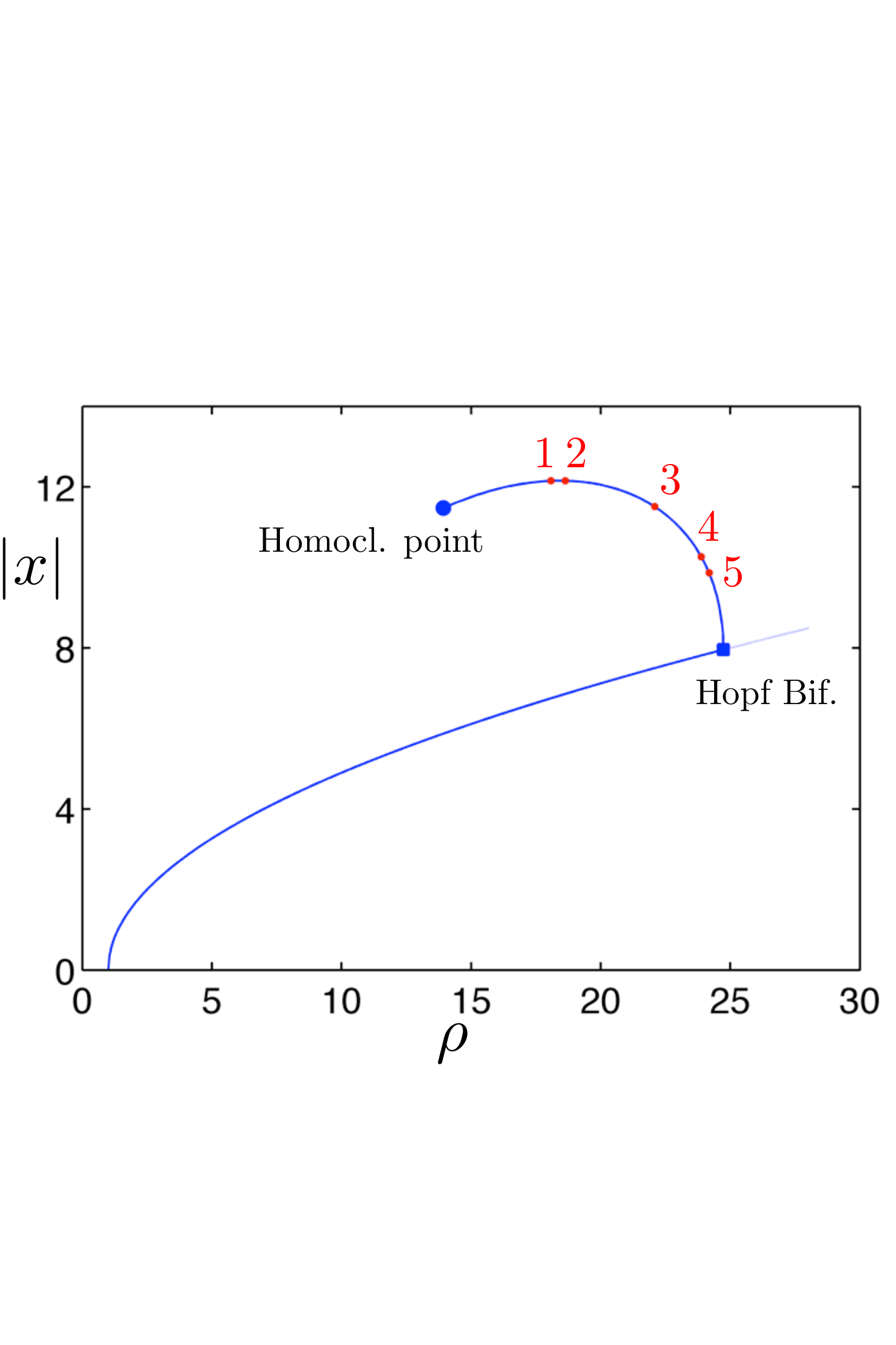}
\label{Fig:orbitsa}}
\subfigure[]{
\includegraphics[width=0.3\textwidth, height=0.3\textheight]{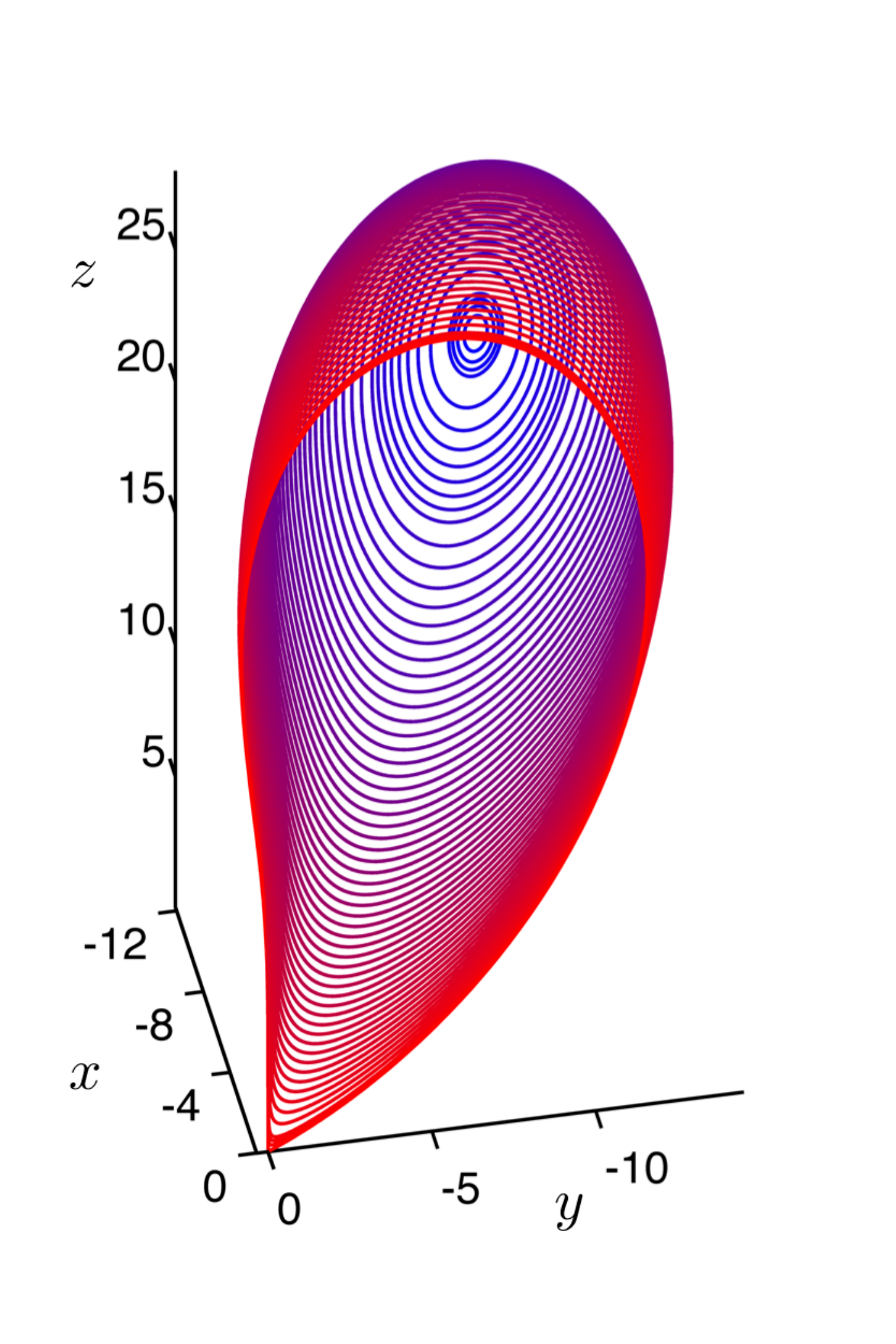}
\label{Fig:orbitsb}}
\subfigure[]{
\includegraphics[width=0.3\textwidth, height=0.3\textheight]{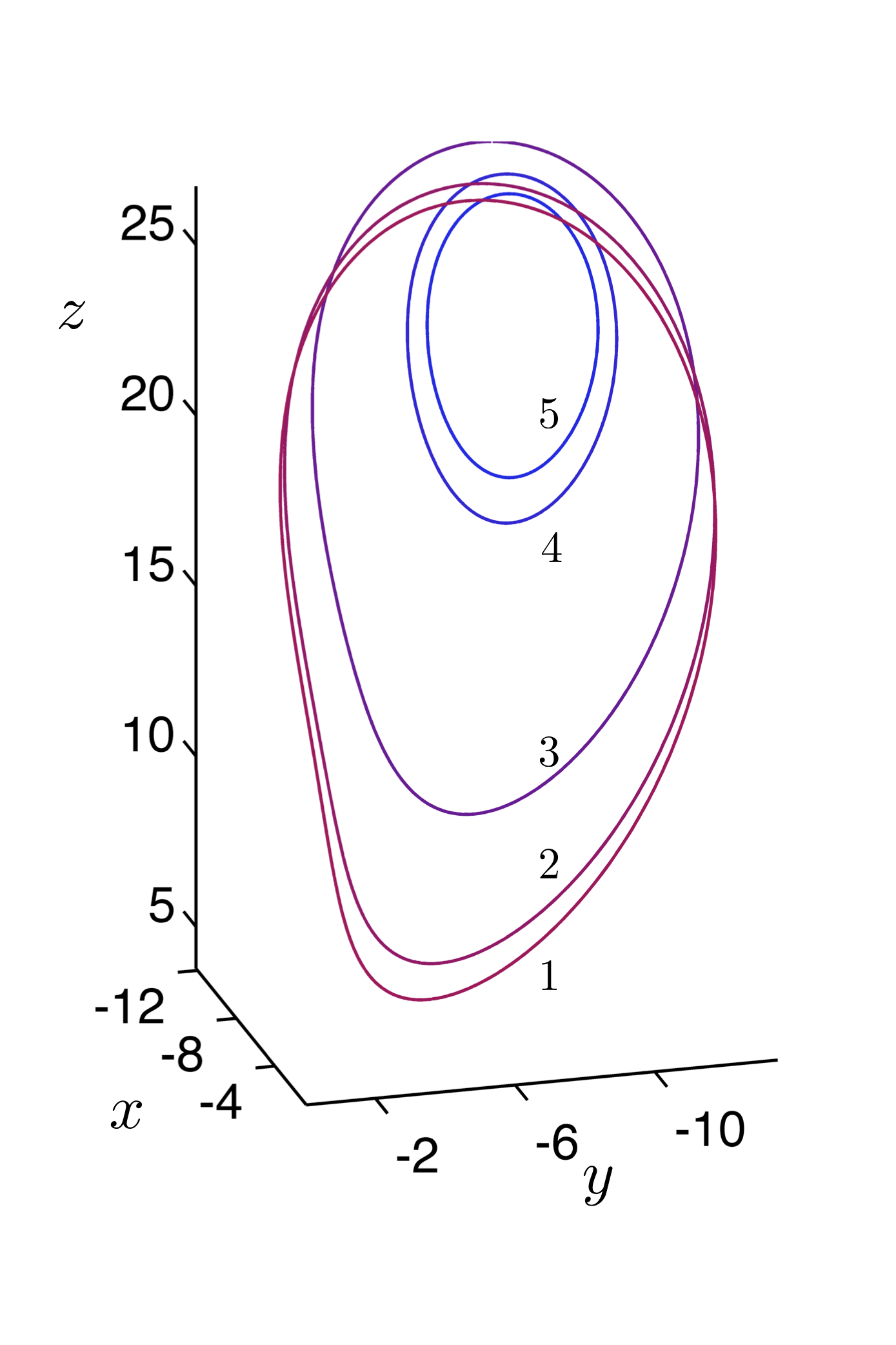}
\label{Fig:orbitsc}}
\caption{{\small (a) The partial bifurcation diagram for the Lorenz system. The labelled points correspond to the values $\rho_{i}$. (b) Some of the periodic orbits on the family joining the Hopf Bifurcation and the homoclinic point. (c) The periodic solutions corresponding to $\rho=\rho_{i}$.}   }
\label{Fig:orbits}
\end{figure}
The computation of the rigorous enclosure of the periodic orbits and successively of their tangent bundles  have been performed for a set of different periodic orbits of the Lorenz system lying on the mentioned bifurcation branch and corresponding to values of $\rho$
$$
\begin{array}{c}
\rho_{1}=18.0815,\ \rho_{2}=18.6815\\
\\
\rho_{3}=20.8815,\ \rho_{4}=23.8815,\ \rho_{5}=24.1816\\
\end{array} 
$$
Figure~\ref{Fig:orbitsc} reports the graphics of the numerical approximation $\bar \gamma_{i}$ of the orbits corresponding to the choice  $\rho_{i}$, while the Table \ref{table:periodic} contains the computational parameter $M_{\gamma}$  that have been chosen for the  rigorous enclosure of the orbit, the period $\bar \tau_{\gamma}$ and the radius  $r_{\gamma}$ resulting from the computations. The growth  rate $s^{\star}$ has been fixed $s^{\star}=2$ for all the cases.

\begin{table}[htdp]
\begin{center}
\begin{tabular}{c|c|c|c}
\# \ {\rm sol}& $M_{\gamma}$ & $\bar \tau_{\gamma}$ &$ r_{\gamma}$\\
\hline
1& 32  & 1.027854 & $6.844864508150837e-09$ \\
2 & 30 & 0.978271 &$7.151582969846857e-09$\\
3& 26 &0.822883 & $4.260379031142465e-09 $\\
4 & 20 & 0.683813 & $5.368959115576269e-09$\\
5& 30 & 0.672595 &$2.360935240171144e-08$\\
\end{tabular}
\end{center}
\caption{{\small The $i$-th row concerns the computation of the periodic orbit for the Lorenz system corresponding to $\rho=\rho_{i}$. $M_{\gamma}$ is the finite dimensional reduction parameter chosen in the computation,  $\bar \tau_{\gamma}$ the approximated period of the solution and $r_{\gamma}$ the resulting enclosing radius.}  }
\label{table:periodic}
\end{table}
In the Appendix the first 15 Fourier coefficients of $\bar \gamma_{1}$ and $\bar\gamma_{4}$ are listed. As shown in Figure~\ref{Fig:decayxik}, one can notice that the Fourier coefficients of the five orbits under consideration are decaying to zero with a different speed. This is due to the fact that the closer we are to the homoclinic orbit, the {\em flatter} the periodic solution is, meaning that a larger number of Fourier coefficients contributes to the Fourier expansion, hence leading to a slower decay.
\begin{figure}
\centering
\includegraphics[scale=0.5]{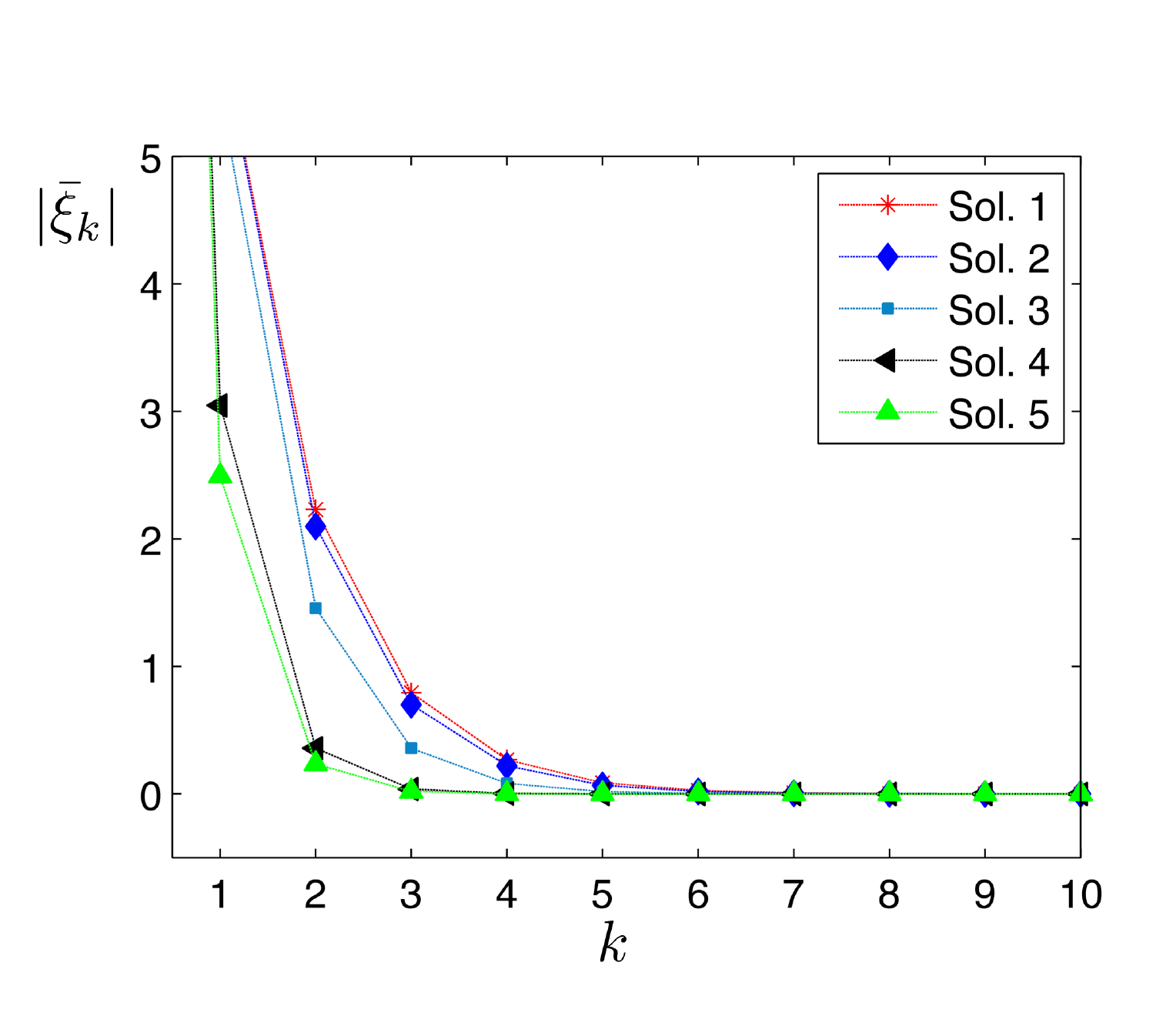}
\vspace{-10pt}\caption{{\small Norm of the Fourier coefficients of each of the periodic solutions $\gamma_{i}$}}
\label{Fig:decayxik}
\end{figure}
Table \ref{Table:floquet} contains information about the computation of the fundamental matrix solution associated to each of the previous periodic orbits. More precisely, it contains the finite dimensional reduction parameter $m$, the computational parameter $M$ and the resulting radius $r$.
\begin{table}[ht]
\begin{center}
\begin{tabular}{c|c|c|c}
\# \ {\rm sol}& m & M & r\\
\hline
1& 100 & 180 & $1.98645943e-05$\\
2 & 90 & 140 & $7.52145121e-06$\\
3& 80 & 80 & $9.66152623e-07 $\\
4 & 60 & 66 & $9.91268997e-07$  \\
5& 60 & 70 & $3.77687574e-06$\\
\end{tabular}
\end{center}
\caption{{\small Computing the fundamental matrix solution for each of the periodic orbit $\gamma_{i}$. $m$ and $M$ are respectively the finite dimensional reduction  parameter and the computational parameter that have been chosen. $r$ is the radius of the ball centered approximate solution in $\Omega^{s}$ within which a genuine solution of  \eqref{eq:four} exists.}}
\label{Table:floquet}
\end{table}%

Some of the radii polynomials $p_{k}(r)$ built during  the computation of solution $\#4$ have been plotted in Figure~\ref{Fig:radii}. The bold line on the $x$-axis remarks the interval
$$
INT =[ 9.91268997\cdot 10^{-7} ,\quad 1.4574858482\cdot 10^{-3}]
$$
where all the radii polynomials are negative.
\begin{figure}
\centering
\includegraphics[width=0.43\textwidth,height=0.21\textheight]{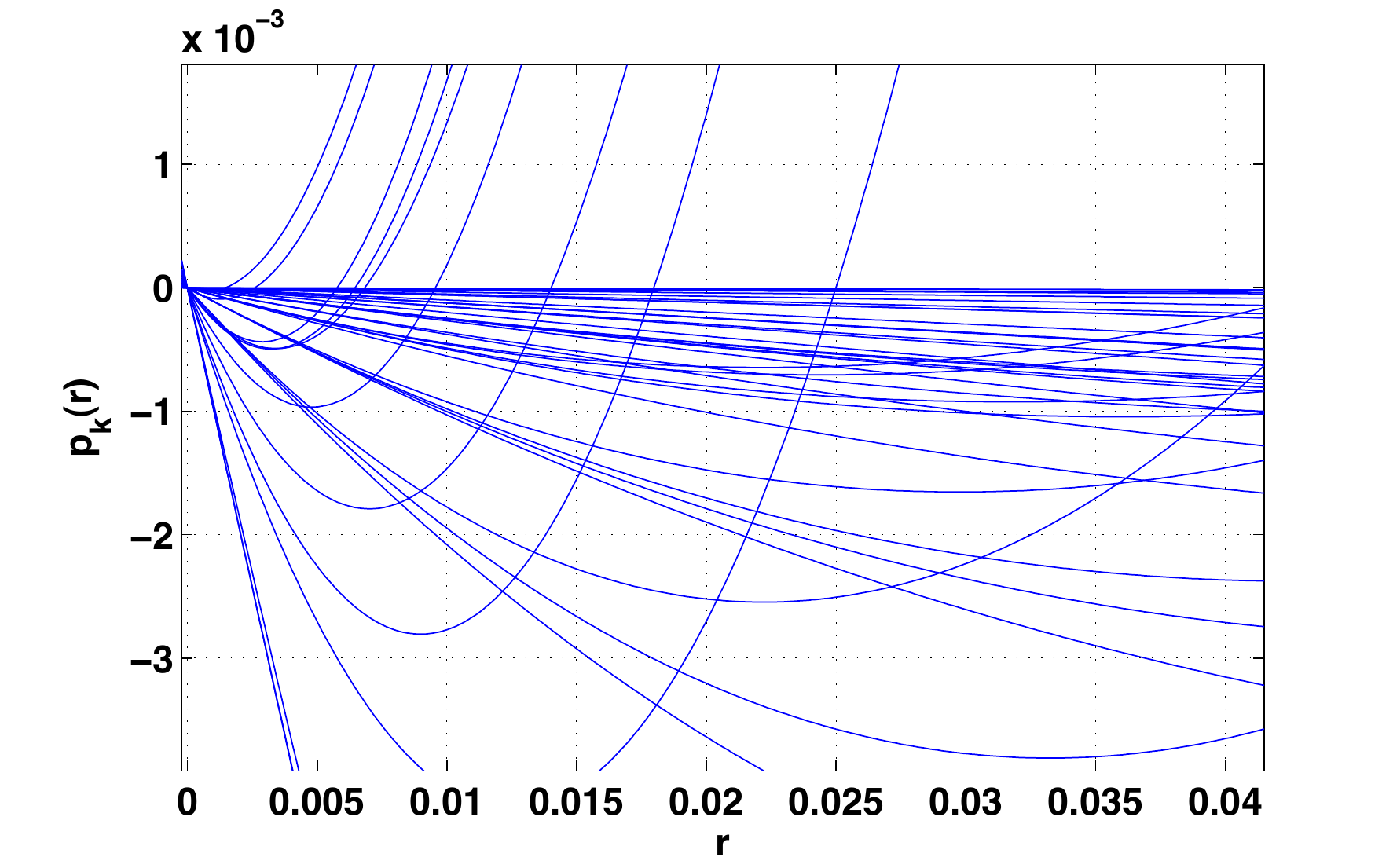}
\vspace{-3pt }\includegraphics[width=0.43\textwidth,height=0.21\textheight]{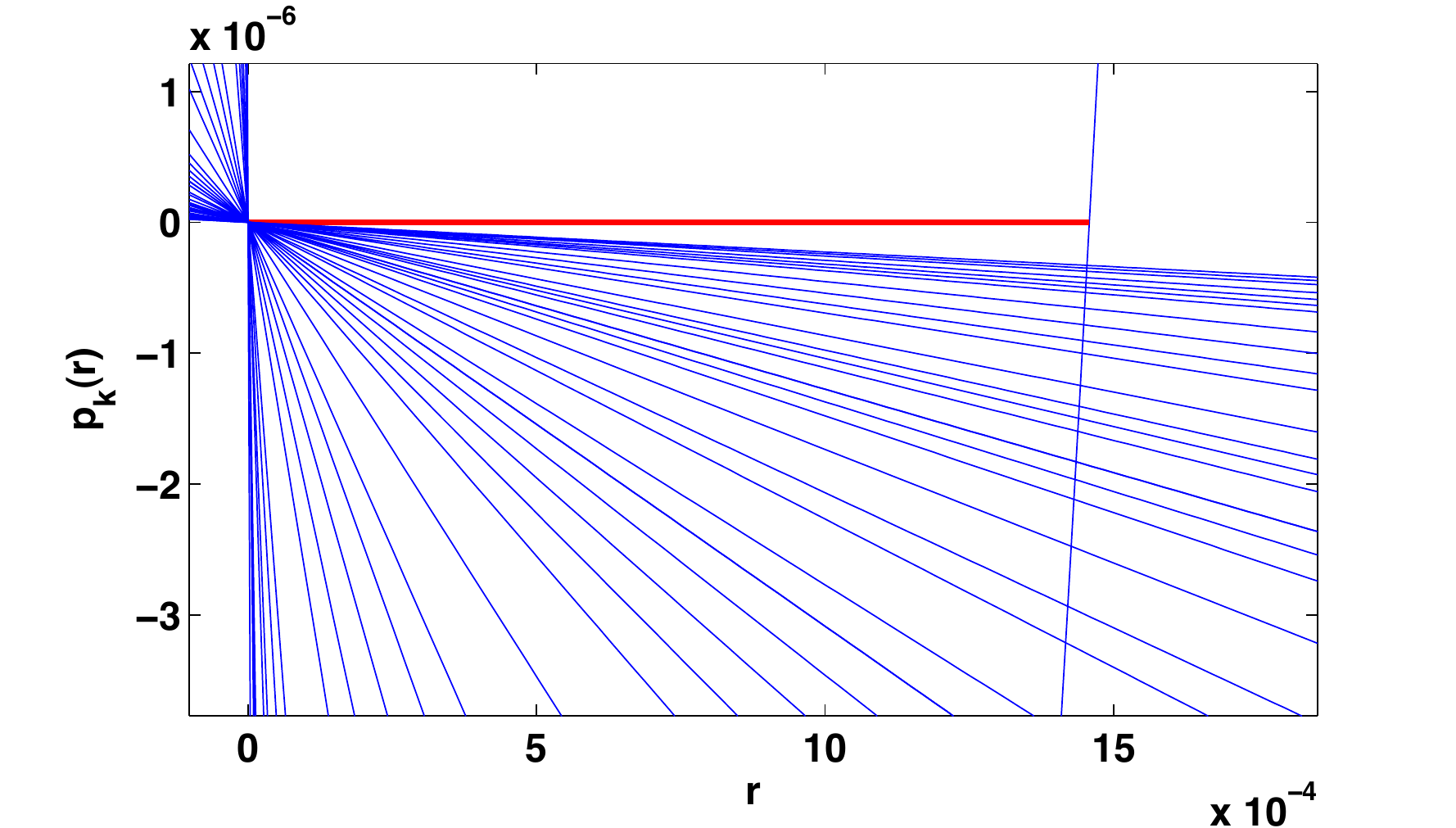}
\caption{{\small Plot of some of the radii polynomials $p_{k}(r)$ constructed in the computation of the fundamental matrix solution associated to $\gamma_{4}$. On the right: magnification close to $r=0$. The red line denotes the interval $INT$ where all the $p_{k}(r)$ are negative.  }}
\label{Fig:radii}
\end{figure}

From the computations we noticed that  the odd Fourier coefficients of $ Q(t)$ are almost vanishing, suggesting that $Q(t)$ is a $\tau_{\gamma}$ periodic function, rather than $2\tau_{\gamma}$ periodic.  This is not in contradiction with Floquet Theorem. Again in the Appendix we report the numerical approximation $\bar R$ and the first even Fourier coefficients $\bar Q_{k}$ for the solution $\#1$ and solution $\#4$.  As in the previous case, the Fourier coefficients $\bar Q_{k}$ corresponding to periodic orbits closer to homoclinic decrease slower. This justifies the fact that larger values of $m$ and $M$ were necessary to obtain successful computations.  

We now have all the ingredients necessary to construct  the tangent bundles: first we compute the intervals  containing  the spectrum and  the eigenvectors of each the interval value matrix $R$, then,  in light of Theorem \ref{theorem:bundle}, the multiplication of the   stable and unstable directions with the function $Q(\theta)$ yields the  {\it tube}  enclosing  the complete stable and unstable bundles. 

Table \ref{table:bundle} lists the Lyapunov exponents of the periodic orbits, as defined in Definition~\ref{def:lyapunov_exponents}, and it also contains the radius of the intervals enclosing the stable and unstable eigen-couple of $R$ while in Figure~\ref{fig:bundles} the tangent bundles are depicted.  In Appendix the complete list of the eigen-decomposition of the interval matrices $R$ is also provided.

\begin{table}[htdp]
\begin{center}
\begin{tabular}{r|rr}
 Sol \# & {\rm Center} & {\rm Radius} \\
 \hline 
 1& -14.2953855130260  &$ 6.801248614\cdot 10^{-5}$\\
  & 0.6287188463595  & $9.510853040\cdot 10^{-4}$\\
 \hline2& -14.2174898849454 &$ 2.814282939\cdot 10^{-5}$\\ 
 & 0.5508232182790 & $3.733964559\cdot 10^{-4}$\\ 
 \hline 3&-13.9620493680589& $2.774785811 \cdot 10^{-6}$ \\
  & 0.2953827013923 & $3.653168667 \cdot 10^{-5}$\\
  \hline 4& -13.7210150091049 & $2.544262339\cdot 10^{-6}$\\
  &  0.0543483424385& $5.248456341\cdot 10^{-5}$\\  
   \hline 5&-13.7013292393391 & $9.720262854 \cdot 10^{-6}$\\
   & 0.0346625726730 & $3.336819199  \cdot 10^{-4}$\\
\end{tabular}
\end{center}
\caption{{\small Lyapunov exponents for each of the periodic orbit $\gamma_{i}$. For each solution we report the center and the radius of the interval vectors enclosing the exponents. Note that we could prove the existence of the eigenvectors $v_j$ associated to $\mu_j$ within the same accuracy given by $r$.}}
\label{table:bundle}
\end{table}

\begin{figure}
\centering
\vspace{-10 pt}
\subfigure[]{
\includegraphics[width=0.32\textwidth]{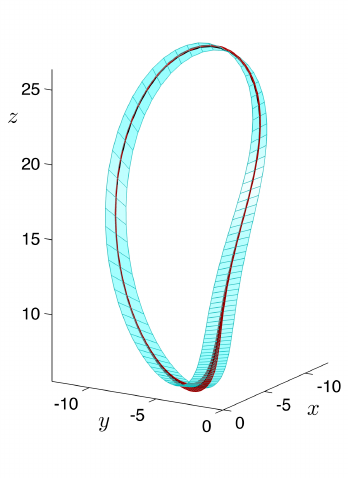}
}\hspace{60 pt}
\subfigure[]{
\includegraphics[width=0.32\textwidth]{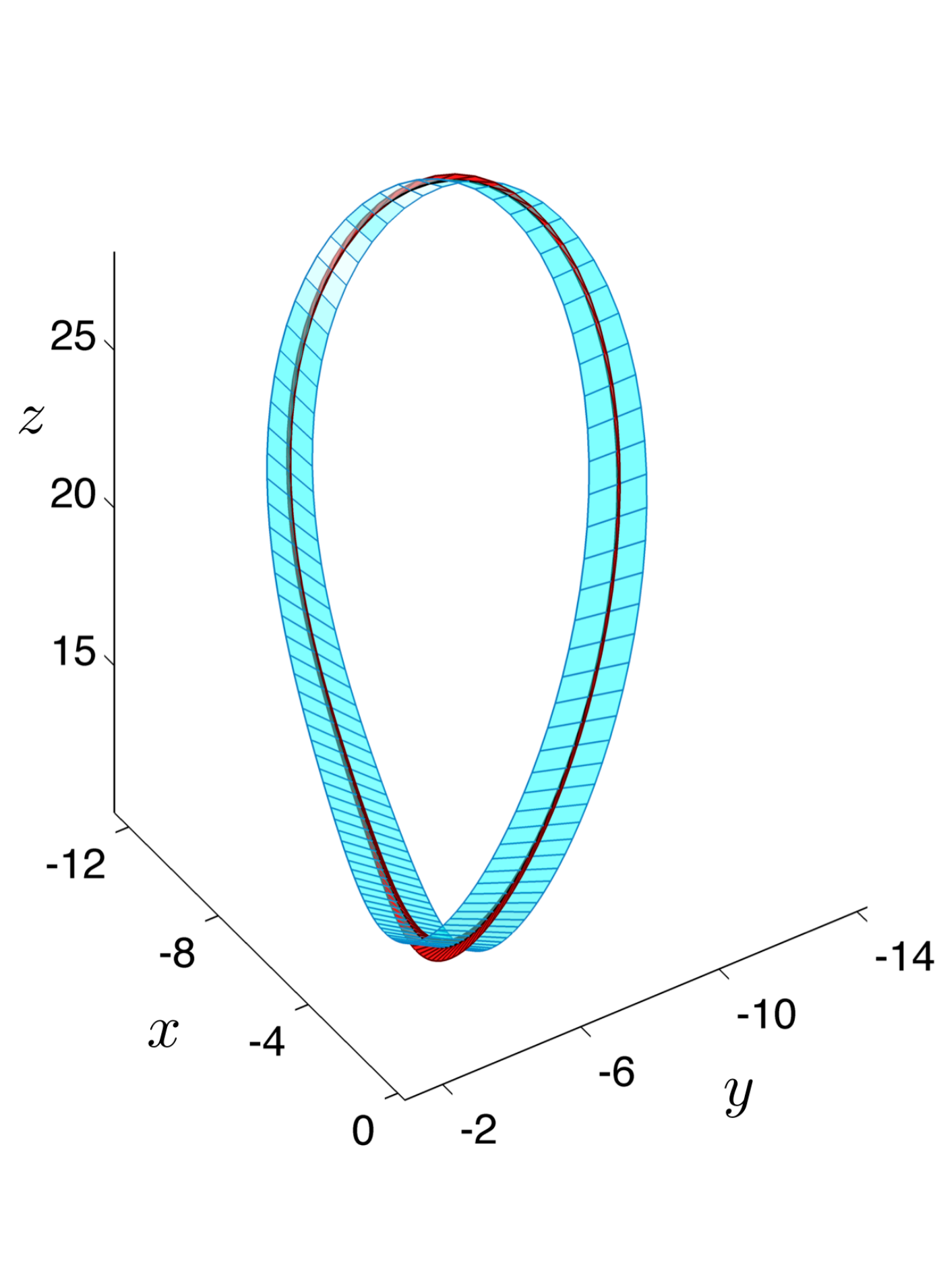}
}
\\\vspace{-35 pt}
\subfigure[]{
\includegraphics[width=0.45\textwidth]{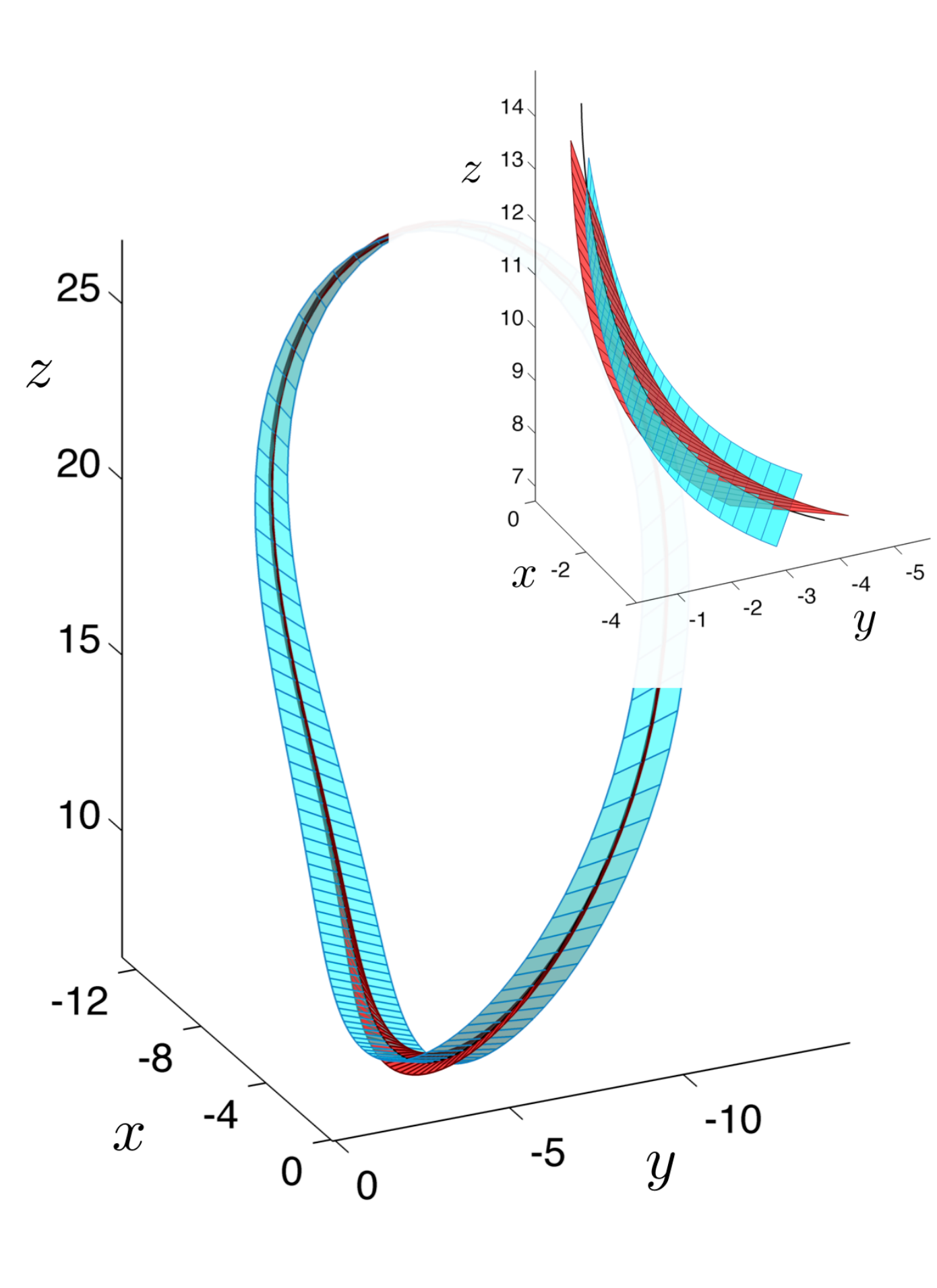}
}
\\\vspace{-35 pt}
\centering\subfigure[]{
\includegraphics[width=0.38\textwidth]{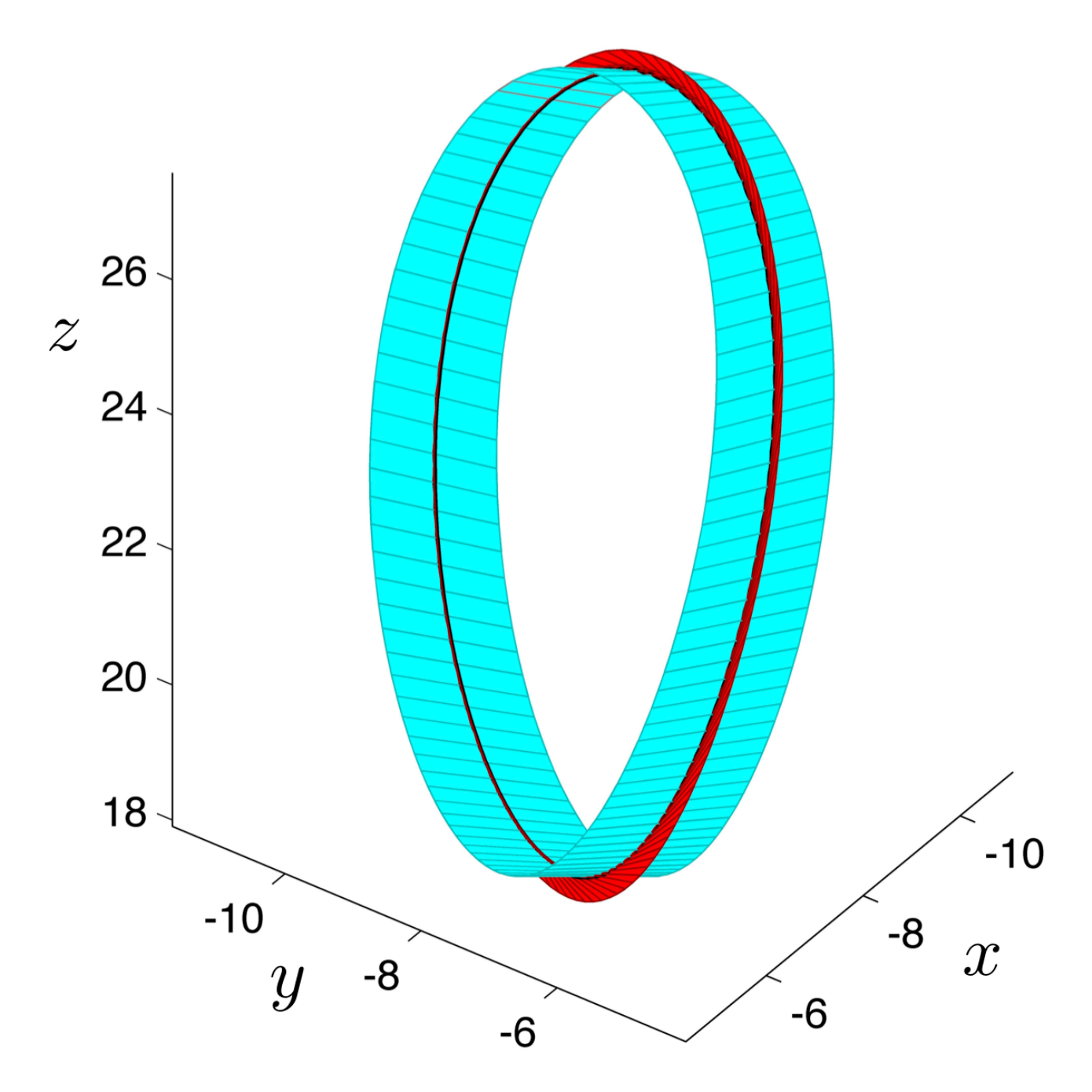}
}\hspace{50 pt}
\subfigure[]{
\includegraphics[width=0.38\textwidth]{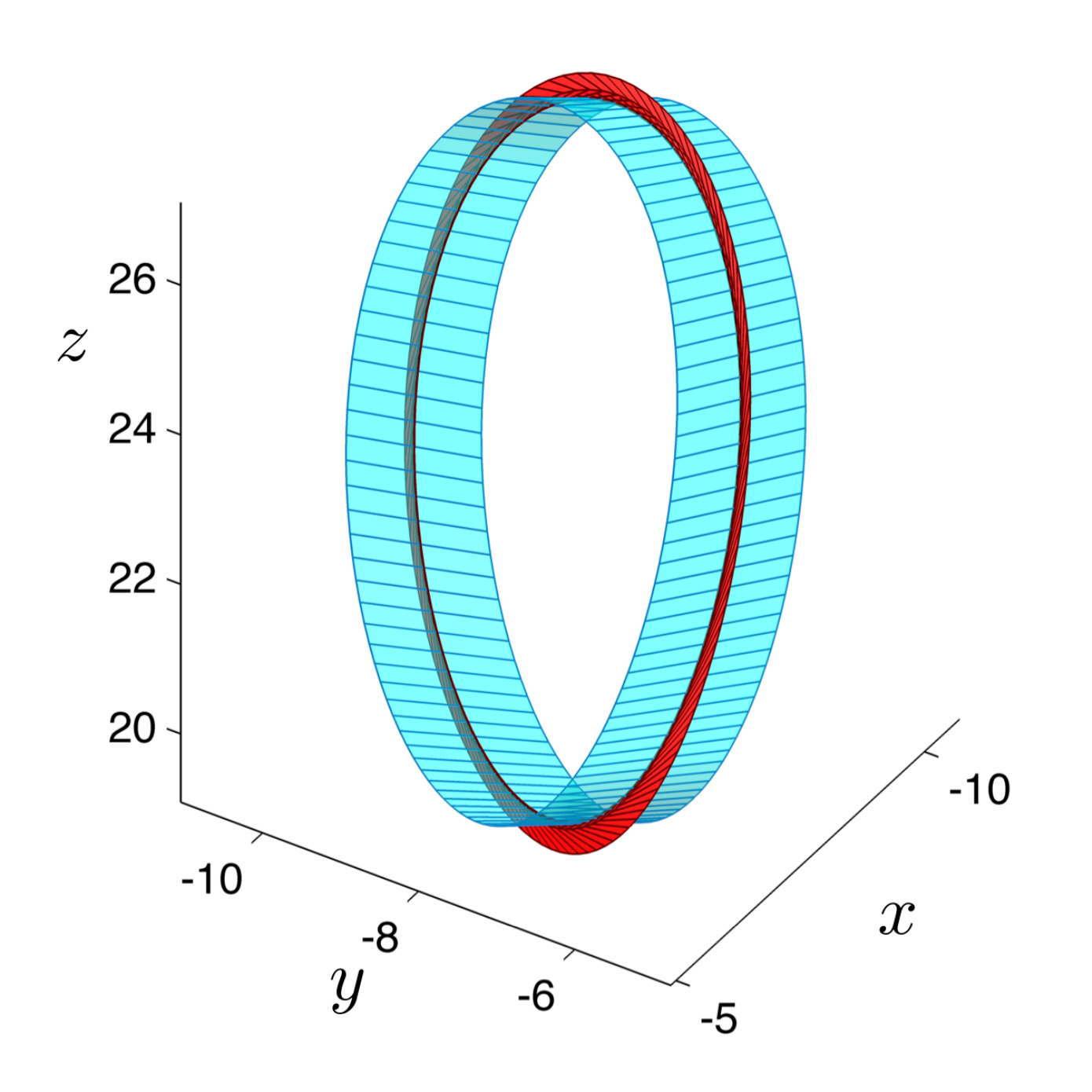}
}
\caption{Plot of the tangent stable (turquoise) and unstable (red) bundles of each of the periodic orbits $\gamma_{i}$. The central figure concerns  Sol$\#$ 2, with a magnification, while figures (a)-(b)-(d)-(e) concern respectively $\gamma_{1}$, $\gamma_{3}, \gamma_{4}, \gamma_{5}$.  }
\label{fig:bundles}
\end{figure}


\subsection{$\zeta^{3}$-model: non orientable tangent bundles} \label{sec:zeta3_bundles}

It is known that if a  Floquet multipliers of a periodic orbit is negative, then the corresponding tangent bundle is not orientable. Moreover,  in the case of a saddle periodic orbit of a three-dimensional system, the two  non-trivial Floquet multipliers are real and their product  is positive. Therefore both the tangent bundles are either orientable or not orientable and, in the latter case, they are topologically equivalent to a M\"obius strip, see \cite{MR1981055}.   

An example of a dynamical system with  periodic orbits that exhibit  this behavior is the so called $\zeta^{3}$-model considered in \cite{MR793709}
\begin{equation}\label{syst:arneodo}
\left\{\begin{array}{l}
\dot x=y\\
\dot y=z\\
\dot z=\alpha x-x^{2}-\beta y-z.
\end{array}
\right.
\end{equation}
For $\beta=2$, as $\alpha$ varies, the periodic orbits of system \eqref{syst:arneodo} produce an interesting bifurcation diagram. We refer to \cite{MR1981055} and \cite{MR1456498} for a detailed analysis of the bifurcation diagram and on the genesis of periodic orbits, called twisted periodic orbit, with non orientable invariant manifolds.  We focus on a particular twisted periodic orbit corresponding to $\alpha=3.372$ lying on the branch  emanating from a period-doubling bifurcation that occurs at  $\alpha\approx 3.125$.

Following the same procedure as before, we rigorously compute the enclosure of the periodic orbit $\gamma(t)$ and subsequently  the enclosure of the  matrix $R$ and of the matrix function $Q(t)$, hence producing an explicit Floquet normal form as in  \eqref{eq:Floquet_normal_form}. Then, we extract the  necessary  stability parameters and we recover the stable and unstable tangent bundles using \eqref{eq:eigs_formulas}.  Figure~\ref{fig:arneodo} shows the resulting  bundles.
  
Having computed  the intervals enclosing the period $\tau$ of the orbit and the  eigenvalues of $R$,  we realize that the absolute values of the  two nontrivial Floquet multipliers satisfy
$$
\begin{array}{l}
|\sigma_{1}|\in[  7.037235782193\cdot 10^{-3}\quad 7.037944324307\cdot 10^{-3} ]\  (=\Delta_{st})\\
|\sigma_{2}|\in[ 1.526609276443494\quad \quad1.528421395487018]\  (=\Delta_{unst})\\
\end{array}
$$

To conclude we emphasize the role played by the continuous function $Q(\theta)$ in the construction of the tangent bundles. As proved in Theorem \ref{theorem:bundle}, as $\theta$ changes, the eigenvector $w_{j}^{\theta}$ of $\Phi_{\theta}(T)$ associated to the Floquet multiplier $\sigma_{j}$ is given by $w_{j}^{\theta}=Q(\theta)v_{j}$, where $v_{j}$ is the eigenvector of  $R$ relative to the eigenvalue $\mu_{j}$. The function $Q(\theta)$ is  continuous and $2\tau$-periodic, but the tangent bundles are  smooth manifolds. That implies that $w_{j}^{\tau}=Q(\tau)v_{j}$ has to be an eigenvector of $\Phi(\tau)$ associated to the Floquet multiplier $\sigma_{j}$, i.e. $span\{v_{j}\}=span\{w_{j}^{\tau}\}$. In the case  of the Lorenz system $Q(\tau)$ turns to be  the identity matrix, therefore the last relation is simply verified. But in case of the $\zeta^{3}$-model and in general when the bundles are not orientable $Q(\tau)$ need not be the identity matrix.   Indeed, in the considered example,   $Q(\tau)$ results to stay  in a small interval around
$$
\bar Q=\left[
\begin{array}{rrr}
-1.663148705259924  &-1.018593776943882  &-0.446703151142258\\
   1.323227706784005   &1.032472501143805   &0.891338521225762\\
   0.936264065136750   &1.438097884773345  &-0.369323795883880\\
 \end{array}\right]\ .
$$
Denoting by $\bar R$, $\bar\tau$, $\bar v_{1}$ the centers of the intervals  the genuine $R$, $\tau$ and $v_{1}$ belong to and defining $\bar\Phi=\tilde Q(\tau)e^{\bar R \bar T}$ the numerical approximation of $\Phi(T)$, we compute 
$$
\bar\Phi\bar Q\bar v=\left[\begin{array}{r}
-0.002642211417990\\
  -0.003294408641461\\
   0.011434535907588\\
   \end{array}\right],\quad\bar Q\bar v=\left[\begin{array}{r}
   0.375442644619642\\
   0.468116019931565\\
  -1.624780050494352\\
    \end{array}\right]
$$
The component-wise ratio between the two computed vectors is $\bar\sigma_{1}=-7.037590044326\cdot 10^{-3}\pm 10^{-13}$, whose absolute value is indeed in the interior of $\Delta_{st}$. If the unstable eigenvector $v_{2}$ is considered, the same operations produce $\bar\sigma_{2}=-1.527515067244305\pm 10^{-13}$. Although not rigorous, these computations confirm the above theoretical discussion  and moreover provide a method to recover the sign of the Floquet multipliers, information that is not possible to achieve following  the presented computational technique.

\begin{figure}
\centering
\subfigure[]{
\includegraphics[scale=0.42]{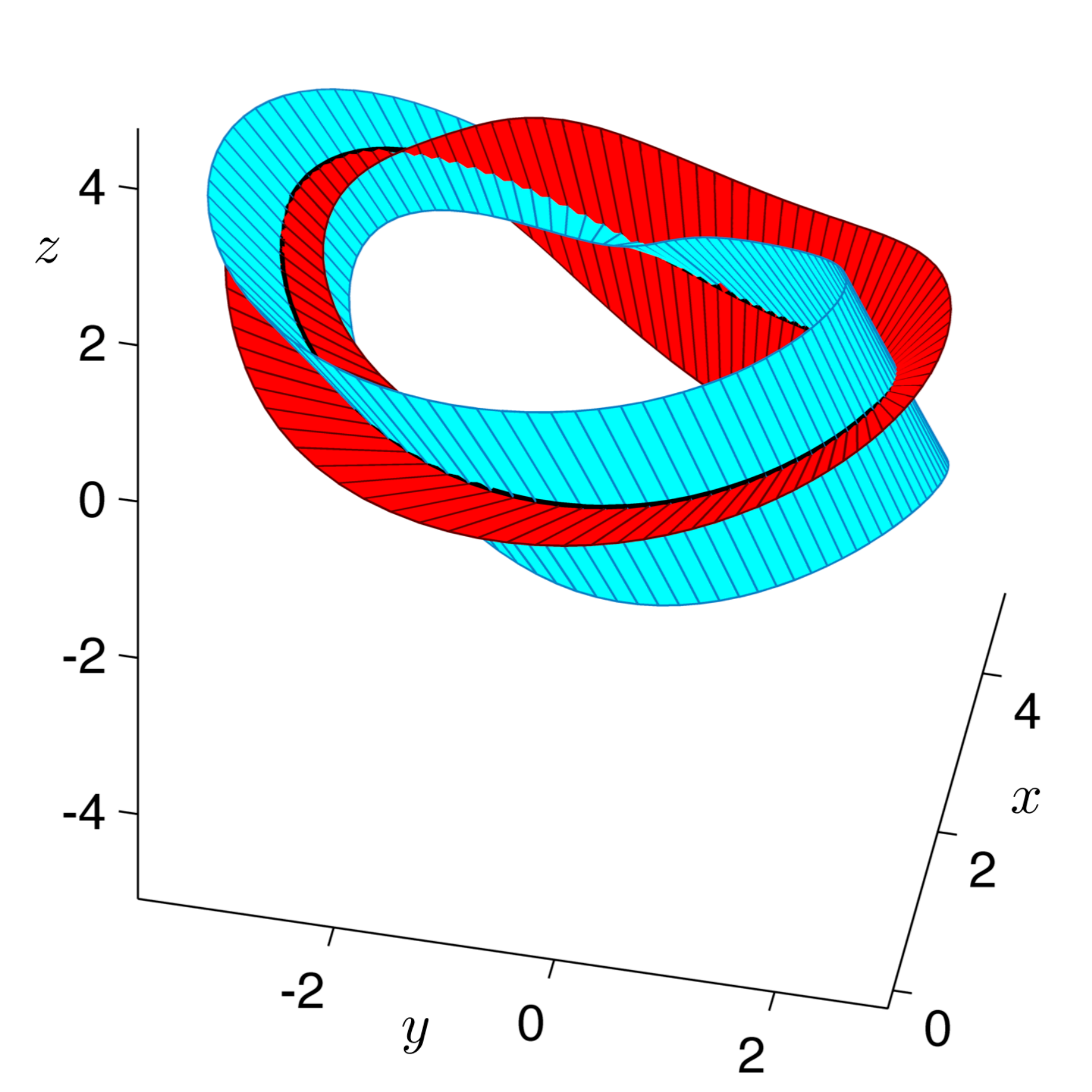}}
\\
\subfigure[]{
\includegraphics[width=0.47\textwidth]{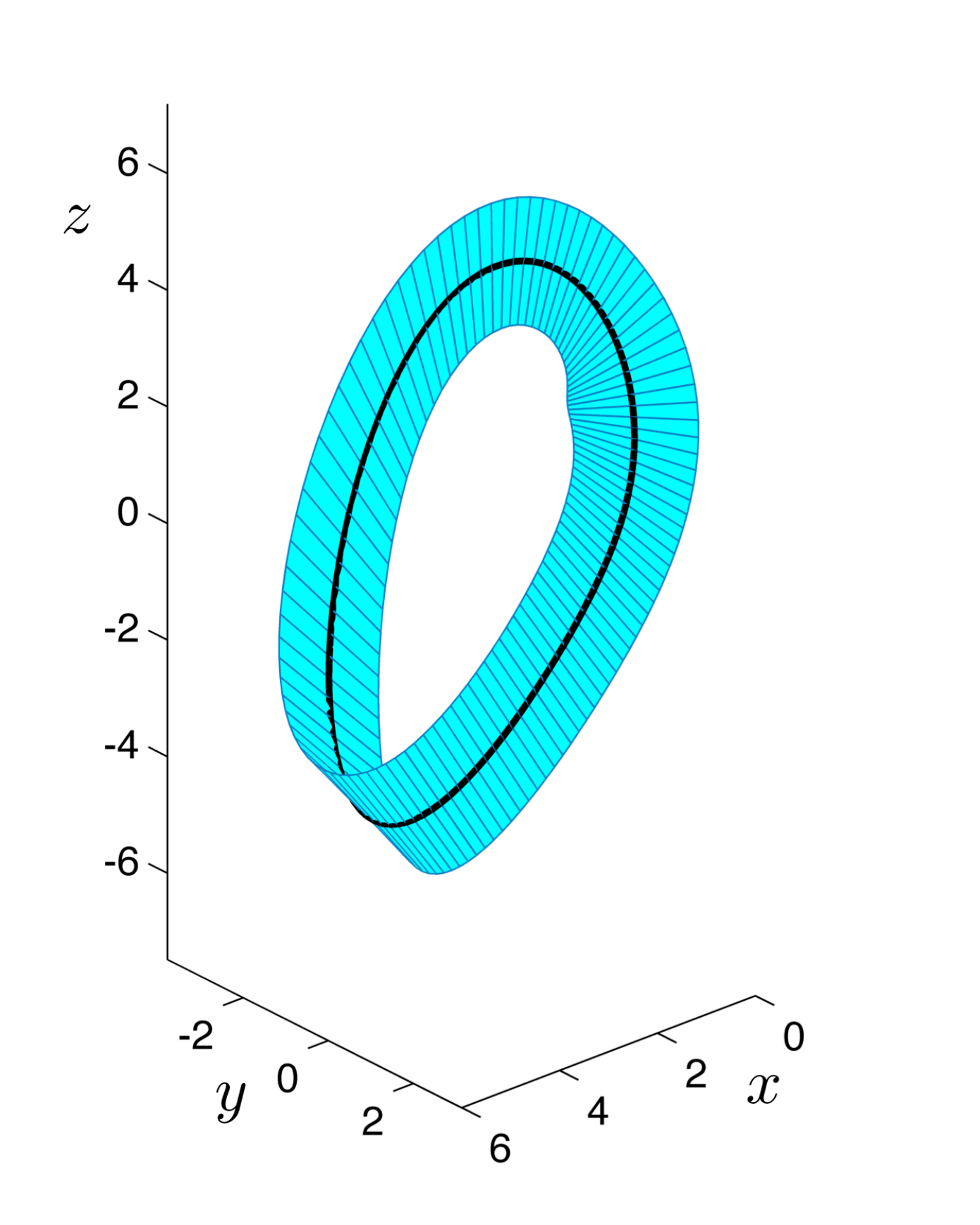}
}
\subfigure[]{
\includegraphics[width=0.47\textwidth]{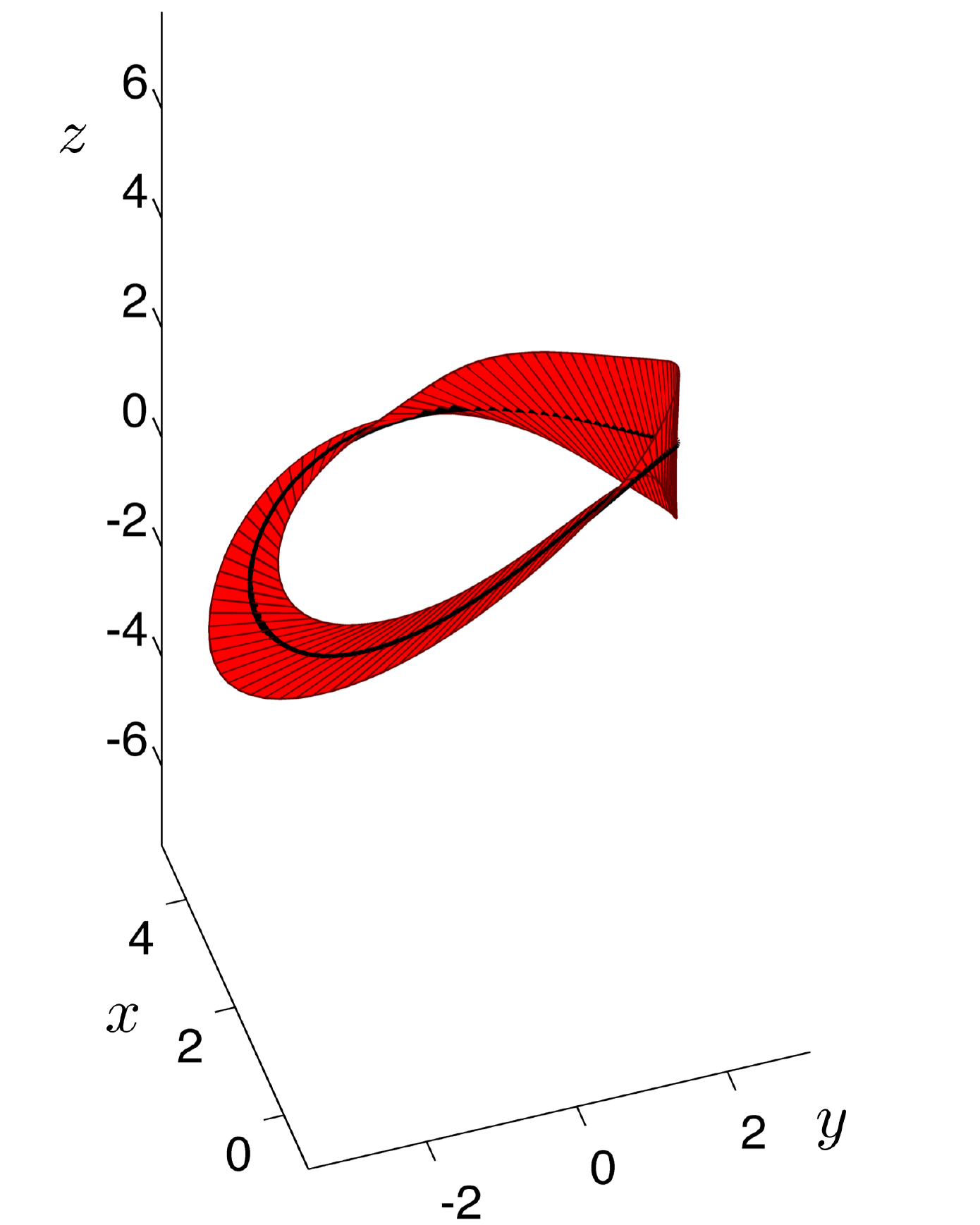}
}
\caption{{\small Stable (turquoise) and unstable (red) tangent bundles of a periodic orbit for the $\zeta^{3}$ model with negative Floquet multipliers}}
\label{fig:arneodo}
\end{figure}

\section{Acknowledgments}
We would like to thank Marcio Gameiro and Jason D. Mireles James for helpful discussions.

\section{Appendix}

Period and Fourier coefficients of $\bar\gamma_{1}$ and $\bar \gamma_{4}$.
\vspace{10pt}
\tiny

Solution $\#$ 1

$\bar \tau_{\gamma}=1.027854840752128$,\quad   $\bar \xi_{0}=\left[\begin{array}{c}  -4.606354666884038\\
  -4.606354666884038\\
  13.533127936581090   \end{array}\right]
$
$$
\begin{array}{r|r|r}
\bar\xi_{1} \hspace{60pt}&\bar \xi_{2}\hspace{60pt}& \bar\xi_{3}\hspace{60pt}\\
\hline
 \noalign{\vskip 2pt}
-2.457589444025310 - 0.734232230617879i &-0.840331595816763 - 0.280556868231764i &-0.244051806246999 - 0.079599377946577i\\
 -2.008759805775985 - 2.236534819812307i &-0.497327754727251 - 1.307931343042055i &-0.098076628971404 - 0.527159479549944i\\
  3.644641521462053 - 2.592383847330301i  &1.373048803380924 - 0.590924707028489i  &0.516674965708581 - 0.103578564294281i\\
  \noalign{\vskip 5pt}
\bar\xi_{4}  \hspace{60pt}&\bar \xi_{5}  \hspace{60pt}& \bar\xi_{6} \hspace{60pt}\\
\hline
 \noalign{\vskip 2pt}
  -0.066251105459624 - 0.023092599715315i &-0.017586386797219 - 0.007027398326009i &-0.004633846567037 - 0.002168050944621i\\
 -0.009785901431526 - 0.185087447977630i  &0.003892543961801 - 0.060779408215756i  &0.003318015096906 - 0.019163826327315i\\
  0.181193087615871 - 0.007305965402112i  &0.059967166034911 + 0.005022757182874i  &0.019005700003600 + 0.003642138482031i\\
   \noalign{\vskip 5pt}
\bar\xi_{7}  \hspace{60pt}&\bar \xi_{8}  \hspace{60pt}& \bar\xi_{9} \hspace{60pt}\\
\hline
 \noalign{\vskip 2pt}
 -0.001215481367863 - 0.000664841070850i &-0.000317263424301 - 0.000201467445482i &-0.000082326458994 - 0.000060305586234i\\
  0.001629398704351 - 0.005865931787398i  &0.000667978649296 - 0.001752989942270i  &0.000249451961957 - 0.000513234481766i\\
  0.005830102115005 + 0.001711097840891i  &0.001743196289261 + 0.000687999808866i  &0.000510298663577 + 0.000254394262682i\\
   \noalign{\vskip 5pt}
\bar\xi_{10}  \hspace{60pt}&\bar \xi_{11}  \hspace{60pt}& \bar\xi_{12} \hspace{60pt}\\
\hline
 \noalign{\vskip 2pt}
 -0.000021217174006 - 0.000017849897868i &-0.000005425393706 - 0.000005230883352i &-0.000001374889998 - 0.000001519316512i\\
  0.000087897665531 - 0.000147548597224i  &0.000029748123926 - 0.000041712327941i  &0.000009770046204 - 0.000011604812976i\\
  0.000146654700161 + 0.000089133299138i  &0.000041443384209 + 0.000030059169428i  &0.000011525197547 + 0.000009848310635i\\
 \noalign{\vskip 5pt}
\bar\xi_{13}  \hspace{60pt}&\bar \xi_{14}  \hspace{60pt}& \bar\xi_{15} \hspace{60pt}\\
\hline
 \noalign{\vskip 2pt}
 -0.000000344798445 - 0.000000437771341i &-0.000000085406182 - 0.000000125225422i &-0.000000020840071 - 0.000000035583174i\\
  0.000003134076053 - 0.000003177810272i  &0.000000986282445 - 0.000000856137980i  &0.000000305435097 - 0.000000226673423i\\
  0.000003154570627 + 0.000003153649381i  &0.000000849430424 + 0.000000991127939i  &0.000000224755146 + 0.000000306618231i\\
\end{array}
$$


Solution $\#$ 4

$\bar \tau_{\gamma}=0.683813590045746$,\quad $\bar \xi_{0}=\left[\begin{array}{c} -7.521252250993276
\\  -7.521252250993276\\
  22.399077327399255\\
  \end{array}\right]
$

$$
\begin{array}{c|c|c}
\bar\xi_{1} &\bar \xi_{2}& \bar\xi_{3}\\
\hline
 \noalign{\vskip 2pt}-1.246453092091490 - 1.262394967959499i &  -0.114587819240451 - 0.194828460289262i & -0.008589210614442 - 0.020708730881492i\\
 -0.117098081743200 + 1.579694736267260i & 0.015748373138453 + 0.188342422139680i & 0.002967722490543 + 0.020459531930793i\\
 -1.138858133842916 - 1.528289275687748i & -0.143528439361466 - 0.149121347835789i & -0.016769839350022 - 0.016159568619679i\\ \noalign{\vskip 5pt}
\bar\xi_{4} &\bar \xi_{5} & \bar\xi_{6}\\
\hline
 \noalign{\vskip 2pt} -0.000619073947482 - 0.001950094567689i & -0.000044052745410 - 0.000171023517657i & -0.000003073584415 - 0.000014085485779i\\
  0.000325236831105 + 0.001967969477674i & 0.000031364652698 + 0.000173341352082i & 0.000002859395743 + 0.000014272552631i\\
 -0.001814442573257 - 0.001764257857272i & -0.000188148978551 - 0.000185385380347i & -0.000018837629098 - 0.000018676999462i\\
   \noalign{\vskip 5pt}
 \bar \xi_{7} &\bar \xi_{8}&\bar \xi_{9}\\
\hline
 \noalign{\vskip 2pt}
 -0.000000208407126 - 0.000001089211660i & -0.000000013587818 - 0.000000078416175i & -0.000000000836854 - 0.000000005128148i\\
  0.000000251244910 + 0.000001104010427i & 0.000000021464593 + 0.000000079624828i & 0.000000001792309 + 0.000000005227527i\\
 -0.000001824392597 - 0.000001813988748i & -0.000000171368842 - 0.000000170686207i & -0.000000015658534 - 0.000000015616179i\\
  \noalign{\vskip 5pt}
  \bar\xi_{10} & \bar\xi_{11} & \bar\xi_{12}\\
\hline
 \noalign{\vskip 2pt}
 -0.000000000046957 - 0.000000000284715i & -0.000000000002182 - 0.000000000010252i & -0.000000000000052 + 0.000000000000357i\\
  0.000000000146750 + 0.000000000292804i & 0.000000000011806 + 0.000000000010900i  & 0.000000000000935 - 0.000000000000305i\\
 -0.000000001395354 - 0.000000001392990i & -0.000000000121501 - 0.000000000121396i & -0.000000000010352 - 0.000000000010351i\\
  \noalign{\vskip 5pt}
  \bar\xi_{13} & \bar\xi_{14} &\bar \xi_{15}\\
\hline
 \noalign{\vskip 2pt} 
 0.000000000000006 + 0.000000000000135i & 0.000000000000002 + 0.000000000000021i & 0.000000000000000 + 0.000000000000003i\\
  0.000000000000073 - 0.000000000000131i & 0.000000000000006 - 0.000000000000020i  & 0.000000000000000 - 0.000000000000003i\\
 -0.000000000000863 - 0.000000000000865i & -0.000000000000070 - 0.000000000000071i & -0.000000000000005 - 0.000000000000006i\\
\end{array}
$$

\normalsize
\bigskip
\noindent Numerical approximation $\bar R$ and even Fourier coefficients $\bar Q_{k}$.

\vspace{10 pt}
\tiny
Solution $\#$ 1
$$
\bar R=\left[
\begin{array}{rrr}
-10.508958375451483   &6.244108010218356  &-7.445538972862637\\
   1.367770562612481   &5.059391467543374 &-10.140640221489871\\
  -6.918853545877750   &5.863201994753524  &-8.217099758758689\\
\end{array}\right]
$$
$$\bar Q_{0}=\left[\begin{array}{rrr} 
1.411735844583484  &-0.999238898309471   &1.303728854375973\\
  -1.110555333911319  &0.141112697583195   &0.194620640182095\\
   0.843423174876108   &0.200767676367087  &-0.730409462001914\\
 \end{array}\right]
 $$ 
  $$
\bar Q_{2}=
$$
\vspace{-8pt}
$$
\left[\begin{array}{rrr}
-0.263681129667594 - 0.434148218933761i  &0.245502648963791 + 0.051118262858919i &-0.264146740709986 - 0.219962925571520i\\
  0.495149611356963 + 0.020091337974708i  &0.018943994759567 - 0.173000962596422i  &0.034736062134259 + 0.070721806450736i\\
 -0.125150133979381 + 0.155543962133145i  &0.072778675242146 + 0.276759663116221i  &0.234684069057806 - 0.137872983068902i\\
   \end{array}\right]
 $$
 $$
\bar Q_{4}= 
$$
\vspace{-8pt}
$$
\left[\begin{array}{rrr} 
  0.003223434126082 + 0.072191532567703i  &0.154600837185400 - 0.083425433638396i &-0.224496433432301 - 0.079860905176751i\\
  0.088047497733925 + 0.044110570228044i  &0.205199858834844 + 0.061631720475212i &-0.101564118363285 - 0.257855508650553i\\
 -0.138910366811394 + 0.089747331219474i &-0.038756214412790 + 0.227619738561727i  &0.277112301560736 - 0.128593935735273i\\
   \end{array}\right]
   $$
    $$
\bar Q_{6}= 
$$
\vspace{-8pt}
$$
\left[\begin{array}{rrr} 
 0.032229467843581 + 0.043355074683078i  &0.063509812104330 - 0.036923452979636i &-0.104043649831591 - 0.038640713988921i\\
 -0.000475098772244 + 0.078582831969487i  &0.122476382683268 + 0.065280023891177i &-0.036087360927645 - 0.192529574267658i\\
 -0.085890896774482 + 0.011537815519094i &-0.060779388764130 + 0.118929071081753i  &0.189091902999670 - 0.038256089963182i\\
 \end{array}\right]
   $$
    $$
\bar Q_{8}= 
$$
\vspace{-8pt}
$$
\left[\begin{array}{rrr} 
 0.014837677547204 + 0.016791777242045i  &0.023655160613775 - 0.012807406159434i &-0.039506811034992 - 0.015496956153149i\\
 -0.011244165701100 + 0.044396287876498i  &0.053040611505731 + 0.039952372292969i &-0.004186361886575 - 0.098878456196623i\\
 -0.042670724306854 - 0.009276118631273i &-0.039817529715657 + 0.050937744964250i &0.096680260325811 - 0.002950867660488i\\
 \end{array}\right]
   $$
    $$
\bar Q_{10}= 
$$
\vspace{-8pt}
$$
\left[\begin{array}{rrr} 
  0.005198646646053 + 0.006059114013686i  &0.008265722985980 - 0.003957671147668i &-0.013457029500778 - 0.005897110459635i\\
 -0.008342719368296 + 0.019119854569593i  &0.019900084741206 + 0.019629584406457i  &0.003372357081277 - 0.042680213775582i\\
 -0.018320486491551 - 0.008387560951806i &-0.019826346511011 + 0.019224271692316i  &0.042021880260296 + 0.004167728643705i\\
 \end{array}\right]
   $$
    $$
\bar Q_{12}= 
$$
\vspace{-8pt}
$$
\left[\begin{array}{rrr} 
  0.001642111476789 + 0.002113215989109i  &0.002770838660308 - 0.001130105417024i &-0.004317304940509 - 0.002176073388202i\\
 -0.004468250361663 + 0.007266315842478i  &0.006811683852990 + 0.008504503427600i  &0.003179105256374 - 0.016650157014229i\\
 -0.007054400616465 - 0.004581594079138i &-0.008590105865316 + 0.006619388137483i  &0.016483705220024 + 0.003472785816957i\\
  \end{array}\right]
   $$
    $$
\bar Q_{14}= 
$$
\vspace{-8pt}
$$
\left[\begin{array}{rrr} 
 0.000489747959995 + 0.000714963537264i  &0.000900858813617 - 0.000302058076912i &-0.001332830306692 - 0.000777133607668i\\
 -0.002038899547182 + 0.002555634111674i  &0.002173495066311 + 0.003390709655169i  &0.001809581469994 - 0.006067809090768i\\
 -0.002503668907675 - 0.002080001556342i &-0.003415963753540 + 0.002118659036427i  &0.006023316397811 + 0.001899418469947i\\
\end{array}\right]
   $$

Solution $\#$ 4

$$
\bar R=\left[
\begin{array}{rrr}
-10.103827000749006 &5.011512150268070  &-4.181592133228406\\
  2.108771563239242 &-0.623931925418962  &0.486619976897008\\
  -6.292527840128125 &3.486887629270139  &-2.938907740498710\\
\end{array}\right]
$$
$$\bar Q_{0}=\left[\begin{array}{rrr} 
   0.865350358013670  &-0.542407880588934   &0.461699549706412\\
  -0.413891831683466   &0.086095506914414  &-0.062238564323011\\
   0.495177534990071  &-0.049624165086980   &0.025622831143080\\
\end{array}\right]
$$

 $$
\bar Q_{2}=
$$
\vspace{-8pt}
$$
\left[\begin{array}{rrr} 
  0.043888342026888 - 0.018225551633217i  &0.237009816130334 - 0.161848354458699i &-0.187509342015513 - 0.182145921801054i\\
  0.184261504574051 + 0.121001510966144i  &0.327547824090844 - 0.006620885935044i  &0.028142805119932 - 0.301161573224061i\\
 -0.196713672633492 + 0.151158740280049i  &0.033125259013010 + 0.469308310958510i  &0.383336675401313 - 0.015987641512644i\\
  \end{array}\right]
 $$
 $$
\bar Q_{4}= 
$$
\vspace{-8pt}
$$
\left[\begin{array}{rrr} 
  0.020354746043569 + 0.006072390169969i & 0.031468524011727 - 0.044530642983650i &-0.037967521528517 - 0.020350032723162i\\
  0.019156656540045 + 0.040757474867593i  &0.108300555430671 + 0.012024500455431i  &0.003607621081908 - 0.088848037842172i\\
 -0.042700076948069 + 0.023165829211282i &-0.006988160820195 + 0.106913456324494i & 0.087287750910915 + 0.001272272562888i\\
    \end{array}\right]
    $$
$$
\bar Q_{6}= 
$$
\vspace{-8pt}
$$
\left[\begin{array}{rrr} 
  0.002761640132931 + 0.000187559216871i &  0.002550968100164 - 0.005879343921945i &-0.004808213884619 - 0.001470031764531i\\
  0.003017836182538 + 0.007295810537329i & 0.018305847192402 + 0.001100730065600i &-0.000373601315687 - 0.014656539554954i\\
 -0.007076735073531 + 0.002990041055165i & -0.001327830255369 + 0.017843392559588i & 0.014369458507765 - 0.000147876219115i\\
    \end{array}\right]
 $$
 
 $$
\bar Q_{8}= $$
\vspace{-8pt}
$$
\left[\begin{array}{rrr} 
  0.000289533560511 - 0.000023944541333i & 0.000167141543813 - 0.000637470930420i &-0.000509911668009 - 0.000073299205025i\\
  0.000441108794910 + 0.000976089854840i & 0.002466959960129 - 0.000020947005419i &-0.000203588657299 - 0.001946883870274i\\
 -0.000969488479052 + 0.000417372380704i &-0.000027259876302 + 0.002448108770303i & 0.001937426651095 - 0.000166408614554i\\
    \end{array}\right]
 $$
 $$
\bar Q_{10}= 1.1e-03 *
$$
\vspace{-8pt}
$$
\left[\begin{array}{rrr} 
  0.027798782570345 - 0.005964538385953i  &0.007649736579992 - 0.063378060920909i &-0.049748597822010 - 0.000258862545890i\\
  0.060031396019271 + 0.114573660552919i & 0.294930056067831 - 0.027358497808776i &-0.045190737024386 - 0.229328952532320i\\
 -0.115071668553051 + 0.057995199816582i  &0.023085665062373 + 0.294833391391544i & 0.229589287166289 - 0.041943841171559i\\
    \end{array}\right] 
 $$
 $$
\bar Q_{12}=    1.1e-04 *
$$
\vspace{-8pt}
$$
\left[\begin{array}{rrr} 
  0.025222741873242 - 0.008833955070603i &-0.000684171241317 - 0.059552377024599i &-0.045934587657163 + 0.005873935636186i\\
  0.077200510619449 + 0.122891998519810i & 0.324298385293782 - 0.061987569005484i &-0.075398660882460 - 0.248296827479228i\\
 -0.123626123118424 + 0.075719660830317i & 0.058531951875591 + 0.324753469250377i & 0.248928813781281 - 0.072775716802859i\\
  \end{array}\right]  
  $$
   $$
\bar Q_{14}=    1.1e-05 *
$$
\vspace{-8pt}
$$
\left[\begin{array}{rrr} 
  0.021844676611758 - 0.010885662915616i &-0.007613608020911 - 0.053546469056524i &-0.040603455774486 + 0.010691275157818i\\
  0.093733134787201 + 0.122596846519192i & 0.334057192499540 - 0.100903036175184i &-0.106940875623844 - 0.251662436543305i\\
 -0.123324223393918 + 0.092568245260174i & 0.097987409026114 + 0.334717932778523i & 0.252425413480351 - 0.104748352612175i\\
   \end{array}\right]  
  $$
\normalsize

\bigskip

\noindent Enclosure of the spectrum and eigenvectors of $R$.
\tiny
\\

\noindent
Solution $\#$ 1
$$
\begin{array}{r|r|r|r}
& {\it Stable} & & {\it Unstable}\\
\hline \noalign{\vskip 2pt}
{\it E.values}&-14.295385513026014&  -0.000000000000342  & 0.628718846359581\\
 \noalign{\vskip 2pt}
 & -1.304013849063401 &  0.330244752093107  & 0.376869068070140\\
{\it E.vectors}  &-0.455394137737842  & 1.503215970221508   &1.529903371837170\\
  &-1.045066534133045  & 0.794531403146519  & 0.719281153912157\\
 \noalign{\vskip 2pt} {\it Rad}\  10^{-3}\cdot&0.068012486147407  & 0.933273952985148 &  0.951085304085387 \\
  \end{array}
  $$
  Solution $\#$ 2
$$
\begin{array}{r|r|r|r}
& {\it Stable} & & {\it Unstable}\\
\hline \noalign{\vskip 2pt}
{\it E.values}&-14.217489884945432  &-0.000000000000372&   0.550823218279069\\
 \noalign{\vskip 2pt}
  &-1.311412833044274  & 0.313018700843905  & 0.362984001981858\\
{\it E.vectors} & -0.418001967490342  & 1.491825566190409  &1.522008146749871\\
  &-1.051413684760184 &  0.822481472729119  &0.742787867114333\\
  \noalign{\vskip 2pt} {\it Rad}\  10^{-3}\cdot& 0.028142829394895  & 0.366070961579701  & 0.3733964559921\\
 \end{array}
$$
Solution $\#$ 3
$$
\begin{array}{r|r|r|r}
& {\it Stable} & & {\it Unstable}\\
\hline \noalign{\vskip 2pt}
{\it E.values} & -13.962049368058929 & -0.000000000000126  & 0.295382701392358\\
 \noalign{\vskip 2pt}  & 1.347327907101522 &  0.210153254038267 & -0.271285496065970\\
{\it E.vectors} &   0.192884294913700  & 1.398977370916435 &  -1.447969583732432\\
 &  1.071215739018561 &  0.999348750677595 & -0.910927145390874\\
 \noalign{\vskip 2pt} {\it Rad}\  10^{-4}\cdot&0.027747858117877 &  0.357120423613300  & 0.365316866760002\\
 \end{array}
$$
Solution $\#$ 4
$$
\begin{array}{r|r|r|r}
& {\it Stable} & & {\it Unstable}\\
\hline \noalign{\vskip 2pt}
{\it E.values} &
-13.721015009104903  &-0.000000000000309 &  0.054348342438550\\
  \noalign{\vskip 2pt}  &  1.439298428490600   &0.051023279023768 & -0.128543563503133\\
{\it E.vectors} &  -0.266150110668696  & 1.168676456309931 & -1.252767532277167\\
&  0.926058395748094  & 1.277337843119246 & -1.189138369725781\\
 \noalign{\vskip 2pt} {\it Rad}\  10^{-4}\cdot &0.025442623394767  & 0.486579977382052  & 0.524845634189975\\
  \end{array}
$$
Solution $\#$ 5
$$
\begin{array}{r|r|r|r}
& {\it Stable} & & {\it Unstable}\\
\hline \noalign{\vskip 2pt}
{\it E.values} &
-13.701329239339196 & -0.000000000000487 &  0.034662572673008\\
   \noalign{\vskip 2pt}  &   1.451175793211715 &  0.020133220407690  &-0.100604795866982\\
{\it E.vectors} &  -0.333782791906082 &  1.115780576244332  &-1.206723375086350\\
 &  0.884690830190827 &  1.324623855708436  &-1.238425359506485\\
 \noalign{\vskip 2pt} {\it Rad}\  10^{-3}\cdot &0.009720262854618 &  0.336965856255784  & 0.333681919907764\\
  \end{array}
$$

\normalsize

\bibliographystyle{unsrt}
\bibliography{papers}

\end{document}